\begin{document}
	\numberwithin{equation}{section}
	\newtheorem{Def}{Definition}[section]
	\newtheorem{Rem}[Def]{Remark}
	\newtheorem{Lem}[Def]{Lemma}
	\newtheorem{Que}[Def]{Question}
	\newtheorem{Cor}[Def]{Corollary}
	\newtheorem{Exam}[Def]{Example}
	\newtheorem{Thm}[Def]{Theorem}
	\newtheorem*{clm}{Claim}
	\newtheorem{Pro}[Def]{Proposition}
	\newcommand{\ch}{\mathrm{char}~}
	\newcommand{\spec}{\mathrm{Spec}~}
	\renewcommand{\mod}{\mathrm{mod}~}
	\renewcommand{\O}{\mathcal{O}}
	\renewcommand{\Im}{\mathrm{Im~}}
	\newcommand{\Ar}{\mathrm{Ar}}
	\newcommand{\Z}{\mathds{Z}}
	\newcommand{\Q}{\mathds{Q}}
	\newcommand{\C}{\mathds{C}}
	\newcommand{\R}{\mathds{R}}
	\newcommand{\N}{\mathds{N}}
	\newcommand{\Der}{\mathrm{Der}}
	\newcommand{\res}{\mathrm{res}}
	\newcommand{\Hom}{\mathrm{Hom}}
	\newcommand{\Pic}{\mathrm{Pic}}
	\newcommand{\Ext}{\mathrm{Ext}}
	\newcommand{\Jac}{\mathrm{Jac}}
	\newcommand{\id}{\mathrm{id}}
	\newcommand{\Gr}{\mathrm{Gr}}
	\newcommand{\gr}{\mathrm{gr}}
	\newcommand{\tr}{\mathrm{tr}}
	\newcommand{\HdR}{\mathscr{H}_{dR}}
	\newcommand{\Bild}{\mathrm{Bild}}
	\newcommand{\rk}{\mathrm{rk}~}
	\newcommand\gf[2]{\genfrac{}{}{0pt}{}{#1}{#2}}
\title{New Explicit Formulas for Faltings' Delta-Invariant}
\author{Robert Wilms}
\begin{abstract}
In this paper we give new explicit formulas for Faltings' $\delta$-invariant in terms of integrals of theta functions, and we deduce an explicit lower bound for $\delta$ only in terms of the genus and an explicit upper bound for the Arakelov--Green function in terms of $\delta$. Furthermore, we give a canonical extension of $\delta$ and the Zhang--Kawazumi invariant $\varphi$ to the moduli space of indecomposable principally polarised complex abelian varieties.
\end{abstract}
\maketitle
\section{Introduction}
\label{intro}
Faltings' $\delta$-invariant of compact and connected Riemann surfaces plays a crucial role in Arakelov theory of arithmetic surfaces. For example, $\delta$ appears in the arithmetic Noether formula proven by Faltings \cite{Fal84}.
In this paper we prove new explicit formulas for this invariant and we deduce several applications.
\subsection{Results}
Let $X$ be any compact and connected Riemann surface of genus $g\ge 1$.
We denote by $\|\theta\|$ the real-valued function on $\Pic_{g-1}(X)$ introduced by Faltings
in \cite[p. 401]{Fal84}, we recall the definition in Section \ref{secinvariantsabelian}. Further, we use the canonical identification $\Pic_{0}(X)\cong\Pic_{g-1}(X)$ by the vector of Riemann constants. Let $\nu$ be the Kähler form of the Hodge metric on $\Pic_{0}(X)$.
Our main result, stated in the following theorem, gives a relation
between Faltings' $\delta$-invariant and the Zhang--Kawazumi invariant $\varphi$ defined in \cite[Theorem 1.3.1]{Zha10} and \cite[Section 1]{Kaw08}, see also Section \ref{invariant} for the definitions of these invariants.
\begin{Thm}\label{mainresult}
Any compact and connected Riemann surface $X$ of genus $g\ge 1$ satisfies
$$\delta(X)=-24\int_{\Pic_{g-1}(X)}\log\|\theta\|\tfrac{\nu^{g}}{g!}+2\varphi(X)-8g\log2\pi.$$
\end{Thm}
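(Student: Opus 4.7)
The plan is to derive the identity by pulling a pointwise comparison formula for $\|\theta\|$ back to $X^g$ via the Abel--Jacobi map and integrating explicitly.

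First I would recall a pointwise identity expressing $-\log\|\theta\|(\sum_i P_i - Q)$, for generic distinct points $P_1,\ldots,P_g,Q\in X$, as an explicit combination of Arakelov Green's function values $G(P_i,Q)$ and $G(P_i,P_j)$, a fixed multiple of $\delta(X)$, and a universal constant. Such a formula is essentially the content of Faltings' original comparison in \cite{Fal84} between the norm arising from the theta series and the Arakelov metric on $\O_{\Pic_{g-1}(X)}(\Theta)$, and versions of it appear in later work of de Jong and others.

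Next I would fix $Q\in X$ and exploit the Abel--Jacobi map $\alpha_Q:X^g\to\Pic_{g-1}(X)$ sending $(P_1,\ldots,P_g)\mapsto[\sum_i P_i-Q]$. This map is surjective of degree $g!$, and because $\log\|\theta\|$ is translation invariant under $\Pic_0$, the auxiliary dependence on $Q$ will eventually drop out. Pulling the pointwise identity back via $\alpha_Q$ and integrating against $\alpha_Q^*(\nu^g/g!)$ rewrites $\int_{\Pic_{g-1}(X)}\log\|\theta\|\,\nu^g/g!$ as a sum of integrals of $G$-terms on $X^g$ against this Hodge-type measure, plus a constant multiple of $\delta(X)$ and an explicit numerical constant.

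The key step, and the main obstacle, is to identify the resulting $G$-integrals with $2\varphi(X)$. If $\alpha_Q^*\nu^g$ were simply $g!\cdot\mu_{\Ar}^{\otimes g}$, these integrals would all vanish by the defining property of the Arakelov Green's function. Instead, $\alpha_Q^*\nu^g$ admits an explicit expansion in wedge products of an orthonormal basis of holomorphic differentials for the Hodge inner product, and its discrepancy from $g!\cdot\mu_{\Ar}^{\otimes g}$ is essentially what the Zhang--Kawazumi invariant $\varphi$ measures. The technical core of the argument is to organize this expansion so that the cross terms, paired against the $G(P_i,P_j)$ and $G(P_i,Q)$ kernels, collapse into exactly $2\varphi(X)$ in the sense of \cite{Kaw08,Zha10}.

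The constant $-8g\log 2\pi$ finally comes from normalization bookkeeping: the factors of $2\pi$ and $\det\Im\tau$ in Faltings' definition of $\|\theta\|$ relative to the classical Riemann theta series, together with the comparison between the flat Euclidean measure on $\C^g$ and $\nu^g/g!$ on the Jacobian, produce the stated coefficient.
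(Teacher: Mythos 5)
There is a genuine gap at the very first step: the ``pointwise comparison formula'' you want to start from does not exist for a general Riemann surface. Faltings' defining equation expresses $\log\|\theta\|(P_1+\dots+P_g-Q)$ in terms of $-\tfrac18\delta(X)$, the Green terms $-\sum_{j<k}g(P_j,P_k)+\sum_j g(P_j,Q)$, \emph{and} the non-constant term $\log\|\det(\psi_j(P_k))\|_{\Ar}$ (Guàrdia's variant replaces this by $\log\|J\|(P_1,\dots,P_g)$). A decomposition of $\log\|\theta\|$ into Green's functions plus a fixed multiple of $\delta$ plus a genuine constant is available only in the hyperelliptic case, where it is exactly Proposition \ref{equhypfactor} of the paper, and its proof uses the hyperelliptic involution in an essential way (the zeros of $\|\theta\|(P_1+\dots+P_g-Q)$ in $P_1$ are located at $Q$ and at $\sigma(P_k)$ because $P+\sigma(P)\sim 2\infty$); no analogue holds for general $X$, and indeed the paper points out in Section \ref{secgeneral} that the hyperelliptic identities such as (\ref{bostgeneral}) fail off the hyperelliptic locus.

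Concretely, if you integrate Faltings' equation against $\tfrac{1}{(g!)^2}\Phi^*\nu^g$, the Green-function part does work out as you predict: by the paper's Lemma \ref{lemgsigmaint} each cross term contributes $\tfrac{1}{2g(g-1)}\varphi(X)$, so $\sum_{j<k}$ gives $\tfrac14\varphi(X)$ and the $g(P_j,Q)$ terms vanish. But what remains is $H(X)=-\tfrac18\delta(X)-\tfrac14\varphi(X)+\tfrac{1}{(g!)^2}\int_{X^g}\log\|\det(\psi_j(P_k))\|_{\Ar}\,\Phi^*\nu^g$, and the theorem you are trying to prove is equivalent to the identity $\tfrac{1}{(g!)^2}\int_{X^g}\log\|\det(\psi_j(P_k))\|_{\Ar}\,\Phi^*\nu^g=\tfrac{1}{12}\delta(X)+\tfrac13\varphi(X)-\tfrac{g}{3}\log 2\pi$. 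Your proposal silently absorbs this determinant integral into ``a universal constant'' and ``normalization bookkeeping,'' so the entire difficulty is untouched rather than resolved. This is precisely why the paper cannot argue fibrewise: it first proves the formula for hyperelliptic surfaces (where the pointwise decomposition is legitimate, and even then the argument needs the comparison of the two pullbacks $\Phi$ and $\Psi$, de Jong's formula for $\delta$ of hyperelliptic curves, and Guàrdia's and de Jong's results on $\|J\|$ and $\|\Delta_g\|$), and then handles general $X$ by a global moduli argument, showing that $\delta+24H-2\varphi$ is pluriharmonic on $\mathcal{M}_g$ via $\partial\overline{\partial}$-computations with Deligne pairings and a graph-counting lemma, and hence constant. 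Some replacement for that global (or degeneration) argument, or a genuinely new evaluation of the determinant integral, is indispensable; without it the proposed proof does not go through.
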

Note, that $\varphi(X)=0$ if $g=1$.
Next, we state some applications of the theorem. The first application is an explicit lower bound for $\delta(X)$ depending only on $g$.
Until now, such bounds were only known for $g\le2$.
In the case $g=1$ the explicit lower bound $\delta>-9$ follows from Faltings' calculation in \cite[Section 7]{Fal84}, see also \cite[p. 92]{vKa14b}.
For $g=2$ von K\"anel proved in \cite[Proposition 5.1]{vKa14} that $\delta(X)\ge-186$.
We shall deduce from Theorem \ref{mainresult} the following corollary.
\begin{Cor}\label{deltalowerbound}
Any compact and connected Riemann surface $X$ of genus $g\ge 1$ satisfies $\delta(X)> -2g\log 2\pi^{4}$.
\end{Cor}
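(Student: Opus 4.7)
My plan is to combine Theorem \ref{mainresult} with two further inputs: the non-negativity of the Zhang--Kawazumi invariant $\varphi$, and an explicit upper bound on the integral $\int_{\Pic_{g-1}(X)}\log\|\theta\|\,\nu^{g}/g!$. First, I would note that $\varphi(X)\geq 0$, which is immediate from the definition in \cite{Zha10,Kaw08} as an integral of a pointwise non-negative quantity (with $\varphi(X)=0$ when $g=1$). Since $-2g\log(2\pi^{4})=-8g\log 2\pi+6g\log 2$, Theorem \ref{mainresult} reduces the claim to the inequality
$$-24\int_{\Pic_{g-1}(X)}\log\|\theta\|\,\tfrac{\nu^{g}}{g!}\;>\;6g\log 2.$$

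Next, since $\nu^{g}/g!$ is a probability measure on $\Pic_{g-1}(X)$, Jensen's inequality gives
$$\int\log\|\theta\|^{2}\,\tfrac{\nu^{g}}{g!}\;\leq\;\log\int\|\theta\|^{2}\,\tfrac{\nu^{g}}{g!},$$
with strict inequality because $\|\theta\|^{2}$ is not almost-everywhere constant (it vanishes on the theta divisor). This reduces the task to the explicit evaluation
$$\int_{\Pic_{g-1}(X)}\|\theta\|^{2}\,\tfrac{\nu^{g}}{g!}\;=\;2^{-g/2}.$$

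To prove the latter I would parametrise $\Pic_{g-1}(X)=\C^{g}/(\Z^{g}+\Omega\Z^{g})$ by $z=a+\Omega b$ with $a,b\in[0,1]^{g}$. In these coordinates $\nu^{g}/g!$ pulls back to $da\,db$ and the exponential prefactor of $\|\theta\|^{2}$ becomes a Gaussian in $b$ with quadratic form $\Im\Omega$. Expanding $|\theta(a+\Omega b,\Omega)|^{2}$ in its Fourier series and integrating over $a\in[0,1]^{g}$ kills the off-diagonal terms. Completing the square in $b$ and using the identity $\sum_{n\in\Z^{g}}\int_{[0,1]^{g}}f(b+n)\,db=\int_{\R^{g}}f(u)\,du$ collapses the remaining sum-integral to a single Gaussian integral over $\R^{g}$, which evaluates to $\det(2\Im\Omega)^{-1/2}$; combined with the factor $\det(\Im\Omega)^{1/2}$ coming from the definition of $\|\theta\|$ this gives $2^{-g/2}$.

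Putting the three ingredients together yields $\int\log\|\theta\|\,\nu^{g}/g!<-\tfrac{g}{4}\log 2$, and substituting into Theorem \ref{mainresult} together with $\varphi(X)\geq 0$ produces $\delta(X)>6g\log 2-8g\log 2\pi=-2g\log(2\pi^{4})$, as required. I expect the explicit Gaussian evaluation to be the main technical step; everything else is formal manipulation of the main theorem and of Jensen's inequality.
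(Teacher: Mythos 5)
Your proposal is correct and follows essentially the same route as the paper: Theorem \ref{mainresult} combined with $\varphi(X)\ge 0$ (see (\ref{kzbound})) and the Jensen-inequality bound $H(X)<-\tfrac{g}{4}\log 2$ of Proposition \ref{Hbound}. The only difference is that you re-derive the normalization $\tfrac{1}{g!}\int\|\theta\|^{2}\nu^{g}=2^{-g/2}$ by the Gaussian computation, which the paper simply takes as property ($\theta$4) of $\|\theta\|$ (with the computation referred to \cite[Proposition 8.5.6]{BL04}); your sketch of that computation is correct.
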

In the proof we apply the inequalities $\int_{\Pic_{g-1}(X)}\log\|\theta\|\tfrac{\nu^{g}}{g!}< -\tfrac{g}{4}\log2$ and $\varphi(X)\ge0$, where the former follows from Jensen's formula.

As another application, we obtain a canonical extension of the invariants $\delta$ and $\varphi$ to indecomposable principally polarised complex abelian varieties. De Jong introduced in \cite{dJo10} the function $\eta=\ltrans{(\theta_j)}(\theta_{jk})^c(\theta_j)$ on $\C^g$, where $(\theta_j)$ is the vector of the first partial derivatives of a theta function $\theta$ associated to a principally polarised complex abelian variety and $(\theta_{jk})$ is the matrix of its second partial derivatives.
In \cite{dJo08} he also defined a real valued version $\|\eta\|$ on $\Theta$, the zero divisor of $\theta$, see also Section \ref{secinvariantsabelian}. We will deduce the following theorem from Theorem \ref{mainresult} and from a formula for $\delta(X)$ by de Jong \cite[Theorem 4.4]{dJo08}.

\begin{Thm}\label{thmdeltavarphiabelian}
For any compact and connected Riemann surface $X$ of genus $g\ge 1$, the invariant $\delta(X)$ satisfies
$$\delta(X)=2(g-7)\int_{\Pic_{g-1}(X)}\log\|\theta\|\tfrac{\nu^{g}}{g!}-2\int_{\Theta}\log\|\eta\|\tfrac{\nu^{g-1}}{g!}-4g\log 2\pi.$$
Moreover, the invariant $\varphi(X)$ satisfies
$$\varphi(X)=(g+5)\int_{\Pic_{g-1}(X)}\log\|\theta\|\tfrac{\nu^{g}}{g!}-\int_{\Theta}\log\|\eta\|\tfrac{\nu^{g-1}}{g!}+2g\log 2\pi.$$
\end{Thm}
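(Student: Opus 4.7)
The theorem amounts to solving for $\delta(X)$ and $\varphi(X)$ in terms of the two integrals $A:=\int_{\Pic_{g-1}(X)}\log\|\theta\|\tfrac{\nu^g}{g!}$ and $B:=\int_{\Theta}\log\|\eta\|\tfrac{\nu^{g-1}}{g!}$, so I would set up and solve a $2\times 2$ linear system in these unknowns.

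The first equation is furnished by Theorem \ref{mainresult} itself, which reads $\delta(X) = -24A + 2\varphi(X) - 8g\log 2\pi$. The second equation is to be extracted from de Jong's formula \cite[Theorem 4.4]{dJo08}: after translating his notation for $\|\theta\|$, $\|\eta\|$, and the Kähler form into the conventions fixed in Section \ref{secinvariantsabelian}, his result should take the shape
$$\delta(X) = 2(g-7)A - 2B - 4g\log 2\pi,$$
which is exactly the first displayed identity of the theorem, so nothing beyond the notational translation is required for that formula.

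Substituting this expression for $\delta(X)$ back into Theorem \ref{mainresult} and solving for $\varphi(X)$ yields
$$2\varphi(X)=\delta(X)+24A+8g\log 2\pi=2(g-7)A-2B-4g\log 2\pi+24A+8g\log 2\pi=2(g+5)A-2B+4g\log 2\pi,$$
which, after division by $2$, is the second claimed identity.

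The main obstacle is the careful bookkeeping needed to bring de Jong's Theorem 4.4 into the normalizations of the present paper: one must verify that his conventions for the theta function, for $\|\eta\|$ on $\Theta$, and for the Kähler form $\nu$ agree with those fixed in Section \ref{secinvariantsabelian}, and track any resulting powers of $2\pi$. Once this translation has been carried out, both identities follow from a single round of linear elimination, so the analytic content of the theorem is entirely concentrated in Theorem \ref{mainresult} and in de Jong's formula.
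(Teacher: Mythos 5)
Your overall strategy (one linear relation from Theorem \ref{mainresult}, a second one from de Jong's Theorem 4.4, then eliminate) is the same as the paper's, and your final elimination arithmetic is correct. But there is a genuine gap in the claim that de Jong's Theorem 4.4, ``after notational translation,'' already has the shape $\delta(X)=2(g-7)H(X)-2\Lambda(X)-4g\log 2\pi$. De Jong's result is a \emph{pointwise} identity on $\Theta^{sm}$: it expresses $\|\eta\|(D)$ in terms of $\delta(X)$, values of $\|\theta\|$, and Arakelov--Green functions $G(D,Q)$, $G(\sigma(D),R)$, $G(R,Q)$, $G(D,\sigma(D))$. Turning it into an identity between the integrated invariants $H(X)$ and $\Lambda(X)$ is the actual analytic content of the theorem, and it is not a bookkeeping exercise: one first specializes $R$ to the points of $D$ to get formula (\ref{deltaeta}), takes logarithms, and integrates against $\tfrac{1}{(g!)^2}\Phi^*\nu^g$ over $X^g$. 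In that integration the theta terms give $(g-1)H(X)$ only because of the identity (\ref{intpb2}) (which rests on Proposition \ref{propphipsi}, in particular $\Psi^*\nu^g=4\Phi^*\nu^g$), the Green-function terms do \emph{not} integrate to zero but produce $-\tfrac12\varphi(X)$ via Lemma \ref{lemgsigmaint}, and the left-hand side is identified with $\Lambda(X)$ via the fibre-integration Lemma \ref{lemformen} and the degree of $\Phi_\Theta$.

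Consequently, what de Jong's formula yields after integration is the relation $\Lambda(X)=(g-1)H(X)-\tfrac14\delta(X)-\tfrac12\varphi(X)$, which still involves $\varphi(X)$; the first displayed identity of the theorem is obtained only after combining this with Theorem \ref{mainresult}, not before. So your plan to prove the first identity ``from de Jong alone'' and then derive the second by substitution inverts the actual logical order and omits the step where $\varphi$ enters through the Green-function integrals. (A minor additional point: the paper treats $g=1$ separately via Faltings' elliptic-curve computation, which your argument does not address.)
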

Here, $\Theta\subseteq\Pic_{g-1}(X)$ is the canonical divisor consisting of line bundles of degree $g-1$ having global sections.
Also, we deduce an explicit formula for the Arakelov--Green function $G(\cdot,\cdot)$. For this purpose, we calculate the invariant $A(X)$ in Bost's formula for $G$, see \cite{Bos87}.
\begin{Thm}\label{thmgreenexplicit}
	For any compact and connected Riemann surface $X$ of genus $g\ge2$ it holds
	$$\log G(P,Q)=\int_{\Theta+P-Q}\log\|\theta\|\tfrac{\nu^{g-1}}{g!}+\tfrac{1}{2g} \varphi(X)-\int_{\Pic_{g-1}(X)}\log\|\theta\|\tfrac{\nu^{g}}{g!}.$$
\end{Thm}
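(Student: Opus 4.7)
The plan is to use Bost's theta-function representation of the Arakelov-Green function from \cite{Bos87}: for any compact Riemann surface $X$ of genus $g\ge 2$ there is a real invariant $A(X)$ depending only on $X$ such that
\[ \log G(P,Q) = \int_{\Theta+P-Q}\log\|\theta\|\frac{\nu^{g-1}}{g!} + A(X). \]
Writing $B(P,Q)$ for the theta-integral on the right, the identity is established by verifying that $B(P,Q)$ shares, up to an additive constant, the defining properties of $\log G$: (i) symmetry in $(P,Q)$, coming from the invariance $\|\theta\|(L)=\|\theta\|(K_X-L)$ combined with the involution $L\mapsto K_X-L$ of $\Theta$; (ii) a logarithmic singularity of the correct strength along the diagonal, via the Poincaré-Lelong formula for $\log\|\theta\|$ on $\Pic_{g-1}(X)$; and (iii) the curvature equation $\partial\bar\partial_{Q}B(P,\cdot)=\pi i(\delta_P-\mu)$ off the diagonal, using the identity $u^{*}\nu=g\mu$ for the Abel-Jacobi embedding $u\colon X\to\Pic_0(X)$, where $\mu$ is the Arakelov $(1,1)$-form.

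It then remains to compute $A(X)$ and identify it with $\tfrac{1}{2g}\varphi(X)-\int_{\Pic_{g-1}(X)}\log\|\theta\|\nu^g/g!$. Integrating Bost's identity against $\mu(P)$ and using the normalisation $\int_X\log G(P,Q)\mu(P)=0$ gives
\[ A(X) = -\int_X\int_\Theta\log\|\theta\|(y+P-Q)\frac{\nu^{g-1}(y)}{g!}\mu(P), \]
which must be independent of $Q$. To evaluate this double integral I would use the map $F\colon X\times\Theta\to\Pic_{g-1}(X)$, $(P,y)\mapsto y+P-Q$; since $X\cdot[\Theta]=g$, the map $F$ is generically a degree-$g$ covering. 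Decomposing $F^{*}(\nu^g/g!)$ into a main term proportional to $\mu\wedge\nu^{g-1}|_\Theta$ plus a cross term of bidegree $(1,1)$ and applying the projection formula, the double integral becomes a linear combination of $\int_{\Pic_{g-1}(X)}\log\|\theta\|\nu^g/g!$ and a correction integral.

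The main obstacle is to identify this correction with $-\tfrac{1}{2g}\varphi(X)$. The cross form encodes the Bergman kernel of $X$, so the correction integral has exactly the shape of Kawazumi's defining integral for $\varphi(X)$, but pinning down the exact constant requires care. The cleanest way to fix the coefficient is to avoid a first-principles computation and match instead with Theorem \ref{mainresult}: substituting Bost's identity into a known expression for $\delta(X)$ in terms of the Arakelov-Green function, for example via de Jong's formula \cite[Theorem 4.4]{dJo08}, yields a second expression for $\delta(X)$ as a linear combination of $\varphi(X)$, $\int_{\Pic_{g-1}(X)}\log\|\theta\|\nu^g/g!$, and $A(X)$. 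Comparing coefficients against Theorem \ref{mainresult} pins down $A(X)$ and produces the claimed formula.
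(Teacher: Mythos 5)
Your steps (1)--(4) are essentially the paper's own argument: the paper likewise starts from Bost's identity (\ref{Bostinvariant}) (so your re-verification of the defining properties of $\log G$ is redundant), averages it against $\mu(P)$ using (G4) to express $A(X)$ as exactly the double integral you write down, and then transfers that integral to $X^g$ (via the maps $\Phi_\Theta$, $\Phi$, Lemma \ref{lemformen} and de Jong's function $\|\Lambda\|$ from (\ref{Lambda})), so that the whole problem reduces to evaluating the correction term $\tfrac{1}{(g!)^2}\int_{X^g} g(\sigma(P_1+\dots+P_{g-1}),P_g)\,\Phi^*\nu^g$. The one place where you diverge is the only place carrying real content, and your shortcut there does not work. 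Theorem \ref{mainresult} is a relation among $\delta$, $\varphi$ and $H$ only, and de Jong's formula (\ref{deltaeta}) contains no information about the normalisation constant $A(X)$ beyond what is already encoded in (G4); so no coefficient comparison of that kind can pin $A(X)$ down. Concretely: if you substitute Bost's identity into (\ref{deltaeta}) (or Faltings' or Gu\`ardia's formula) and integrate against product measures $\mu\times\dots\times\mu$ --- the only measures for which you can evaluate all the resulting theta-integrals without further input --- then every Green-function term contributes its theta-integral average, which by the $\mu$-averaged Bost identity is $-A(X)$, plus the explicit $+A(X)$ coming from the substitution; $A(X)$ cancels identically and you recover a tautology. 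Any relation you can actually derive this way is free of $A(X)$.

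To make $A(X)$ survive you must pair Green functions with measures of $\Phi^*\nu^g$-type, and then you are forced to evaluate integrals such as $\int_{X^g} g(P_k,P_l)\,\Phi^*\nu^g$ or the integral with $g(\sigma(P_1+\dots+P_{g-1}),P_g)$ --- precisely the ``correction integral'' you deferred. This is the genuinely new input and it cannot be outsourced to Theorem \ref{mainresult}. The paper supplies it in Lemmas \ref{lemgsigmaint} and \ref{lemgsigmavarphi}: from (\ref{hdelta}) one computes $h_\Delta^2=\Phi_2^*\nu^2-2(g^2-1)\mu(P_1)\mu(P_2)$, so by the definition (\ref{kzdef}) of $\varphi$ and Lemma \ref{lemformen} the integral of $g(P_k,P_l)$ against $\Phi^*\nu^g/(g!)^2$ equals $\tfrac{1}{2g(g-1)}\varphi(X)$, and the involution symmetry of the theta divisor upgrades this to the value $\tfrac{1}{2g}\varphi(X)$ for the term involving $\sigma$. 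Replacing your coefficient-matching step by this short computation closes the gap, and then your proof coincides with the paper's.
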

As a consequence of the theorem, we obtain the following upper bound for the Arakelov--Green function.
\begin{Cor}\label{corgreenbound}
	Let $X$ be any compact and connected Riemann surface of genus $g\ge 1$.
For any real number $r$ satisfying $r\ge \tfrac{6}{g}-1$ and $r>0$ we have
$$\sup_{P,Q\in X}g(P,Q)\le\tfrac{1+r}{24}\delta(X)+\tfrac{g(1+r)}{3}\log 2\pi+\log c_g+\tfrac{g(1+r)}{4}\log\tfrac{1+r}{2r},$$
where $c_g=\tfrac{g+2}{2}$ for $g\le 3$ and $c_g=\tfrac{g+2}{2}\left(\tfrac{g+2}{\pi\sqrt{3}}\right)^{g/2}$ for $g\ge 4$.
In particular, the Arakelov--Green function
	is bounded by
$$\sup_{P,Q\in X}g(P,Q)<\tfrac{1}{24g}\max(6,g+1)\delta(X)+\tfrac{3}{4}g\cdot\log g+4.$$ 
\end{Cor}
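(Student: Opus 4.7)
The plan is to combine Theorems \ref{thmgreenexplicit} and \ref{mainresult} to eliminate $A(X):=\int_{\Pic_{g-1}(X)}\log\|\theta\|\tfrac{\nu^g}{g!}$. Solving Theorem \ref{mainresult} for $-A(X)$ and substituting into Theorem \ref{thmgreenexplicit} yields
$$\log G(P,Q)=\int_{\Theta+P-Q}\log\|\theta\|\tfrac{\nu^{g-1}}{g!}+\tfrac{\delta(X)}{24}+\tfrac{g}{3}\log 2\pi+\tfrac{6-g}{12g}\varphi(X).$$

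To introduce the free parameter $r$, I would write $\tfrac{\delta(X)}{24}=\tfrac{(1+r)\delta(X)}{24}-\tfrac{r\delta(X)}{24}$ and re-expand the subtracted piece by Theorem \ref{mainresult} as $-\tfrac{r\delta(X)}{24}=rA(X)-\tfrac{r\varphi(X)}{12}+\tfrac{gr}{3}\log 2\pi$. Collecting terms produces
$$\log G(P,Q)=\int_{\Theta+P-Q}\log\|\theta\|\tfrac{\nu^{g-1}}{g!}+\tfrac{(1+r)\delta(X)}{24}+\tfrac{g(1+r)}{3}\log 2\pi+\tfrac{6-g(1+r)}{12g}\varphi(X)+rA(X).$$
The hypothesis $r\ge\tfrac{6}{g}-1$ forces $6-g(1+r)\le 0$, so by $\varphi(X)\ge 0$ the $\varphi$-term is non-positive; likewise Jensen's inequality (as used in the proof of Corollary \ref{deltalowerbound}) gives $A(X)<-\tfrac{g}{4}\log 2<0$, so $rA(X)<0$. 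Dropping both non-positive contributions yields the cleaner inequality
$$\log G(P,Q)\le\int_{\Theta+P-Q}\log\|\theta\|\tfrac{\nu^{g-1}}{g!}+\tfrac{(1+r)\delta(X)}{24}+\tfrac{g(1+r)}{3}\log 2\pi.$$

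The main obstacle is then the uniform estimate
$$\int_{\Theta+P-Q}\log\|\theta\|\tfrac{\nu^{g-1}}{g!}\le\log c_g+\tfrac{g(1+r)}{4}\log\tfrac{1+r}{2r}.$$
I plan to apply Jensen's inequality in the form $\int\log f\,d\mu\le\tfrac{1}{1+r}\log\int f^{1+r}\,d\mu$ to the probability measure $\nu^{g-1}/g!$ on $\Theta+P-Q$, reducing to an $L^{1+r}$-estimate for $\|\theta\|$. Using the defining formula $\|\theta(z,\tau)\|^2=(\det Y)^{1/2}e^{-\pi y^T Y^{-1}y}|\theta(z,\tau)|^2$ together with the termwise majoration $|\theta(z,\tau)|\le\sum_n e^{-\pi n^T Y n-2\pi n^T y}$, the integral becomes, after completing the square in $y$, a Gaussian integral in which the variance is scaled by a factor proportional to $r$; its normalising constant gives the factor $((1+r)/(2r))^{g(1+r)/4}$, while the remaining discrete theta-series is controlled by explicit lattice estimates, with the split into the regimes $g\le 3$ and $g\ge 4$ producing the piecewise value of $c_g$.

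Finally, the ``in particular'' bound follows by specialising $r=\tfrac{6}{g}-1$ when $g\le 5$ and $r=\tfrac{1}{g}$ when $g\ge 6$, so that $\tfrac{1+r}{24}=\tfrac{\max(6,g+1)}{24g}$ in both cases; the residual terms $\tfrac{g(1+r)}{3}\log 2\pi+\log c_g+\tfrac{g(1+r)}{4}\log\tfrac{1+r}{2r}$ then simplify to at most $\tfrac{3g}{4}\log g+4$ via elementary estimates such as $\log(g+2)\le\log g+1$ and $\log(\pi\sqrt{3})>0$ applied to the explicit form of $c_g$.
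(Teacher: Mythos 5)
Your algebraic preparation is fine and is in substance the same as the paper's: rewriting $-H(X)=-\int_{\Pic_{g-1}(X)}\log\|\theta\|\tfrac{\nu^g}{g!}$ via Theorem \ref{mainresult}, splitting $\tfrac{\delta}{24}=\tfrac{(1+r)\delta}{24}-\tfrac{r\delta}{24}$ and re-expanding gives exactly the identity the paper uses, and the hypothesis $r\ge\tfrac6g-1$ together with $\varphi\ge0$ correctly kills the $\varphi$-term; the choice $r=\tfrac6g-1$ for $g\le5$, $r=\tfrac1g$ for $g\ge6$ for the ``in particular'' part also matches the paper. The gap is in what you do next: you discard the term $rH(X)$ as merely non-positive and then try to prove the uniform estimate $\int_{\Theta+P-Q}\log\|\theta\|\tfrac{\nu^{g-1}}{g!}\le\log c_g+\tfrac{g(1+r)}{4}\log\tfrac{1+r}{2r}$, with a right-hand side depending only on $g$ and $r$. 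That estimate is false. The function $\log\|\theta\|$ attains values of size $\tfrac14\log\det(\Im\Omega)+O(1)$ (Autissier's bound $\|\theta\|\le c_g(\det Y)^{1/4}$ is sharp in this respect), and $\det(\Im\Omega)$ is unbounded on the moduli space; already for $g=1$ with $\Omega=iy$, $y\to\infty$, and $P-Q$ the appropriate $2$-torsion point, the left side equals $\log\|\theta\|(0)\sim\tfrac14\log y\to\infty$ while your right side is a constant. No Jensen/$L^{1+r}$ computation on the translated theta divisor can remove this $\det Y$-dependence.

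The term $rH(X)$ you threw away is precisely the compensator: the paper's Corollary \ref{thetabound} (Autissier's bound combined with Lemma \ref{Ybound}, which trades $\log\det Y$ against $H(A,\Theta)$ via Parseval and Jensen over the whole abelian variety, not over $\Theta$) states $\log\|\theta\|(z)\le -rH(X)+\log c_g+\tfrac{g(1+r)}{4}\log\tfrac{1+r}{2r}$ pointwise, and integrating this over $\Theta+P-Q$ (which has $\nu^{g-1}$-mass $g!$) produces a $-rH(X)$ that cancels against the $+rH(X)$ in your identity. So the repair is simply not to drop $rH(X)$ but to pair it with the theta integral and invoke Corollary \ref{thetabound}; with that change your argument reduces to the paper's proof. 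As written, the ``main obstacle'' step is not just unproved but unprovable, so the proposal has a genuine gap.
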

Upper bounds for the Arakelov--Green function were already obtained by Merkl \cite[Theorem 10.1]{EC11} and Jorgenson--Kramer \cite{JK06} by very different methods. But our bound seems to be more explicit and more natural in the sense of Arakelov theory.
If $X$ is the modular curve $X_1(N)$, one can apply this bound to compute the complexity of an algorithm by Edixhoven for the computation of Galois representations associated to modular forms, see \cite{EC11}. Indeed, $\delta(X_1(N))$ can be bounded polynomially in $N$, see for example \cite[Remark 5.8]{JK09} if $X_1(N)$ has genus $g\ge 2$, or \cite[Corollary 1.5.1]{Jav14} in general.

The method of proof of Theorem \ref{mainresult} allows us moreover to establish the following generalization of Rosenhain's formula on $\theta$-derivatives. We denote by $\|J\|$ the derivative version of $\|\theta\|$ introduced by Gu\`ardia \cite[Definition 2.1]{Gua99}.
\begin{Thm}\label{conjguar}
Let $X$ be any hyperelliptic Riemann surface of genus $g\ge 2$ and denote by $W_1,\dots,W_{2g+2}$ the Weierstraß points of $X$. For any permutation $\tau\in\mathrm{Sym}(2g+2)$ it holds
$$\|J\|(W_{\tau(1)},\dots,W_{\tau(g)})=\pi^g\prod_{j=g+1}^{2g+2}\|\theta\|(W_{\tau(1)}+\dots+W_{\tau(g)}-W_{\tau(j)}).$$
\end{Thm}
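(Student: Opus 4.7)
The plan is to combine the curvature-based strategy of Theorem \ref{mainresult} with the classical Thomae formula for theta constants on hyperelliptic Jacobians. First I would observe that both sides of the asserted identity are invariant under the natural action of $\mathrm{Sym}(2g+2)$ permuting the Weierstraß points: on the left, the sign of any permutation is absorbed by the absolute value built into $\|J\|$, while on the right the product over $j=g+1,\dots,2g+2$ is manifestly symmetric in the remaining $g+2$ points. Hence it suffices to prove the identity for one fixed permutation, which for concreteness I take to be $\tau=\mathrm{id}$.

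To establish the identity for $\tau=\mathrm{id}$, I would let one point $P\in X$ move and set
$$F(P):=\log\|J\|(W_1,\ldots,W_{g-1},P)-g\log\pi-\sum_{j=g+1}^{2g+2}\log\|\theta\|(W_1+\cdots+W_{g-1}+P-W_j).$$
Using the curvature equation $dd^c\log\|\theta\|^{2}=\delta_\Theta-g\,\nu$ pulled back along the maps $P\mapsto[W_1+\cdots+W_{g-1}+P-W_j]$, together with Gu\`ardia's curvature formula for $\|J\|$ from \cite{Gua99}, I would show that $dd^c F=0$ as a current on $X$, i.e.\ that the smooth $(1,1)$-parts cancel and the logarithmic singularities of the two sides coincide. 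For the singular parts, the key input is Riemann's vanishing theorem applied to the hyperelliptic Jacobian: the divisor of $\|\theta\|(W_1+\cdots+W_{g-1}+\cdot-W_j)$ on $X$ is determined by the $2$-torsion class of $\sum_{i<g}W_i-W_j$, and summing over $j$ reproduces exactly the zero locus of $\|J\|(W_1,\dots,W_{g-1},\cdot)$.

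Once $F$ is shown to be harmonic on the compact Riemann surface $X$, it is constant, and the constant is evaluated at $P=W_g$. There I would apply Thomae's formula to express each $\|\theta\|\bigl(\sum_{i=1}^g W_i-W_j\bigr)$ in terms of differences of branch points of $X$, and combine this with the explicit Wronskian expression for $J$ at Weierstraß points in the standard basis $\omega_k=x^{k-1}\,dx/y$ on the model $y^2=\prod_i(x-e_i)$. The Hodge normalization factors cancel, since $\|J\|$ carries weight $(\det\mathrm{Im}\,\tau)^{(g+2)/4}$ which matches $g+2$ copies of the weight $(\det\mathrm{Im}\,\tau)^{1/4}$ of $\|\theta\|$, and the remaining numerical factor collapses to $\pi^g$. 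The main obstacle is the curvature matching in the second step: the precise placement and multiplicities of the logarithmic singularities depend on the delicate combinatorics of hyperelliptic theta characteristics (even versus odd half-periods), and this is where the hyperelliptic hypothesis is used essentially, reflecting the fact that no analogous product formula can hold for a general Riemann surface.
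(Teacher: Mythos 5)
Your central step fails: the function $F(P)$ you define is not harmonic, so it is not constant and the evaluation at $P=W_g$ never gets off the ground. This is easiest to see from the hyperelliptic decompositions in the paper's own toolkit (Proposition \ref{equhypfactor} and Corollary \ref{jfactor}), but it can be checked directly from the curvature equations you invoke. As a function of $P$, $\log\|J\|(W_1,\dots,W_{g-1},P)$ has $\tfrac{1}{\pi i}\partial\overline{\partial}$ equal to $(g-1)g\,\mu$ minus a delta of total mass $g$ at each $W_k$, $k\le g-1$ (each fixed Weierstraß point contributes $1+(g-1)$ because $\sigma(W_k)=W_k$), whereas each of the $g+2$ terms $\log\|\theta\|(W_1+\cdots+W_{g-1}+P-W_j)$ contributes smooth part $g\,\mu$ and a delta on the degree-$g$ divisor $W_j+\sum_{k\le g-1}W_k$. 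Hence $\tfrac{1}{\pi i}\partial\overline{\partial}F=-3g\,\mu+2\sum_{k\le g-1}\delta_{W_k}+\sum_{j\ge g+1}\delta_{W_j}\neq 0$: neither the smooth $(1,1)$-parts nor the singular parts cancel, because the two sides of your $F$ have mismatched total degrees ($g-1$ fixed points against $g+2$ theta factors). The identity of the theorem is genuinely a statement at Weierstraß points only; it does not interpolate to a harmonic (or even pluriharmonic) function of a single moving point, so the "harmonic on compact $\Rightarrow$ constant" argument cannot be repaired by tweaking multiplicities without changing the statement. Two further soft spots: your symmetry reduction only shows that each side depends on the unordered $g$-element subset $\{W_{\tau(1)},\dots,W_{\tau(g)}\}$, which is the full content of the statement rather than a reduction of it (one can reduce to a fixed subset only by relabelling branch points, which needs to be said); and the proposed evaluation of the constant via Thomae's formula plus Wronskians of $x^{k-1}dx/y$ is exactly the hard nullwerte computation (relating theta derivatives to theta constants) that made this a conjecture of Gu\`ardia in the first place, so even if $F$ were constant this step would carry essentially the whole burden of the proof.

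For comparison, the paper proves the theorem without any moving point: it applies the decomposition of $\log\|\theta\|$ into Arakelov--Green functions plus $S_g(X)$ (Proposition \ref{equhypfactor}) to the two product formulas (\ref{equdiscinfty}) and (\ref{equdiscweier}) for $\|\varphi_g\|(X)$, deduces an expression for $\sum_{k\neq m}g(W_k,W_m)$ valid for every $m$, sums over $m=\tau(1),\dots,\tau(g)$, and eliminates $\log\|\varphi_g\|(X)$ by Corollary \ref{discab}; the theta-derivative input that you hoped to extract from Thomae is instead imported through de Jong's formula (\ref{djr}) via Corollary \ref{jfactor}, and the constant $\pi^g$ comes out of that bookkeeping. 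If you want to salvage your approach, you would need either a corrected interpolating function whose curvature actually vanishes (none exists with your normalization), or an independent proof of the Weierstraß-point evaluation, which is precisely what (\ref{djr}) provides in product form.
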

This gives an absolute value answer to a conjecture by Gu\`ardia \cite[Conjecture 14.1]{Gua02}.
\subsection{Main ideas of the proof}
We  describe the principal ideas of the proof of Theorem \ref{mainresult}. The case $g=1$ follows from Faltings' computations for elliptic curves in \cite[Section 7]{Fal84}. Hence, we can assume $g\ge 2$.
\paragraph{Reduction to hyperelliptic Riemann surfaces.}
Consider
\begin{align}\label{equf}
f(X)=\delta(X)+24\int_{\Pic_{g-1}(X)}\log\|\theta\|\tfrac{v^g}{g!}-2\varphi(X)
\end{align}
as a real-valued function on $\mathcal{M}_g$, the moduli space of compact and connected Riemann surfaces of genus $g$. Theorem \ref{mainresult} asserts that we have $f(X)=-8g\log 2\pi$.
We will reduce to prove Theorem \ref{mainresult} for hyperelliptic Riemann surfaces by showing that $f(X)$ is pluriharmonic. Since there are no non-constant pluriharmonic functions on $\mathcal{M}_g$ for $g\ge 3$, it follows that $f(X)$ is constant. Since for any $g\ge 3$ there exists at least one hyperelliptic Riemann surface of genus $g$ and for $g=2$ all compact and connected Riemann surface are hyperelliptic, it is then enough to compute $f(X)$ if $X$ is hyperelliptic.

To prove that $f(X)$ is pluriharmonic on $\mathcal{M}_g$, we apply the Laplace operator $\partial\overline{\partial}$ on $\mathcal{M}_g$ to the terms in (\ref{equf}). For $\varphi(X)$ and $\delta(X)$ we have an expression for the resulting forms in terms of the canonical forms $e_1^A$, $\int_{\pi_2}h^3$ and $\omega_{\mathrm{Hdg}}$ (see Section \ref{secforms}) on $\mathcal{M}_g$ by de Jong \cite{dJo16}. To calculate the application of $\partial\overline{\partial}$ to the integral in (\ref{equf}), 
we apply the Laplace operator on the universal abelian variety with level $2$ structure to $\log\|\theta\|$, and we pull back the integral to the $(g+1)$-th power of the universal Riemann surface with level $2$ structure $\mathcal{X}_g\to\mathcal{M}_g[2]$.
The pullback can be expressed in terms of Deligne pairings by a result due to de Jong \cite[Proposition 6.3]{dJo16}. The main difficulty is to express the first Chern form of the $(g+1)$-th power in the sense of Deligne pairings of the line bundle
$$\bigotimes_{j=1}^g pr_j^*T_{\mathcal{X}_g/\mathcal{M}_g[2]}\otimes \bigotimes_{j=1}^g pr_{j,g+1}^*\O(\Delta)^{\vee}\otimes\bigotimes_{j<k}^{g} pr_{j,k}^*\O(\Delta)$$
on $\mathcal{X}_g^{g+1}$ in terms of the forms $e_1^A$, $\int_{\pi_2}h^3$ and $e^A$. For the definition of $e^A$ see also Section \ref{secforms}. Here, $T_{\mathcal{X}_g/\mathcal{M}_g[2]}$ denotes the relative tangent bundle, $\Delta\subseteq\mathcal{X}_g^2$ is the diagonal and $pr_j$ and $pr_{j,k}$ denote the projections to the respective factors of $\mathcal{X}_g^{g+1}$.
We solve this problem by associating graphs to the terms in the expansion of the power, which we can classify and count.

\paragraph{The hyperelliptic case.}
To prove Theorem \ref{mainresult} for any hyperelliptic Riemann surface $X$ of genus $g\ge 2$, we generalize a formula by Bost for $\delta$ for $g=2$, see \cite[Proposition 4]{Bos87}, to hyperelliptic Riemann surfaces of genus $g\ge 2$. The generalized formula states:
\begin{align}\label{bostgeneral}
\delta(X)=-\tfrac{8(g-1)}{g}\int_{\Pic_{g-1}(X)}\log\|\theta\|\tfrac{\nu^{g}}{g!}-\tbinom{2g}{g-1}^{-1}\log\|\Delta_{g}\|(X)-8g\log2\pi.
\end{align}
We will see later in Section \ref{secgeneral} that we can canonically define the invariant $\|\Delta_{g}\|(X)$ also for non-hyperelliptic Riemann surfaces, but formula (\ref{bostgeneral}) will not be true for general Riemann surfaces.
The main ingredient of the proof of formula (\ref{bostgeneral}) is the following formula
\begin{align}\label{thetaintegrals}
\int_{\Pic_{g-1}(X)}\log\|\theta\|^{g-1}\tfrac{\nu^{g}}{g!}=\int_{X^{g}}\log\tfrac{\|\theta\|(P_{1}+\dots+P_{g}-Q)^{g}}{\|\theta\|(gP_{1}-Q)}\mu(P_{1})\dots \mu(P_{g}),
\end{align}
where $\mu$ denotes the canonical Arakelov $(1,1)$-form and $Q\in X$ is an arbitrary point.
The proof of (\ref{thetaintegrals}) consists essentially of two steps.
The first step is to decompose the function $\log\|\theta\|$ into a sum of Arakelov--Green functions and an additional invariant.
This step generalizes the decomposition in \cite[A.1.]{BMM90} for $g=2$ to arbitrary hyperelliptic Riemann surfaces.
In the second step, we use the decomposition of the first step to compare the pullbacks of the
integral of $\log\|\theta\|$ on $\Pic_{g-1}(X)$ under the maps
\begin{align*}
\Phi&\colon X^{g}\to \Pic_{g-1}(X),\quad(P_{1},\dots,P_{g})\mapsto (P_{1}+\dots+P_{g}-Q),\\
\Psi&\colon X^{g}\to \Pic_{g-1}(X),\quad(P_{1},\dots,P_{g})\mapsto (2P_{1}+P_{2}+\dots+P_{g-1}-P_{g}).
\end{align*}
In this step we also obtain a connection of the integrals of $\log\|\theta\|$ to the invariant $\varphi(X)$.

To connect formula (\ref{thetaintegrals}) with Faltings' $\delta$-invariant, we compare
two invariants obtained by the function $\|J\|$: The iterated integral over $X^{g}$ of its logarithm and the product of its values in Weierstraß points. 
Then we apply Gu\`ardia's expression for $\delta(X)$ in \cite[Proposition 1.1]{Gua99} to connect the first invariant with (\ref{thetaintegrals}) and $\delta(X)$.
De Jong proved in \cite[Theorem 9.1]{dJo07} that the second invariant is essentially $\|\Delta_{g}\|(X)$.
This together with de Jong's expression for $\delta(X)$ of hyperelliptic Riemann surfaces,
given in \cite[Corollary 1.7]{dJo05b}, leads to formula (\ref{bostgeneral}) and also to the formula in Theorem \ref{mainresult} by the connection of the integrals of $\log\|\theta\|$ to $\varphi(X)$.

We remark, that there is an alternative way to compute the constant $f(X)$:
Consider a family of Riemann surfaces $\mathscr{X}_t$ of genus $g\ge 2$ degenerating to a singular complex curve $\mathscr{X}_0$ consisting of two Riemann surfaces $X_1$ and $X_2$ of genus $g-1$ respectively $1$ meeting in one point. For this family the integral in (\ref{equf}) degenerates nicely and the asymptotic behaviour of $\delta(\mathscr{X}_t)$ and $\varphi(\mathscr{X}_t)$ were studied by Wentworth \cite{Wen91} and de Jong \cite{dJo14a}. Using their results, one can deduce that $f(\mathscr{X}_t)$ degenerates to $f(X_1)+f(X_2)$ in this family. Hence, one obtains $f(X)=-8g\log 2 \pi$ by induction.
However, the methods of our proof for hyperelliptic Riemann surfaces are of independent interest. For example, they also prove formulas (\ref{bostgeneral}) and Theorem \ref{conjguar}.

\subsection{Overview}
In the following, we explain the structure of this paper.
We define all required invariants of abelian varieties and Riemann surfaces in Section \ref{sectioninvariants}.
In Section \ref{Sectionintegrals} we study certain integrals of $\log\|\theta\|$ and of the Arakelov--Green function. The subsequent section deals with the proof of Theorem \ref{mainresult} for hyperelliptic Riemann surfaces. First, we introduce our decomposition of $\log\|\theta\|$ in Section \ref{secdecomposition}. We prove equation (\ref{thetaintegrals}) in Section \ref{secintegralhyper}. In Section \ref{sectionmainhyper} we obtain Theorem \ref{mainresult} for hyperelliptic Riemann surfaces, and we give some consequences and examples. In Section \ref{secrosenhain} we apply our decomposition of $\log\|\theta\|$ to prove Theorem \ref{conjguar}. 

In Section \ref{Sectiongeneral} we prove Theorem \ref{mainresult} in general. For this purpose, we discuss the forms obtained by applying the Laplace operator $\partial\overline{\partial}$ on $\mathcal{M}_g$ to $\varphi(X)$, $\delta(X)$ and $H(X)$ in Section \ref{secforms}, where $H(X)$ denotes the integral in Theorem \ref{mainresult}. To compare the latter one with the former ones, we introduce the Deligne pairing in Section \ref{secdeligne}, and we calculate the expansion of the power in the sense of Deligne pairings of a certain line bundle using graphs in Section \ref{secgraphs}. In Section \ref{secgeneral} we conclude our main result Theorem \ref{mainresult}, and we deduce Corollary \ref{deltalowerbound}. After bounding the function $\|\theta\|$ in Section \ref{secbound}, we study the Arakelov--Green function in Section \ref{green}, where we obtain Theorem \ref{thmgreenexplicit} and Corollary \ref{corgreenbound}.

In the last section we consider indecomposable principally polarised complex abelian varieties.
We prove Theorem \ref{thmdeltavarphiabelian} in Section \ref{secabelian}. This yields a canonical extension of $\delta$ and $\varphi$ to indecomposable principally polarised complex abelian varieties.
We discuss some of their asymptotic behaviours in Section \ref{secasmypt}.

\subsection*{Acknowledgement}
The contents of this paper largely coincide with my Ph.D. thesis. I would like to thank my advisor Gerd Faltings for introducing me into Arakelov theory and for his suggestion to study the $\delta$-invariant.
I also would like to thank Rafael von K\"anel and Robin de Jong for useful discussions and Michael Rapoport for a remark on an early version of this paper.

\section{Invariants}\label{sectioninvariants}
We give the definitions of the invariants appearing in this paper and we state some of their properties and relations.
\subsection{Invariants of abelian varieties}
\label{secinvariantsabelian}
In this subsection we define some invariants of abelian varieties.
Let $(A,\Theta)$ be any principally polarised complex abelian variety of dimension $g\ge 1$, where $\Theta\subseteq A$ denotes a divisor, such that $\O(\Theta)$ is an ample and symmetric line bundle satisfying $\dim H^0(A,\O(\Theta))=1$. The principal polarisation of $(A,\Theta)$ determines the divisor class of $\Theta$ only up to a translation by a $2$-torsion point.
There exists a complex and symmetric $g\times g$ matrix $\Omega$ with positive definite imaginary part $Y=\Im\Omega$, such that $A=\C^g/(\Z^g+\Omega\Z^g)$ and $\Theta$ is the zero divisor of the function
$$\theta\colon \C^g \to \C,\quad z\mapsto \theta(z)=\theta(\Omega;z)=\sum_{n\in\Z^g}\exp(\pi i\ltrans{n}\Omega n+2\pi i\ltrans{n}z),$$
see for example \cite[Section 8]{BL04}.

Since we have $\theta(z+m+n\Omega)=\exp(-\pi i\ltrans{n}\Omega n-2\pi i\ltrans{n}z)\theta(z)$
for $m,n\in\Z^{g}$, we obtain a well-defined, real-valued function $\|\theta\|\colon A\to \R_{\ge0}$ by
$$\|\theta\|(z)=\|\theta\|(\Omega;z)=\det(Y)^{1/4}\exp(-\pi\ltrans{(\Im z)}Y^{-1}(\Im z))\cdot |\theta|(z).$$

We associate to $(A,\Theta)$ the canonical $(1,1)$-form
$$\nu=\nu_{(A,\Theta)}=\tfrac{i}{2}\sum_{j,k=1}^g (Y^{-1})_{j k} dZ_j\wedge d\overline{Z}_k,$$
where $Z_1,\dots,Z_g$ are coordinates in $\C^g$. This form is translation-invariant. The function $\|\theta\|$ could also
be defined as the unique function $\|\theta\|\colon A\to \R_{\ge 0}$ satisfying:
\begin{enumerate}[($\theta$1)]
	\item The function $\|\theta\|^2$ is $C^{\infty}$ on $A$.
	\item The zero divisor of $\|\theta\|$ is $\Theta$.
	\item For $z\notin \Theta$ its curvature is given by $\partial \bar{\partial}\log\|\theta\|(z)^2=2\pi i\nu$.\label{theta2}
	\item The function is normed by $\tfrac{1}{g!}\int_{A}\|\theta\|^{2}\nu^{g}=2^{-g/2}$.
\end{enumerate}
For a calculation of the third property see \cite[Proposition 8.5.6]{BL04}. In particular, the function $\|\theta\|$ depends on the choice of $\Theta$ for a principally polarised complex abelian variety.
Further, we define the following invariant
$$H(A,\Theta)=\tfrac{1}{g!}\int_A \log\|\theta\|\nu^g.$$
If $(A,\Theta)$ is the Jacobian variety of a Riemann surface $X$ of genus $g=2$, this definition coincides with the definition of $\log\|H\|(X)$ in \cite{Bos87}. The invariant $H(A,\Theta)$ is bounded from above.
\begin{Pro}\label{Hbound}
Let $(A,\Theta)$ be any principally polarised complex abelian variety of dimension $g\ge 1$. Then it holds $H(A,\Theta)< -\tfrac{g}{4}\log 2$.
\end{Pro}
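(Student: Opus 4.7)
The plan is to apply Jensen's inequality, since we have both a normalization for $\int_A \|\theta\|^2 \nu^g$ (property ($\theta$4)) and concavity of the logarithm. The first step is to observe that $\tfrac{\nu^g}{g!}$ is a probability measure on $A$, i.e.\ $\tfrac{1}{g!}\int_A \nu^g = 1$. This is standard for a principally polarised complex abelian variety: choosing a fundamental domain for $\Z^g+\Omega\Z^g$ in $\C^g$ of Euclidean volume $\det Y$, a direct computation with $\nu=\tfrac{i}{2}\sum (Y^{-1})_{jk}\,dZ_j\wedge d\overline{Z}_k$ shows that $\nu^g/g!$ equals $(\det Y)^{-1}$ times the Euclidean volume form, so the total mass is $1$.

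Once this is established, I apply Jensen's inequality to the concave function $\log$ and the nonnegative integrable function $\|\theta\|^2$. Combined with the normalization $\tfrac{1}{g!}\int_A \|\theta\|^2\nu^g = 2^{-g/2}$ from ($\theta$4), this gives
$$\frac{2}{g!}\int_A \log\|\theta\|\,\nu^g \;=\; \frac{1}{g!}\int_A \log\bigl(\|\theta\|^2\bigr)\nu^g \;\le\; \log\!\left(\frac{1}{g!}\int_A \|\theta\|^2\,\nu^g\right) \;=\; -\tfrac{g}{2}\log 2,$$
so $H(A,\Theta) \le -\tfrac{g}{4}\log 2$.

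For the strict inequality, I use that $\log$ is strictly concave and that $\|\theta\|^2$ is not almost everywhere constant on $A$: by property ($\theta$2) it vanishes along the divisor $\Theta$, while by ($\theta$1) it is smooth, so it cannot be constant on a set of full measure. Hence Jensen's inequality is strict, giving $H(A,\Theta)< -\tfrac{g}{4}\log 2$.

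The only technical point requiring care is that $\log\|\theta\|$ has $-\infty$ singularities along $\Theta$, so one must check that the integral defining $H(A,\Theta)$ converges and that Jensen's inequality applies in this slightly singular setting. Since $\|\theta\|$ vanishes to first order along the smooth divisor $\Theta$, the function $\log\|\theta\|$ has only logarithmic singularities, which are locally integrable against the smooth volume form $\nu^g$; thus the integral is well-defined and finite, and the standard form of Jensen's inequality for probability measures applies without modification.
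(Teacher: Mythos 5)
Your proof is correct and uses exactly the same argument as the paper: Jensen's inequality applied to $\log$ and $\|\theta\|^2$ with the probability measure $\nu^g/g!$, combined with the normalization ($\theta$4). The paper states this in one line and you have simply spelled out the supporting details (normalization of the volume, strictness from strict concavity and non-constancy of $\|\theta\|^2$, local integrability of $\log\|\theta\|$), all of which are accurate.
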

\begin{proof}
Since $\int_{A}\nu^{g}=g!$, Jensen's inequality and ($\theta$4) give
	$$2H(A,\Theta)=\tfrac{1}{g!}\int_{A}\log\|\theta\|^{2}\nu^{g}<\log\left(\tfrac{1}{g!}\int_{A}\|\theta\|^{2}\nu^{g}\right)=-\tfrac{g}{2}\log 2.$$
 \end{proof}

We obtain another function $\eta\colon \C^g\to \C$ by considering certain partial derivatives of $\theta$:
$$\eta(z)=\det\begin{pmatrix}
\tfrac{\partial^2\theta}{\partial Z_1\partial Z_1}(z) &\dots&\tfrac{\partial^2\theta}{\partial Z_1\partial Z_g}(z)& \tfrac{\partial\theta}{\partial Z_1}(z)\\
\vdots&\ddots&\vdots&\vdots\\
\tfrac{\partial^2\theta}{\partial Z_g\partial Z_1}(z) &\dots&\tfrac{\partial^2\theta}{\partial Z_g\partial Z_g}(z)& \tfrac{\partial\theta}{\partial Z_g}(z)\\
\tfrac{\partial\theta}{\partial Z_1}(z)&\dots&\tfrac{\partial\theta}{\partial Z_g}(z)&0  \end{pmatrix},$$
see also \cite{dJo10}. Further, we get a real-valued variant $\|\eta\|\colon \Theta\to \R_{\ge0}$ by
$$\|\eta\|(z)=\det(Y)^{(g+5)/4}\exp(-\pi(g+1)\ltrans{(\Im z)}Y^{-1}(\Im z))\cdot|\eta|(z),$$
see also \cite{dJo08}.
The function $\eta$ is identically zero on $\Theta$ if and only if $(A,\Theta)$ is decomposable, see \cite[Corollary 3.2]{dJo10}. We set 
$$\Lambda(A,\Theta)=\tfrac{1}{g!}\int_{\Theta}\log\|\eta\|\nu^{g-1}$$
if $(A,\Theta)$ is indecomposable.

By definition, the invariants $H(A,\Theta)$ and $\Lambda(A,\Theta)$ are invariant under translation by $2$-torsion points of the divisor $\Theta$. Hence, we can indeed consider them as invariants of (indecomposable) principally polarised complex abelian varieties.
\subsection{Invariants of Riemann surfaces}
\label{invariant}
In this subsection we define some invariants of Riemann surfaces.
Let $X$ be a compact and connected Riemann surface of genus $g\ge 1$.
Let $A_{i},B_{i}\in H_{1}(X,\Z)$ be a symplectic basis of homology,
that means for all $i,j$ the intersection pairings $(A_{i}.A_{j})$ and $(B_{i}.B_{j})$ are zero and $(A_{i}.B_{j})=\delta_{ij}$, where $\delta_{ij}$ denotes the Kronecker symbol. Further, we choose a basis of one forms $\omega_{1},\dots,\omega_{g}\in H^{0}(X,\Omega_{X}^{1})$,
such that $\int_{A_{j}}\omega_{i}=\delta_{ij}$. We associate to $X$ its period matrix $\Omega=\Omega_X$, which is given by $\Omega_{ij}=\int_{B_{i}}\omega_{j}$. It is symmetric and has positive definite imaginary part denoted by $Y=\Im \Omega$. The Jacobian of $X$, denoted by $\Jac(X)$, is the principally polarised abelian variety associated to $\Omega$. In the following, we shortly write $\nu=\nu_{\Jac(X)}$. For a fixed base point $Q\in X$ the Abel--Jacobi map is given by the embedding
$I\colon X\to\Jac(X),~P\mapsto(\int_{Q}^{P}\omega_{1},\dotsc,\int_{Q}^{P}\omega_{g}).$
We define the canonical $(1,1)$ form $\mu$ on $X$ by
$\mu=\tfrac{1}{g}I^{*}\nu,$
which has volume $\int_{X}\mu=1$.
Further, we denote the canonical bundle on $X$ by $K_X$.

There is a unique theta characteristic $\alpha_X$, that is $2\alpha_X=K_X$, which gives an isomorphism
\begin{align}\label{PicJac}
\Pic_{g-1}(X)\xrightarrow{\sim} \Jac(X),\quad\mathcal{L}\mapsto (\mathcal{L}-\alpha_X),
\end{align}
such that $\|\theta\|(\Omega;\mathcal{L}-\alpha_X)=0$ if and only if $H^{0}(X,\mathcal{L})\neq 0$, see for example \cite[Corollary II.3.6]{Mum83}.
We simply write $\|\theta\|(D)=\|\theta\|(\O(D)-\alpha_X)$
for a divisor $D$ of degree $g-1$. It follows, that $\|\theta\|(D)=0$ if and only
if $D$ is linearly equivalent to an effective divisor. We denote $\Theta\subseteq\Pic_{g-1}(X)$
for the divisor in $\Pic_{g-1}(X)$ defined by the zeros of $\|\theta\|$. Equivalently, $\Theta$ is given
by the line bundles of degree $g-1$ having global sections. For any effective divisor $D$ of degree $g-1$ we also simply write $\|\eta\|(D)=\|\eta\|(\O(D)-\alpha_X)$.
Since the divisor $\Theta\subseteq \Pic_{g-1}(X)$ is canonical, the functions $\|\theta\|$ and $\|\eta\|$ on $\Pic_{g-1}(X)$ do not depend on the choice of the period matrix $\Omega$.

We set $H(X)=H(\Jac(X))$ and $\Lambda(X)=\Lambda(\Jac(X))$.
Another invariant $S(X)$ of $X$ was defined by de Jong in \cite[Section 2]{dJo05a}. It satisfies
$$\log S(X)=-\int_{X}\log\|\theta\|(gP-Q)\mu(P).$$
We generalize this to a family of invariants given by
$$S_{k}(X)=\int_{X^{k}}\log\|\theta\|((g-k+1)P_{1}+P_{2}+\dots+P_{k}-Q)\mu(P_{1})\dots\mu(P_{k})$$
for every $1\le k\le g$, where $k$ stands for the dimension of the integral. In particular, we have $S_{1}(X)=-\log S(X)$. In Remark \ref{independofQ} we will see, that $S_k(X)$ does not depend on the choice of $Q$.
We will prove a relation between $H(X)$, $S_{1}(X)$ and $S_{g}(X)$ for hyperelliptic Riemann surfaces in Section \ref{secintegralhyper}.

Another way to build an invariant from the function $\theta$ is to consider
the integral of certain derivatives. For this purpose, we define as in \cite[Definition 2.1]{Gua99}
\begin{align*}
&\|J\|(P_{1},\dots,P_{g})=\det(Y)^{(g+2)/4}\exp\left(-\pi\sum_{k=1}^{g}\ltrans{y_{k}}Y^{-1}y_{k}\right)\left|\det\left(\tfrac{\partial\theta}{\partial Z_{l}}(w_k)\right)\right|,
\end{align*}
where $P_1,\dots,P_g$ denote arbitrary points of $X$, $w_{k}\in \C^g$ is a lift of the divisor class $(P_{1}+\dots+P_{g}-P_{k}-\alpha_X)\in\Jac(X)$ and $y_k$ is the imaginary part of $w_k$. To get an invariant we set
$$B(X)=\int_{X^{g}}\log\|J\|(P_{1},\dots,P_{g})\mu(P_{1})\dots\mu(P_{g}).$$
For hyperelliptic Riemann surfaces we will give a relation between $B(X)$ and $S_{g}(X)$
in Section \ref{secdecomposition}.

We define the Arakelov--Green function $G\colon X^{2}\to\R_{\ge 0}$ as the unique function
satisfying the following conditions:
\begin{enumerate}[(G1)]
\item The function $G(P,Q)^2$ is $C^{\infty}$ on $X^2$.
	\item We have $G(P,Q)>0$ for $P\neq Q$. For a fixed $Q\in X$, $G(P,Q)$ has a simple zero in $P=Q$.
	\item For $P\neq Q$ the curvature is given by $\partial_{P}\bar{\partial}_{P}\log G(P,Q)^2=2\pi i \mu(P).$\label{green2}
	\item It is normalized by $\int_{X}\log G(P,Q)\mu(P)=0$.
\end{enumerate}
One can check that $G(P,Q)=G(Q,P)$. We shortly write
$g(P,Q)=\log G(P,Q).$
Bost has shown in \cite[Proposition 1]{Bos87}, that there is an invariant $A(X)$, such that
\begin{align}\label{Bostinvariant}
g(P,Q)=\tfrac{1}{g!}\int_{\Theta+P-Q}\log\|\theta\|\nu^{g-1}+A(X).
\end{align}
We will give an explicit expression for the invariant $A(X)$ in Section \ref{green}.
Another natural invariant defined by the Arakelov--Green function is its supremum. We will bound
it by more explicit invariants also in Section \ref{green}.

Next, we recall the definition of Faltings' $\delta$ invariant in \cite[p. 402]{Fal84}.
Denote by $\psi_{1},\dots,\psi_{g}$ another basis of $H^{0}(X,\Omega_{X}^{1})$, which is
orthonormal with respect to the inner product
\begin{align}\label{hodgemetric}
\langle\psi,\psi'\rangle=\tfrac{i}{2}\int_{X}\psi\wedge\overline{\psi'}.
\end{align}
Then the defining equation for $\delta(X)$ is
\begin{align*}
\|\theta\|(P_{1}+\dots+P_{g}-Q)=\exp\left(-\tfrac{1}{8}\delta(X)\right)\cdot\frac{\|\det(\psi_{j}(P_{k}))\|_{\Ar}}{\prod_{j<k}G(P_{j},P_{k})}\cdot \prod_{j=1}^gG(P_{j},Q),
\end{align*}
where $P_{1},\dots,P_{g},Q$ are pairwise different points, such that the class of the divisor $(P_{1}+\dots+P_{g}-Q)$ lies not in $\Theta$
and the Arakelov norm $\|\cdot\|_{\Ar}$ of holomorphic one forms is induced by
\begin{align}\label{arakelovnorm}
\|dz\|_{\Ar}(P)=\lim_{Q\to P}\frac{|z(Q)-z(P)|}{G(P,Q)},
\end{align}
where $z\colon U\to\C$ is a local coordinate of a neighbourhood $P\in U\subseteq X$. 
This invariant plays an important role in Arakelov theory. For example, it is up to a constant 
the $\delta$ in the arithmetic Noether formula for the archimedean places.
In Section \ref{secgeneral} we will obtain a new expression and
a lower bound only in terms of $g$ for $\delta(X)$.

In \cite[Proposition 1.1]{Gua99} Guàrdia gave the following expression
\begin{align}\label{deltagua}
\|\theta\|(P_{1}+\dots+P_{g}-Q)^{g-1}=\exp\left(\tfrac{1}{8}\delta(X)\right)\frac{\|J\|(P_{1},\dots,P_{g})}{\prod_{j<k}G(P_{j},P_{k})}\prod_{j=1}^gG(P_{j},Q)^{g-1}.
\end{align}
Taking logarithm and integrating with $\mu(P_{1})\dots\mu(P_{g})$ gives
\begin{align}\label{deltaab}
	\delta(X)=8(g-1)S_{g}(X)-8B(X).
\end{align}

We state another formula for $\delta(X)$ by de Jong. For this purpose, let $\Theta^{sm}$ be the smooth part of $\Theta\subseteq\Pic_{g-1}(X)$. Every divisor $D\in\Theta^{sm}$ has a unique representation $D=P_1+\dots+P_{g-1}$ for some points $P_1,\dots,P_{g-1}$ on $X$. By Riemann--Roch, the involution 
$$\sigma\colon \Pic_{g-1}(X)\to \Pic_{g-1}(X),\quad D\to K_X-D$$
induces an involution on $\Theta^{sm}$.
Let $D=P_1+\dots+P_{r}$ and $E=Q_1+\dots+Q_s$ be two effective divisors. We define the Arakelov--Green function for $D$ and $E$ by
$$G(D,E)=\prod_{j=1}^r\prod_{k=1}^s G(P_j,Q_k).$$

For any $D\in\Theta^{sm}$ and any different points $Q,R\in X$, such that $Q$ (respectively $R$) is not contained in the unique expression of $D$ (respectively $\sigma(D)$) as sum of $g-1$ points, we have by \cite[Theorem 4.4]{dJo08}
\begin{align*}
\|\eta\|(D)=\exp\left(-\tfrac{1}{4}\delta(X)\right) G(D,\sigma(D)) \left(\frac{\|\theta\|(D+R-Q)}{G(R,Q)G(D,Q)G(\sigma(D),R)}\right)^{g-1}.
\end{align*}
Write $D=P_1+\dots +P_{g-1}$. If we take the product of the $g-1$ equations obtained by putting $R=P_j$ for each $j\le g-1$ in the above equation, we get
\begin{align}\label{deltaeta}
\|\eta\|(P_1+\dots +P_{g-1})=\exp\left(-\tfrac{1}{4}\delta(X)\right)\prod_{j=1}^{g-1}\frac{\|\theta\|(P_1+\dots +P_{g-1}+P_j-Q)}{G(P_j,Q)^g}.
\end{align}

\begin{Rem}\label{independofQ}
For a divisor $D\in\Theta^{sm}$ and points $P,Q\in X$ the term
\begin{align}\label{Lambda}
\log\|\Lambda\|(D)=\log\|\theta\|(D+P-Q)-g(P,Q)-g(D,Q)-g(\sigma(D),P)
\end{align}
does not depend on $P$ or $Q$, see \cite[Proposition 4.3]{dJo08}. Let $1\le k\le g$ be an integer. Setting $P=P_k$ and $D=(g-k+1)P_1+P_2+\dots+P_{k-1}$ and integrating equation (\ref{Lambda}) multiplied with $\mu(P_1)\dots\mu(P_k)$ over $X^k$, one notices that the invariant $S_k$ does indeed not depend on the choice of $Q\in X$.
\end{Rem}

Next, we define the Zhang--Kawazumi invariant $\varphi(X)$, which was introduced and studied
independently in \cite{Kaw08} and \cite{Zha10}. For this purpose, we consider the diagonal divisor $\Delta\subseteq X^{2}$.
We have a hermitian metric on $\O_{X^{2}}(\Delta)$ by $\|1\|(P_{1},P_{2})=G(P_{1},P_{2})$, where $1$ is the 
canonical section of $\O_{X^2}(\Delta)$. We denote by $h_{\Delta}$ the curvature form of $\O_{X^2}(\Delta)$.
It can be given explicitly by
\begin{align}\label{hdelta}
h_{\Delta}(P_{1},P_{2})=\mu(P_{1})+\mu(P_{2})-\tfrac{i}{2}\sum_{k=1}^{g}\left(\psi_{k}(P_{1})\wedge\bar{\psi}_{k}(P_{2})+\psi_{k}(P_{2})\wedge\bar{\psi}_{k}(P_{1})\right),
\end{align}
see \cite[Proposition 3.1]{Ara74}). We define $\varphi(X)$ by
\begin{align}\label{kzdef}
\varphi(X)=\int_{X^{2}}g(P_{1},P_{2})h^{2}_{\Delta}(P_{1},P_{2}),
\end{align}
see \cite[Proposition 5.1]{dJo16} or the proof of \cite[Proposition 2.5.3]{Zha10}.
It is not difficult to prove $\varphi(X)=0$ for $g=1$ and that we have
the lower bound
\begin{align}\label{kzbound}
	\varphi(X)>0
\end{align}
for $g\ge2$, see \cite[Corollary 1.2]{Kaw08} or \cite[Proposition 4.2]{dJo16}.

\subsection{Invariants of hyperelliptic Riemann surfaces}
We consider the more special case of hyperelliptic Riemann surfaces.
Let $X$
be any hyperelliptic Riemann surface of genus $g\ge2$. 
That means, there are
pairwise different complex numbers $a_{1},\dots,a_{2g+1}\in\C$, such that $X$ is given by the equation
\begin{align}\label{hypdef}
y^{2}=(x-a_{1})\cdot(x-a_{2})\cdot\ldots\cdot(x-a_{2g+1})(=:f(x))
\end{align}
and the unique point at infinity, denoted by $\infty\in X$.
There is a canonical involution induced by $y\mapsto -y$,
which we denote by $\sigma\colon X\to X$. The fixed points of $\sigma$ are
the Weierstraß points of $X$. They correspond to the points $x=a_{1},\dots,x=a_{2g+1}$
and the point $\infty$. We denote them by $W_{1},\dots,W_{2g+2}$, where $W_{2g+2}=\infty$.
For the symplectic basis of homology $A_1,\dots,A_g,B_1,\dots,B_g$ we choose the canonical one, see \cite[Section IIIa, §5]{Mum84}.

For hyperelliptic Riemann surfaces we define the Petersson norm of the modular discriminant $\|\varphi_{g}\|(X)$.
For every $\eta=\left[\gf{\eta'}{\eta''}\right]$ with $\eta',\eta''\in \frac{1}{2}\Z^{g}$ we set
$$\theta[\eta](z)=\exp\left(\pi i\ltrans{\eta'}\Omega\eta'+2\pi i\ltrans{\eta'}(\eta''+z)\right)\theta\left(\Omega\eta'+\eta''+z\right).$$
Further, as in \cite[Section IIIa, Definition 5.7]{Mum84} we define
\begin{align*}
	\eta_{2k-1}&=\left[\gf{\ltrans{(0,\dots,0,\frac{1}{2},0,\dots,0)}}{\ltrans{(\frac{1}{2},\dots,\frac{1}{2},0,0,\dots,0)}}\right]\text{ for } 1\le k\le g+1,\\
	\eta_{2k}&=\left[\gf{\ltrans{(0,\dots,0,\frac{1}{2},0,\dots,0)}}{\ltrans{(\frac{1}{2},\dots,\frac{1}{2},\frac{1}{2},0,\dots,0)}}\right]\text{ for } 1\le k\le g,
\end{align*}
where the non-zero entry in the top row occurs in the $k$-th position.
For a subset $S\subseteq\lbrace 1,\dots,2g+1\rbrace$ we set $\eta_{S}=\sum_{k\in S}\eta_{k}(\mod 1)$.
We denote by $\mathcal{T}$ the collection of all subsets of $\lbrace 1,\dots, 2g+1\rbrace$ of cardinality $g+1$.
Further, we set $U=\lbrace 1,3,\dots,2g+1\rbrace$ and we write $\circ$ for the symmetric difference.

We define the Petersson norm of the modular discriminant of $X$ by
$$\|\varphi_{g}\|(X)=(\det Y)^{2\binom{2g+1}{g+1}}\prod_{T\in\mathcal{T}}\left|\theta[\eta_{T\circ U}](0)\right|^{8}.$$
Further, we denote a modified version by
$$\|\Delta_{g}\|(X)=2^{-4(g+1)\binom{2g}{g-1}}\|\varphi_{g}\|(X).$$
For a discussion on the relation between $\|\varphi_{g}\|(X)$ and the discriminant of the polynomial $f$ in (\ref{hypdef})
we refer to Lockhart \cite{Loc94}.

As a direct consequence of the correspondence in \cite[IIIa Proposition 6.2]{Mum84} we obtain the following identity
\begin{align}\label{equdiscinfty}
\|\varphi_{g}\|(X)=\prod_{\lbrace j_{1},\dots,j_{g+1}\rbrace\in\mathcal{T}}\|\theta\|(W_{j_{1}}+\dots+W_{j_{g}}-W_{j_{g+1}})^{8}.
\end{align}
Since we can choose every Weierstraß point to be the point at infinity and the invariant $\|\varphi_{g}\|(X)$ does not depend on
this choice, we get by taking the product over all these choices
\begin{align}\label{equdiscweier}
\|\varphi_{g}\|(X)=\prod_{\lbrace j_{1},\dots,j_{g+1}\rbrace\in\mathcal{U}_{g+1}}\|\theta\|(W_{j_{1}}+\dots+W_{j_{g}}-W_{j_{g+1}})^{4},
\end{align}
where we denote by $\mathcal{U}_{k}$ the collection of all subsets of $\lbrace 1,\dots,2g+2\rbrace$ of cardinality $k$.
Due to de Jong we have the relations
\begin{align}\label{deltadejong}
\delta(X)=\tfrac{4(g-1)}{g^{2}}S_{1}(X)-\tfrac{3g-1}{2g}\tbinom{2g}{g-1}^{-1}\log\|\Delta_{g}\|(X)-8g\log2\pi,
\end{align}
see \cite[Corollary 1.7]{dJo05b}, and
\begin{align}\label{djr}
	\prod_{\lbrace j_{1},\dots,j_{g}\rbrace\in\mathcal{U}_{g}}\|J\|(W_{j_{1}},\dots,W_{j_{g}})=\pi^{\binom{2g+2}{g}g}\cdot\|\varphi_{g}\|(X)^{(g+1)/4},
\end{align}
see \cite[Theorem 9.1]{dJo07}.

\section{Integrals}\label{Sectionintegrals}
We give some relations between different integrals of the function $\log\|\theta\|$ and the Arakelov-Green function. Let $X$ be any compact and connected Riemann surface of genus $g\ge 2$ throughout this section.
\subsection{Integrals of theta functions}
\label{integraltheta}
In this subsection we establish two ways to write the invariant $H(X)$ as an integral over $X^g$.
For a fixed base point $Q\in X$ we define the maps
\begin{align*}
\Phi&\colon X^{g}\to \Pic_{g-1}(X),\quad(P_{1},\dots,P_{g})\mapsto (P_{1}+\dots+P_{g}-Q),\\
\Psi&\colon X^{g}\to \Pic_{g-1}(X),\quad(P_{1},\dots,P_{g})\mapsto (2P_{1}+P_{2}+\dots+P_{g-1}-P_{g}).
\end{align*}
Here and in the following, we call a map smooth if it is a smooth map of manifolds in the sense of \cite[Definition 1.2.1]{DFN85}, i.e. in local coordinates the first partial derivatives exist and are continuous. In particular, it has not to be flat as an algebraic morphism.
\begin{Pro}\label{propphipsi}
	The maps $\Phi$ and $\Psi$ are smooth and surjective. Moreover, $\Phi$ is generically of
	degree $g!$, $\Psi$ is generically of degree $4g!$,
	and the pullbacks of the volume form $\nu^{g}$ satisfy $\Psi^{*}\nu^{g}=4\Phi^{*}\nu^{g}$.
\end{Pro}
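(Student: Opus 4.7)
The plan is to establish the pullback identity $\Psi^{*}\nu^{g}=4\Phi^{*}\nu^{g}$ by a direct Jacobian computation, and then deduce the degree and surjectivity of $\Psi$ via standard degree theory. Both maps are smooth (in fact holomorphic) because they are built from sums and differences of the Abel--Jacobi map $I\colon X\to\Jac(X)$. For $\Phi$ I would factor it as $X^{g}\to\mathrm{Sym}^{g}(X)\to\Pic_{g}(X)\to\Pic_{g-1}(X)$, where the first arrow is the $g!$-fold symmetric quotient, the second is the Abel--Jacobi map, and the third is translation by $-Q$. The middle map is surjective because every degree-$g$ line bundle has a global section by Riemann--Roch, and it is generically injective because $h^{0}(L)=1$ for generic $L\in\Pic_{g}(X)$. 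Thus $\Phi$ is surjective of generic degree $g!$.

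For the pullback identity, I fix local coordinates $z_{j}$ near $P_{j}$ on the $j$-th factor of $X^{g}$ and write a basis $\omega_{i}=f_{i}(z)\,dz$ of $H^{0}(X,\Omega_{X}^{1})$. Computing the derivatives of the two maps in these coordinates gives
\begin{align*}
\Phi^{*}(dZ_{i})&=\sum_{j=1}^{g} f_{i}(P_{j})\,dz_{j},\\
\Psi^{*}(dZ_{i})&=2f_{i}(P_{1})\,dz_{1}+\sum_{j=2}^{g-1} f_{i}(P_{j})\,dz_{j}-f_{i}(P_{g})\,dz_{g}.
\end{align*}
Taking the wedge product over $i=1,\ldots,g$, the holomorphic Jacobian of $\Psi$ is obtained from that of $\Phi$ by rescaling the first column of $(f_{i}(P_{j}))_{i,j}$ by $2$ and the last by $-1$, hence equals $-2\det(f_{i}(P_{j}))$. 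Since $\nu^{g}$ is a positive real multiple of $dZ_{1}\wedge d\overline{Z}_{1}\wedge\cdots\wedge dZ_{g}\wedge d\overline{Z}_{g}$, comparing squared moduli of Jacobians yields $\Psi^{*}\nu^{g}=4\,\Phi^{*}\nu^{g}$.

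To conclude, both $X^{g}$ and $\Pic_{g-1}(X)$ are compact, connected, oriented real $2g$-manifolds, so for any smooth map $f$ between them the standard degree theorem gives $\int_{X^{g}}f^{*}\nu^{g}=\deg(f)\int_{\Pic_{g-1}(X)}\nu^{g}$. Applying this to $\Phi$ and to $\Psi$ and combining with the pullback identity yields $\deg(\Psi)=4\deg(\Phi)=4\,g!$. Since this is strictly positive, $\Psi$ must be surjective. The main technical point is the sign and coefficient bookkeeping in the Jacobian computation; deducing the degree and surjectivity of $\Psi$ from the integral-pullback identity sidesteps the harder direct fiber analysis, which would require understanding precisely which classes $L+P_{g}\in\Pic_{g}(X)$ admit an effective representative containing a double point.
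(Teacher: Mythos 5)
Your proposal is correct, and the heart of it --- the pullback identity $\Psi^{*}\nu^{g}=4\Phi^{*}\nu^{g}$ via the formulas $\Phi^{*}(dZ_{i})=\sum_{j}\omega_{i}(P_{j})$ and $\Psi^{*}(dZ_{i})=2\omega_{i}(P_{1})+\sum_{j=2}^{g-1}\omega_{i}(P_{j})-\omega_{i}(P_{g})$ --- is the same computation as in the paper, just organized through the holomorphic Jacobian determinant (first column scaled by $2$, last by $-1$, so $|{-2}|^{2}=4$) rather than by expanding $\nu^{g}$ into wedge monomials. Where you genuinely diverge is in the bookkeeping around degrees. For $\Phi$, the paper argues uniqueness of the representation $P_{1}+\dots+P_{g}-Q$ of a generic class via Riemann's theorem on the zeros of $\|\theta\|(\,\cdot\,)$ as a function of $Q$ (exactly $g$ zeros), whereas you factor through $\mathrm{Sym}^{g}(X)\to\Pic_{g}(X)$ and invoke Riemann--Roch plus $h^{0}(L)=1$ for generic $L$; both are standard and equally rigorous. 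For $\Psi$, the paper deduces surjectivity from the fact that $\Psi^{*}\nu^{g}\neq0$ forces the (compact, analytic) image to have full dimension $g$ inside the abelian variety $\Pic_{g-1}(X)$, and then reads the generic degree $4g!$ off the form identity; you instead apply $\int_{X^{g}}f^{*}\nu^{g}=\deg(f)\int_{\Pic_{g-1}(X)}\nu^{g}$ to both maps, getting $\deg(\Psi)=4\,g!>0$ and hence surjectivity. This is a clean route, with one implicit point you should be aware of: passing from the topological degree $4g!$ to the statement that the \emph{generic fibre} of $\Psi$ has $4g!$ points uses that $\Psi$ is holomorphic (and generically a local biholomorphism, which your nonvanishing Jacobian gives), so that every local degree is $+1$; with that remark made, your argument is complete and at the same level of rigor as the paper's.
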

\begin{proof}
	The maps are defined as linear combinations of the Abel--Jacobi map. Hence, they are smooth.
	Jacobi's inversion theorem gives the surjectivity of $\Phi$, see for example \cite[III.6]{FK80}. If we
	have divisors $P_{1}+\dots+P_{g}-Q$ and $R_{1}+\dots+R_{g}-Q$ representing the same class in $\Pic_{g-1}(X)\setminus\Theta$,
	then
	$$\|\theta\|(P_{1}+\dots+P_{g}-Q)=\|\theta\|(R_{1}+\dots+R_{g}-Q)$$
	has zeros in $P_{1},\dots,P_{g},R_{1},\dots,R_{g}$ as a function in $Q$. But it has exactly $g$ zeros
	counted	with multiplicities, see for example \cite[Theorem II.3.1]{Mum83}. Hence, $(P_{1},\dots,P_{g})$ and $(R_{1},\dots,R_{g})$
	coincide up to order. Thus, $\Phi$ is generically of degree $g!$.
	
	For the pullbacks of $dZ_{k}$ we get
	$$\Phi^{*}(dZ_{k})=\sum_{j=1}^{g}\omega_{k}(P_{j})\quad\text{and}\quad\Psi^{*}(dZ_{k})=2\omega_{k}(P_{1})-\omega_{k}(P_{g})+\sum_{j=2}^{g-1}\omega_{k}(P_{j}).$$
	Therefore, $\Phi^{*}\nu^{g}$ is a linear combination of terms of the form
	$$\omega_{\rho(1)}(P_{1})\wedge\bar{\omega}_{\tau(1)}(P_{1})\wedge\dots\wedge\omega_{\rho(g)}(P_{g})\wedge\bar{\omega}_{\tau(g)}(P_{g}),$$
	for two permutations $\rho,\tau\in \mathrm{Sym(g)}$. But $\Psi^{*}\nu^{g}$ is
	the same linear combination in the terms
	\begin{align*}
	&2\omega_{\rho(1)}(P_{1})\wedge2\bar{\omega}_{\tau(1)}(P_{1})\wedge\omega_{\rho(2)}(P_{2})\wedge\dots\wedge\bar{\omega}_{\tau(g-1)}(P_{g-1})\\
	&\wedge-\omega_{\rho(g)}(P_{g})\wedge-\bar{\omega}_{\tau(g)}(P_{g}).
	\end{align*}
	Thus, we have $\Psi^{*}\nu^{g}=4\Phi^{*}\nu^{g}$.
	
	Since $\Psi^*\nu^g$ is non-zero, the image of $\Psi$ has to be of dimension $g$. Hence, we have $\Psi(X^g)=\Pic_{g-1}(X)$, since the image is compact and $\Pic_{g-1}(X)$ is an abelian variety. The generic degree of $\Psi$ is $4g!$ by $\Psi^{*}\nu^{g}=4\Phi^{*}\nu^{g}$.
 \end{proof}

We can now compute $H(X)$ by pulling back the integral by $\Phi$ 
\begin{align}\label{intpb}
H(X)&=\tfrac{1}{(g!)^{2}}\int_{X^{g}}\log\|\theta\|(P_{1}+\dots+P_{g}-Q)\Phi^{*}\nu^{g}
\end{align}
and by pulling back the integral by $\Psi$
\begin{align}\label{intpb2}
H(X)&=\tfrac{1}{4(g!)^{2}}\int_{X^{g}}\log\|\theta\|(2P_{1}+P_{2}+\dots+P_{g-1}-P_g)\Psi^{*}\nu^{g}\nonumber \\
&=\tfrac{1}{(g!)^{2}}\int_{X^{g}}\log\|\theta\|(2P_{1}+P_{2}+\dots+P_{g-1}-P_{g})\Phi^{*}\nu^{g},
\end{align}
see for example \cite[Theorem 14.1.1]{DFN85}.

\subsection{Integrals of the Arakelov--Green function}
We compare integrals of the Arakelov--Green function with respect to the form $\Phi^*\nu^g$ with the Zhang--Kawazumi invariant $\varphi$. First, we need a general lemma.
For $k\le g$ and points $R_{1},\dots,R_{g-k},Q\in X$ we define the map
\begin{align*}
	\Phi_{k}\colon X^{k}&\to \Pic_{g-1}(X),\\
	(P_{1},\dots,P_{k})&\mapsto P_{1}+\dots+P_{k}+R_{1}+\dots+R_{g-k}-Q.
\end{align*}
The pullbacks of $dZ_{l}$ are $\Phi_{k}^{*}(dZ_{l})=\sum_{j=1}^{k}\omega_{l}(P_{j})$.
In particular, we have $g\mu=\Phi_{1}^{*}\nu$.
For the relation to $\Phi^{*}\nu^{g}$ we have the following lemma.
\begin{Lem}\label{lemformen}
The integral of $\Phi^{*}\nu^{g}$ over the variables $P_{k+1},\dots,P_{g}$
gives the following multiple of the form $\Phi_{k}^{*}\nu^{k}$ in the remaining variables $P_{1},\dots,P_{k}$:
$$\int_{(P_{k+1},\dots,P_{g})\in X^{g-k}}\Phi^{*}\nu^{g}(P_{1},\dots,P_{g})=\tfrac{g!(g-k)!}{k!}\Phi_{k}^{*}\nu^{k}(P_{1},\dots,P_{k}).$$
\end{Lem}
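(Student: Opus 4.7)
The plan is to reduce the computation to a universal bilinear kernel on $X\times X$ and then carry out a combinatorial sum over permutations. Define
$$\hat\beta(P,P')=\tfrac{i}{2}\sum_{l,m=1}^g(Y^{-1})_{lm}\omega_l(P)\wedge\bar\omega_m(P'),$$
so that $\hat\beta(P,P)=g\mu(P)$. Riemann's bilinear relation $\int_X\omega_l\wedge\bar\omega_m=-2iY_{lm}$ together with the symmetry of $Y$ yields the composition law $\int_X\hat\beta(P_0,P)\wedge\hat\beta(P,P_1)=-\hat\beta(P_0,P_1)$, and trivially $\int_X g\mu=g$. Since $\Phi^*dZ_l=\sum_j\omega_l(P_j)$, one directly computes $\Phi^*\nu=\sum_{i,j=1}^g\hat\beta(P_i,P_j)$, and analogously $\Phi_k^*\nu=\sum_{i,j=1}^k\hat\beta(P_i,P_j)$.

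A top $(g,g)$-form on $X^g$ forces every $P_i$ to carry bidegree $(1,1)$, so the $g$-th exterior power collapses to a permutation sum:
$$\Phi^*\nu^g=g!\sum_{\sigma\in S_g}\bigwedge_{i=1}^g\hat\beta(P_i,P_{\sigma(i)}),\qquad \Phi_k^*\nu^k=k!\sum_{\sigma'\in S_k}\bigwedge_{i=1}^k\hat\beta(P_i,P_{\sigma'(i)}),$$
the prefactors $g!$ and $k!$ arising from the free reordering of the $(1,1)$-factors $\hat\beta$. For a fixed $\sigma\in S_g$, the integral of $\bigwedge_i\hat\beta(P_i,P_{\sigma(i)})$ over $P_{k+1},\ldots,P_g$ is performed cycle-by-cycle: a cycle of $\sigma$ contained entirely in $\{k+1,\ldots,g\}$ of length $r$ contracts (by iterating the composition law and applying $\int g\mu=g$) to the scalar $(-1)^{r-1}g$, while a mixed cycle with $s$ integrated entries contracts to the induced cycle on its $[1,k]$-part carrying sign $(-1)^s$. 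Every $\sigma$ thus projects to a well-defined $\bar\sigma\in S_k$ with total weight $(-1)^{g-k-t_0(\sigma)}g^{t_0(\sigma)}$, where $t_0(\sigma)$ counts the cycles of $\sigma$ lying entirely in $\{k+1,\ldots,g\}$.

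It remains to collect the lifts of each fixed $\sigma'\in S_k$. Such a lift is determined by choosing an $n$-subset of $\{k+1,\ldots,g\}$ to be inserted into the $k$ insertion-slots of $\sigma'$'s cycles as ordered strings (yielding $\binom{g-k}{n}(k+n-1)!/(k-1)!$ possibilities), together with a permutation of the remaining $g-k-n$ elements. Weighting the latter choice by $(-g)^{t_0}$ and applying the rising-factorial identity $\sum_{t}c(m,t)(-g)^t=(-1)^m g!/(g-m)!$ for the unsigned Stirling numbers with $m=g-k-n$, the total weight over all lifts of $\sigma'$ reduces to $\frac{g!}{(k-1)!}\sum_{n=0}^{g-k}\binom{g-k}{n}\frac{(-1)^n}{k+n}$, and the Beta integral $\int_0^1 x^{k-1}(1-x)^{g-k}dx=\frac{(k-1)!(g-k)!}{g!}$ collapses this to $(g-k)!$, independent of $\sigma'$. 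This immediately yields
$$\int_{X^{g-k}}\Phi^*\nu^g=g!(g-k)!\sum_{\sigma'\in S_k}\bigwedge_{i=1}^k\hat\beta(P_i,P_{\sigma'(i)})=\tfrac{g!(g-k)!}{k!}\Phi_k^*\nu^k.$$

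The main obstacle is the bookkeeping of the final step: three nested combinatorial layers (subset choice, ordered distribution into slots, residual permutation) must combine with the sign and $g^{t_0}$-weight from the cycle contractions so that a Stirling and Beta computation produces the $\sigma'$-independent constant. A sanity check is provided by integrating both sides of the claim once more over $X^k$: both sides return $(g!)^2$, since $\int_{X^k}\Phi_k^*\nu^k=k!\cdot g!/(g-k)!$ by Poincar\'e's formula on the Jacobian, so the constant $g!(g-k)!/k!$ is forced.
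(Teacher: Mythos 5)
Your proof is correct, and it takes a genuinely different route from the paper's. The paper first applies the linear change of coordinates $Z\mapsto BZ$ with $B^2=Y^{-1}$, which normalizes $\nu$ to $\tfrac{i}{2}\sum_j dZ_j\wedge d\bar Z_j$ and makes the pulled-back differentials $\psi_j=I^*(dZ_j)$ orthonormal for the Hodge inner product. In that frame $\Phi^*\nu^g$ is a double permutation sum $\sum_{\rho,\tau}$, and orthogonality kills every term in which $\rho$ and $\tau$ disagree (as preimage functions) on an integrated variable, so the remaining count is essentially immediate. You instead keep the Hodge pairing explicit through the kernel $\hat\beta$, rewrite $(\Phi^*\nu)^g$ as the single permutation sum $g!\sum_{\sigma}\bigwedge_i\hat\beta(P_i,P_{\sigma(i)})$, contract cycles using the composition law $\int_X\hat\beta(\cdot,P)\wedge\hat\beta(P,\cdot)=-\hat\beta(\cdot,\cdot)$ (which indeed follows from $\int_X\omega_l\wedge\bar\omega_m=-2iY_{lm}$ and the symmetry of $Y$) together with $\int_X\hat\beta(P,P)=g$, and then sum over the fiber of $\sigma\mapsto\bar\sigma$ via the Stirling identity and the Beta integral. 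Each step checks out, including the cycle weight $(-1)^{g-k-t_0}g^{t_0}$ and the fact that the weighted fiber sum reduces to $(g-k)!$ independently of $\sigma'$; the endpoint checks $k=g$ and $k=0$ are also consistent. The paper's orthonormalization collapses all of this combinatorics into one orthogonality observation and is therefore the more economical proof; your version avoids choosing a preferred frame and lays bare the combinatorial structure of the fiber integral, which is more explanatory but substantially heavier for the statement at hand.
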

\begin{proof}
	By changing coordinates in $\C^{g}$ by a matrix $B$ with $B^{2}=Y^{-1}$, we can restrict to
	the case, where $\nu$ is of the form
		$\nu=\tfrac{i}{2}\sum_{j=1}^{g}dZ_{j}\wedge d\bar{Z}_{j}$ 
		and the pullbacks $\psi_{j}=I^{*}(dZ_{j})$ form an orthonormal basis of $H^0(X,\Omega_X^1)$.
	Taking the $g$-th power of $\nu$ yields
	$$\nu^{g}=\left(\tfrac{i}{2}\right)^{g} g! \cdot dZ_{1}\wedge d\bar{Z}_{1}\wedge\dots\wedge dZ_{g}\wedge d\bar{Z}_{g}.$$
	Since $\Phi^{*}(dZ_{j})=\sum_{k=1}^{g}\psi_{j}(P_{k})$, we get by pulling back $\nu^g$ with $\Phi$
	$$\Phi^{*}\nu^{g}=\left(\tfrac{i}{2}\right)^{g}g!\sum_{\rho,\tau\in\mathrm{Sym}(g)}\bigwedge_{m=1}^{g}\psi_{m}(P_{\rho(m)})\wedge\bar{\psi}_{m}(P_{\tau(m)}).$$
	
	Since the $\psi_{j}$'s are orthonormal, only the summands with $\rho(j)=\tau(j)$ will remain after integrating over $P_{j}$. Hence, 
	we can reduce to sum over permutations in $\mathrm{Sym}(k)$:
	\begin{align*}
		&\int_{(P_{k+1},\dots,P_{g})\in X^{g-k}}\Phi^{*}\nu^{g}(P_{1},\dots,P_{g})\\
		=&(g-k)!\left(\tfrac{i}{2}\right)^{k}g!\sum_{1\le j_{1}<\dots<j_{k}\le g}\sum_{\rho,\tau\in\mathrm{Sym}(k)}\bigwedge_{m=1}^{k}\psi_{j_{m}}(P_{\rho(m)})\wedge\bar{\psi}_{j_{m}}(P_{\tau(m)}),
	\end{align*}
	where the factor $(g-k)!$ comes from the permutations of the forms in $P_{j}$ for all $k<j\le g$.
	On the other hand, the pullback of
	$$\nu^{k}=\left(\tfrac{i}{2}\right)^{k} k! \sum_{1\le j_{1}<\dots<j_{k}\le g} dZ_{j_{1}}\wedge d\bar{Z}_{j_{1}}\wedge\dots\wedge dZ_{j_{k}}\wedge d\bar{Z}_{j_{k}}$$
	yields
	$$\Phi^{*}\nu^{k}=\left(\tfrac{i}{2}\right)^{k}k!\sum_{1\le j_{1}<\dots<j_{k}\le g}\sum_{\rho,\tau\in\mathrm{Sym}(k)}\bigwedge_{m=1}^{k}\psi_{j_{m}}(P_{\rho(m)})\wedge\bar{\psi}_{j_{m}}(P_{\tau(m)}).$$
Now the lemma follows by comparing these formulas.
 \end{proof}

Next, we calculate integrals of the Arakelov--Green function.
As a consequence of Lemma \ref{lemformen}, we get for all $k$
\begin{align*}
	\tfrac{1}{(g!)^{2}}\int_{X^{g}}g(P_{k},Q)\Phi^{*}\nu^{g}(P_{1},\dots,P_{g})=\int_{X}g(P_{k},Q)\mu(P_{k})=0.
\end{align*}
For the terms $g(P_{k},P_{l})$ we get the following lemma relating their integrals
to the Zhang--Kawazumi invariant $\varphi(X)$.
\begin{Lem}\label{lemgsigmaint}
	For $k\neq l$ we have
	$$\tfrac{1}{(g!)^{2}}\int_{X^{g}}g(P_{k},P_{l})\Phi^{*}\nu^{g}(P_1,\dots,P_g)=\tfrac{1}{2g(g-1)}\cdot\varphi(X).$$
\end{Lem}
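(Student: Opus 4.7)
The plan is to reduce the $g$-fold integral to a two-fold integral via Lemma \ref{lemformen}, then recognise the resulting form on $X^2$ as a combination of $h_\Delta^2$ (whose integral against $g$ is $\varphi(X)$ by definition) and terms that integrate to zero.

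Since $\Phi^*\nu^g$ is invariant under permutations of $P_1,\dots,P_g$, it suffices to treat $k=1$, $l=2$. Applying Lemma \ref{lemformen} with $k=2$ (for any auxiliary points $R_1,\dots,R_{g-2}\in X$) to integrate out $P_3,\dots,P_g$ gives
$$\tfrac{1}{(g!)^2}\int_{X^g}g(P_1,P_2)\,\Phi^*\nu^g=\tfrac{1}{2g(g-1)}\int_{X^2}g(P_1,P_2)\,\Phi_2^*\nu^2,$$
using $\tfrac{(g-2)!}{2\cdot g!}=\tfrac{1}{2g(g-1)}$.

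Next, I would compute $\Phi_2^*\nu$ explicitly. Performing the change of coordinates from the proof of Lemma \ref{lemformen} so that $\psi_j=I^*(dZ_j)$ is orthonormal and $\nu=\tfrac{i}{2}\sum_j dZ_j\wedge d\bar Z_j$, and using $\Phi_2^*(dZ_j)=\psi_j(P_1)+\psi_j(P_2)$ together with (\ref{hdelta}), a direct expansion yields
$$\Phi_2^*\nu=(g+1)\bigl(\mu(P_1)+\mu(P_2)\bigr)-h_\Delta.$$
Squaring this identity and exploiting that on the one-dimensional factor $X$ one has $\mu(P_i)^2=0$ and $\mu(P_i)\wedge\alpha=0$ for every $(1,0)$ or $(0,1)$ form $\alpha$ in the variable $P_i$ (which in particular forces $\mu(P_i)\wedge h_\Delta=\mu(P_1)\wedge\mu(P_2)$), I expect
$$(\Phi_2^*\nu)^2=h_\Delta^2+2(g^2-1)\,\mu(P_1)\wedge\mu(P_2).$$

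Finally, I integrate against $g(P_1,P_2)$. The first summand contributes exactly $\varphi(X)$ by the definition (\ref{kzdef}), while the second summand vanishes thanks to the normalisation property (G4) of the Arakelov--Green function, $\int_X g(P_1,P_2)\mu(P_1)=0$. Combining with the prefactor $\tfrac{1}{2g(g-1)}$ yields the stated equality. The only delicate step is the algebraic identity for $(\Phi_2^*\nu)^2$; the cross-term cancellation is mechanical once (\ref{hdelta}) is in hand and the bidegree restrictions on $X$ are tracked carefully.
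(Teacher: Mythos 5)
Your proof is correct and follows essentially the same route as the paper: reduce to a two-fold integral via Lemma \ref{lemformen}, identify $\Phi_2^*\nu^2 = h_\Delta^2 + 2(g^2-1)\mu(P_1)\mu(P_2)$, and conclude from (\ref{kzdef}) and (G4). The only cosmetic difference is that you derive this identity by first establishing the clean rank-one relation $\Phi_2^*\nu=(g+1)(\mu(P_1)+\mu(P_2))-h_\Delta$ and squaring with bidegree cancellations, whereas the paper expands $\Phi_2^*\nu^2$ directly in the orthonormal basis $\psi_j$ and matches it term-by-term against $h_\Delta^2$ and $\mu(P_1)\mu(P_2)$.
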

\begin{proof}	
	As in the proof of Lemma \ref{lemformen}, we change coordinates in $\C^{g}$, such that 
		$\nu=\tfrac{i}{2}\sum_{j=1}^{g}dZ_{j}\wedge d\bar{Z}_{j}$
	and the $\psi_{j}=I^{*}(dZ_{j})$ form an orthonormal basis
	of $H^{0}(X,\Omega_{X}^{1})$.
	We have
	$\nu^{2}=-\tfrac{1}{4}\sum_{p\neq q}dZ_{p}\wedge d\bar{Z}_{p}\wedge dZ_{q}\wedge d\bar{Z}_{q}$ and for the pullback by $\Phi_2$ we obtain
	$\Phi^{*}_{2}(dZ_{j})=\psi_{j}(P_{1})+\psi_{j}(P_{2})$.
	Hence, we get for the pullback of $\nu^{2}$ after some calculations
	\begin{align*}
		\Phi^{*}_{2}\nu^{2}=\tfrac{1}{2}\sum_{p\neq q}(&\psi_{p}(P_{1})\wedge\bar{\psi}_{q}(P_{1})\wedge\psi_{q}(P_{2})\wedge\bar{\psi}_{p}(P_{2})\\
		-&\psi_{p}(P_{1})\wedge\bar{\psi}_{p}(P_{1})\wedge\psi_{q}(P_{2})\wedge\bar{\psi}_{q}(P_{2})).
	\end{align*}
	Since $\mu=\tfrac{i}{2g}\sum_{j=1}^{g}\psi_{j}\wedge\bar{\psi}_{j}$, we get on the other hand
	$$\mu(P_{1})\mu(P_{2})=-\tfrac{1}{4g^{2}}\sum_{p,q=1}^{g}\psi_{p}(P_{1})\wedge\bar{\psi}_{p}(P_{1})\wedge\psi_{q}(P_{2})\wedge\bar{\psi}_{q}(P_{2})$$
	and by (\ref{hdelta})
	$$h^{2}_{\Delta}=2\mu(P_{1})\mu(P_{2})+\tfrac{1}{2}\sum_{p,q=1}^{g}\left(\psi_{p}(P_{1})\wedge\bar{\psi}_{q}(P_{1})\wedge\psi_{q}(P_{2})\wedge\bar{\psi}_{p}(P_{2})\right).$$
	Putting this together, we obtain $h^{2}_{\Delta}=\Phi^{*}_{2}\nu^{2}-2(g^{2}-1)\mu(P_{1})\mu(P_{2}).$
	By (G4) in Section \ref{invariant} the integral $\int_{X^{2}}g(P_{1},P_{2})\mu(P_{1})\mu(P_{2})$ vanishes.
	Using the defining equation (\ref{kzdef}) for $\varphi(X)$, we obtain
	$$\tfrac{1}{2g(g-1)}\varphi(X)=\tfrac{1}{2g(g-1)}\int_{X^{2}}g(P_{1},P_{2})\Phi^{*}_{2}\nu^{2}=\tfrac{1}{(g!)^{2}}\int_{X^{g}}g(P_{1},P_{2})\Phi^{*}\nu^{g},$$
	where the latter equality is due to Lemma \ref{lemformen}. Now the lemma follows by symmetry.
 \end{proof}

The function $g(\sigma(P_1+\dots+P_{g-1}),P_g)$ is defined on a dense subset of $X^g$. Hence, we can compute the integral over $X^g$ and we obtain the following relation.
\begin{Lem}\label{lemgsigmavarphi}
It holds
$$\frac{1}{(g!)^2}\int_{X^g}g(\sigma(P_1+\dots+P_{g-1}),P_g)\Phi^*\nu^g=\tfrac{1}{2g}\varphi(X).$$
\end{Lem}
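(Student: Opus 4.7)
The strategy is to exploit that $\sigma$ on $\Pic_{g-1}(X)$ corresponds, under the identification $\Pic_{g-1}(X)\cong\Jac(X)$ via the theta characteristic $\alpha_X$, to the negation $z\mapsto -z$, which enables a measure-preserving change of variables reducing the integral to the one already computed in Lemma \ref{lemgsigmaint}.

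First I would introduce the birational involution $\Psi\colon X^g\to X^g$ (defined on a dense Zariski-open subset) sending $(P_1,\dots,P_{g-1},P_g)$ to $(Q_1,\dots,Q_{g-1},P_g)$, where $Q_1+\dots+Q_{g-1}$ is the unique effective representative of $\sigma(P_1+\dots+P_{g-1})$. The composition $\Phi\circ\Psi$ then sends $(P_1,\dots,P_g)$ to the class $K_X-(P_1+\dots+P_{g-1})+P_g-Q$, which in $\Jac(X)$ equals $-\Phi(P_1,\dots,P_g)+2(P_g-Q)$. By translation-invariance of $\nu^g$, the pullback $(\Phi\circ\Psi)^*\nu^g$ agrees with the pullback by the linearization of $-\Phi$. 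This linearization differs from that of $\Phi$ only by negation on $g-1$ of the tangent summands $T_{P_j}X$; since a complex-linear map between $g$-dimensional complex vector spaces pulls back a top $(g,g)$-form by a factor of $|\det|^2$, this sign change contributes only $(-1)^{g-1}$ to the determinant and hence leaves the pullback of $\nu^g$ invariant. One concludes $\Psi^*\Phi^*\nu^g=\Phi^*\nu^g$.

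Next I would apply the change of variables under the measure-preserving involution $\Psi$, and use that $\sigma^2=\mathrm{id}$, to obtain
$$\int_{X^g}g(\sigma(P_1+\dots+P_{g-1}),P_g)\Phi^*\nu^g=\int_{X^g}g(P_1+\dots+P_{g-1},P_g)\Phi^*\nu^g.$$
Expanding $g(P_1+\dots+P_{g-1},P_g)=\sum_{j=1}^{g-1}g(P_j,P_g)$, using the symmetry of $\Phi^*\nu^g$ in the first $g-1$ variables, and applying Lemma \ref{lemgsigmaint} with $k=1$ and $l=g$, the right-hand side equals $(g-1)\cdot\tfrac{(g!)^2}{2g(g-1)}\varphi(X)=\tfrac{(g!)^2}{2g}\varphi(X)$. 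Dividing by $(g!)^2$ yields the claimed formula.

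The main obstacle will be to justify the change of variables rigorously despite $\Psi$ being only a birational map, regular outside the preimage of the singular locus of $\Theta$. Since this exceptional locus has measure zero and the integrand has only mild logarithmic singularities, the integral is absolutely convergent and the change-of-variables formula applies on the complement.
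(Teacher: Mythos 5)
Your proof is correct and takes essentially the same route as the paper: the paper's argument works on $\widetilde{X^{(g-1)}}\times X$ where the involution $\sigma$ is an honest automorphism (so the change of variables needs no birational caveats), but the core ideas — that $\sigma$ acts by $-1$ on the tangent spaces of $\Theta^{sm}$, that the resulting sign $(-1)^{g-1}$ disappears upon taking $|\det|^2$ to pull back $\nu^g$, and that one then expands and applies Lemma \ref{lemgsigmaint} — are identical. One small imprecision: since $\Phi\circ\Psi=-\Phi+2(P_g-Q)$ and $2(P_g-Q)$ is not a constant, translation-invariance does not literally reduce $(\Phi\circ\Psi)^*\nu^g$ to the pullback by $-\Phi$; however, your actual computation (that the linearization negates exactly the $g-1$ tangent summands $T_{P_j}X$ for $j<g$, leaving $T_{P_g}X$ unchanged) is the correct one and yields the right conclusion.
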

\begin{proof}
Denote by $X^{(g-1)}$ the $(g-1)$-th symmetric power of $X$. We have the canonical map
$$\widetilde{\Phi}_{\Theta}\colon X^{(g-1)}\to \Theta,\quad (P_1,\dots,P_{g-1})\to P_1+\dots+P_{g-1}.$$
We denote $\widetilde{\Phi}_{\Theta}^{-1}(\Theta^{sm})=\widetilde{X^{(g-1)}}$. The map $\widetilde{\Phi}_{\Theta}$ induces an isomorphism $\widetilde{X^{(g-1)}}\cong\Theta^{sm}$. In particular, we obtain the involution $\sigma$ also on $\widetilde{X^{(g-1)}}$.
We define the map
\begin{align*}
\widetilde{\Phi}\colon \widetilde{X^{(g-1)}}\times X&\to \Pic_{g-1}(X),\\
((P_1,\dots,P_{g-1}),P_g)&\mapsto P_1+\dots+P_g-Q
\end{align*}
and the map
\begin{align*}
\widetilde{\Phi}_{\sigma}\colon \widetilde{X^{(g-1)}}\times X&\to \Pic_{g-1}(X),\\
((P_1,\dots,P_{g-1}),P_g)&\mapsto \sigma(P_1+\dots+P_{g-1})+P_g-Q.
\end{align*}
A direct computation as in the proof of Proposition \ref{propphipsi} gives $\widetilde{\Phi}^*\nu^g=\widetilde{\Phi}_{\sigma}^*\nu^g$. Since it holds $\widetilde{\Phi}=\widetilde{\Phi}_{\sigma}\circ(\sigma\times\id_X)$ and $(\sigma\times\id_X)$ is an automorphism, we can compute
\begin{align*}
&\frac{1}{(g!)^2}\int_{X^g}g(\sigma(P_1+\dots+P_{g-1}),P_g)\Phi^*\nu^g\\
=&\frac{1}{g\cdot g!}\int_{\widetilde{X^{(g-1)}}\times X}g(\sigma(P_1+\dots+P_{g-1}),P_g)\widetilde{\Phi}^*\nu^g\\
=&\frac{1}{g\cdot g!}\int_{\widetilde{X^{(g-1)}}\times X}g(P_1+\dots+P_{g-1},P_g)\widetilde{\Phi}_{\sigma}^*\nu^g\\
=&\frac{1}{g\cdot g!}\int_{\widetilde{X^{(g-1)}}\times X}g(P_1+\dots+P_{g-1},P_g)\widetilde{\Phi}^*\nu^g\\
=&\frac{1}{(g!)^2}\sum_{j=1}^{g-1}\int_{X^g}g(P_j,P_g)\Phi^*\nu^g.
\end{align*} 
By Lemma \ref{lemgsigmaint} this equals $\tfrac{1}{2g}\varphi(X)$.
 \end{proof}

\section{The hyperelliptic case}
In this section we restrict to the case of hyperelliptic Riemann surfaces. In particular, we obtain an explicit description of the invariant $\delta$ in this case. Therefore, let $X$ be any hyperelliptic Riemann surface $X$ of genus $g\ge 2$ throughout this section.
\subsection{Decomposition of theta functions}\label{secdecomposition}
We give a decomposition of $\log\|\theta\|$ into a sum of Arakelov--Green functions and a certain invariant of $X$ and we state some consequences.
\begin{Pro}\label{equhypfactor}
The function $\log\|\theta\|$ decomposes in the following way:
	$$\log\|\theta\|(P_{1}+\dots+P_{g}-Q)=S_{g}(X)+\sum_{j=1}^{g}g(P_{j},Q)+\sum_{k<l}g(\sigma(P_{k}),P_{l}).$$
\end{Pro}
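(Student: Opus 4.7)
The approach is to show that the difference
\[
F(P_1,\ldots,P_g,Q) := \log\|\theta\|(P_1+\cdots+P_g-Q) - \sum_{j=1}^g g(P_j,Q) - \sum_{k<l} g(\sigma(P_k),P_l)
\]
extends to a smooth function on $X^g\times X$ that is harmonic in each variable separately, hence globally constant, and then to identify the constant by integration. The key geometric input for the singularity analysis is the classical description of the theta divisor on a hyperelliptic Jacobian: the pullback of $\Theta$ under the Abel--Jacobi map $(P_1,\ldots,P_g,Q)\mapsto P_1+\cdots+P_g-Q$ is set-theoretically $\bigcup_j \{P_j=Q\}\cup \bigcup_{k<l}\{\sigma(P_k)=P_l\}$, since by a classical hyperelliptic lemma a degree-$g$ effective divisor is special precisely when two of its points are hyperelliptic conjugates. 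A transversality check along each component shows that each appears with multiplicity one in the divisorial pullback, so the simple log-poles of $\log\|\theta\|$ along $\Theta$ cancel exactly against those of the two Green-function sums on the right-hand side, and $F$ extends smoothly to all of $X^g\times X$.

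Next, I would verify $\partial_v\bar\partial_v F=0$ for each variable $v\in\{P_1,\ldots,P_g,Q\}$ using the curvature properties $(\theta 3)$ and (G3). From $\partial\bar\partial\log\|\theta\|=\pi i\,\nu$ together with $\mu=\tfrac{1}{g}I^*\nu$, the left-hand side contributes $g\pi i\,\mu(P_i)$ to $\partial_{P_i}\bar\partial_{P_i}$. On the right-hand side, $g(P_i,Q)$ contributes $\pi i\,\mu(P_i)$, and the $g-1$ terms of the form $g(\sigma(P_k),P_l)$ involving $P_i$ (either as $P_l$ or through $\sigma$) each contribute $\pi i\,\mu(P_i)$, using that the hyperelliptic involution acts as $-1$ on $H^0(X,\Omega_X^1)$ and hence preserves $\mu$. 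Summing gives $g\pi i\,\mu(P_i)$, matching the left-hand side; the computation in $Q$ is symmetric. Since $F$ is harmonic in each variable separately on the compact Riemann surface $X$, it is constant in each variable, and therefore globally constant on $X^g\times X$.

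To identify this constant, integrate the decomposition against $\mu(P_1)\cdots\mu(P_g)$ over $X^g$. The left-hand side becomes $S_g(X)$ by definition of $S_g$. The sum $\sum_j\int g(P_j,Q)\,\mu(P_j)$ vanishes by (G4). For the second sum, the substitution $P_k'=\sigma(P_k)$ combined with $\sigma^*\mu=\mu$ reduces each integral to $\int_{X^2}g(P_k',P_l)\mu(P_k')\mu(P_l)=0$, again by (G4). Therefore the constant equals $S_g(X)$. The main obstacle is establishing the multiplicity-one structure of the pullback divisor along each component; once this is in place, the rest is a routine application of the defining curvature equations of $\|\theta\|$ and $G$ together with the normalization of $g$.
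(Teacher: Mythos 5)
Your strategy is the same as the paper's: cancel the logarithmic singularities of $\log\|\theta\|$ against those of the Arakelov--Green functions, check harmonicity in each variable separately via ($\theta$3) and (G3) (using that $\sigma$ acts as $-1$ on $H^0(X,\Omega^1_X)$, so $\sigma^*\mu=\mu$), conclude that the difference is constant, and identify the constant as $S_g(X)$ by integrating against $\mu(P_1)\cdots\mu(P_g)$ and using (G4). Your Laplacian bookkeeping ($g$ terms of $\pi i\,\mu(P_i)$ on each side, and the symmetric computation in $Q$) and the identification of the constant are correct.

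The one place you stop short is exactly the point you yourself call the main obstacle: you assert, but do not prove, that each component $\lbrace P_j=Q\rbrace$ and $\lbrace\sigma(P_k)=P_l\rbrace$ occurs with multiplicity one in the pullback of $\Theta$, and the cancellation of singularities rests entirely on this. The paper obtains it (together with the set-theoretic identification, so that your appeal to the classical lemma on special divisors is not even needed) in one stroke by working one variable at a time: fix $P_2,\dots,P_g,Q$ generic; by Riemann's theorem \cite[Theorem II.3.1]{Mum83} the function $P_1\mapsto\|\theta\|(P_1+\dots+P_g-Q)$ is not identically zero and has exactly $g$ zeros counted with multiplicity, while the $g$ pairwise distinct points $Q,\sigma(P_2),\dots,\sigma(P_g)$ are zeros because $P+\sigma(P)\sim 2\cdot\infty$ makes the divisor linearly equivalent to an effective one there; hence these exhaust the zeros and each is simple, so the difference is bounded in $P_1$ and extends harmonically, and likewise in every other variable. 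If you prefer your global formulation on $X^g\times X$, you must supply this count (or an explicit transversality computation along each component); as written, the multiplicity-one claim is the crux of the proof and is left unargued.
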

\begin{proof}
	We consider 
	\begin{align}\label{constantfunction}
		\alpha(P_{1})=\log\|\theta\|(P_{1}+\dots+P_{g}-Q)-\sum_{j=1}^{g}g(P_{j},Q)-\sum_{k<l}g(\sigma(P_{k}),P_{l})
	\end{align}
	as a function in the variable $P_{1}$ by fixing the remaining points, such that each summand on
	the right hand side is well defined for at least some choices of $P_{1}$. 
	For any point $P\in X$ the divisors $P+\sigma(P)$ and $2\cdot\infty$ are linearly equivalent, see \cite[Section IIIa.§2.]{Mum84}.
	Hence, $P_{1}+\dots+P_{g}-Q$
	is effective if $P_{1}=\sigma(P_{k})$ for some $k\neq1$ or $P_{1}=Q$.
	But $\|\theta\|(P_{1}+\dots+P_{g}-Q)$
	has exactly $g$ zeros as a function in $P_{1}$, see \cite[Theorem II.3.1]{Mum83}.
	Therefore, $\alpha(P_{1})$ has no poles. Further, we get
	$$\partial\bar{\partial}\alpha(P_{1})=\pi i I^{*}\nu(P_{1})-g\pi i \mu(P_{1})=0$$
	by ($\theta$3) and (G3) in Section \ref{invariant}.
	Hence, $\alpha(P_{1})$ is a harmonic function on a compact space. Thus, it
	is constant. Analogously, we can show, that the expression (\ref{constantfunction})
	is constant as a function in $P_{2},\dots,P_{g}$ or $Q$.
	Integrating with $\mu(P_{1})\dots\mu(P_{g})$ shows that $\alpha=S_{g}(X)$ since the Arakelov--Green functions vanish
	by (G4).
 \end{proof}

As a corollary, we obtain a similar decomposition
for the function $\log\|J\|$.
\begin{Cor}\label{jfactor}
The function $\log\|J\|$ decomposes in the following way:
$$\log\|J\|(P_{1},\dots,P_{g})=B(X)+\sum_{k<l}g(P_{k},P_{l})+(g-1)\sum_{k<l}g(P_{k},\sigma(P_{l})).$$
\end{Cor}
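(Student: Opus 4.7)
The plan is to combine Guàrdia's formula (\ref{deltagua}) with the decomposition of $\log\|\theta\|$ from Proposition \ref{equhypfactor}. Taking logarithms in (\ref{deltagua}) and solving for $\log\|J\|$ yields
\begin{align*}
\log\|J\|(P_1,\dots,P_g)
&= -\tfrac{1}{8}\delta(X) + (g-1)\log\|\theta\|(P_1+\dots+P_g-Q) \\
&\quad + \sum_{j<k}g(P_j,P_k) - (g-1)\sum_{j=1}^g g(P_j,Q).
\end{align*}
This identity is valid on a dense open subset of $X^g$ (where all factors are defined) and, since both sides are continuous, suffices to determine the relation everywhere.

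Next I would substitute the decomposition from Proposition \ref{equhypfactor} into the middle term. The factors $(g-1)\sum_j g(P_j,Q)$ cancel exactly, and I am left with
\begin{align*}
\log\|J\|(P_1,\dots,P_g) = -\tfrac{1}{8}\delta(X) + (g-1)S_g(X) + \sum_{j<k}g(P_j,P_k) + (g-1)\sum_{k<l}g(\sigma(P_k),P_l).
\end{align*}
Finally, relation (\ref{deltaab}) gives $B(X) = (g-1)S_g(X) - \tfrac{1}{8}\delta(X)$, so the constant term is precisely $B(X)$.

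The only remaining point is cosmetic: the statement of the corollary records the last sum as $\sum_{k<l} g(P_k,\sigma(P_l))$ rather than $\sum_{k<l} g(\sigma(P_k),P_l)$. These agree because the canonical form $\mu$ on a hyperelliptic Riemann surface is $\sigma$-invariant, hence so is the Arakelov--Green function: $G(\sigma(P),\sigma(Q))=G(P,Q)$. Combined with the symmetry $G(P,Q)=G(Q,P)$, this gives $g(\sigma(P_k),P_l)=g(P_k,\sigma(P_l))$ term by term, matching the form in the statement. I expect no substantial obstacle here; the proof is essentially a direct substitution, with the only subtlety being this last identification via the hyperelliptic symmetry.
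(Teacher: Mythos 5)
Your proposal is correct and follows exactly the paper's route: substitute the decomposition of Proposition \ref{equhypfactor} into Guàrdia's formula (\ref{deltagua}) and eliminate $\delta(X)$ via (\ref{deltaab}), with the final swap $g(\sigma(P_k),P_l)=g(P_k,\sigma(P_l))$ being immediate from the $\sigma$-invariance of the Arakelov--Green function as you note.
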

\begin{proof}
	We apply the decomposition of $\log\|\theta\|$ in Proposition \ref{equhypfactor} to
	formula (\ref{deltagua}) and we eliminate $\delta(X)$ by (\ref{deltaab}).
	This gives the corollary.
 \end{proof}

Another application of the decomposition in Proposition \ref{equhypfactor} is the following relation of invariants of $X$.
\begin{Cor}\label{discab}
	We have
		\begin{align*}
			\log\|\varphi_{g}\|(X)=4\tbinom{2g}{g-1}\left(\tfrac{g+1}{g}B(X)-(g-1)S_{g}(X)-(g+1)\log\pi\right).
		\end{align*}
\end{Cor}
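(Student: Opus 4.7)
The plan is to combine formulas (\ref{djr}) and (\ref{equdiscweier}) with the decompositions of Proposition \ref{equhypfactor} and Corollary \ref{jfactor} evaluated at Weierstraß points. Since $\sigma$ fixes every Weierstraß point, Proposition \ref{equhypfactor} specialises to
$$\log\|\theta\|(W_{j_1}+\cdots+W_{j_g}-W_{j_{g+1}}) = S_g(X) + \sum_{1\le a<b\le g+1} g(W_{j_a},W_{j_b}),$$
which is symmetric in all $g+1$ indices and hence is a well-defined function of the subset $\{j_1,\dots,j_{g+1}\}\in\mathcal{U}_{g+1}$. Corollary \ref{jfactor} specialises analogously to
$$\log\|J\|(W_{j_1},\ldots,W_{j_g}) = B(X) + g\sum_{1\le a<b\le g} g(W_{j_a},W_{j_b}),$$
because the two double sums in Corollary \ref{jfactor} coincide when $\sigma$ acts trivially.

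Next I would introduce the auxiliary quantity $T=\sum_{1\le a<b\le 2g+2} g(W_a,W_b)$ and sum the two identities over the appropriate families of index sets. Every unordered pair of indices is contained in exactly $\tbinom{2g}{g-1}$ subsets of size $g+1$ and in exactly $\tbinom{2g}{g-2}$ subsets of size $g$; combining this with the logarithm of (\ref{equdiscweier}) gives
$$\tfrac{1}{4}\log\|\varphi_g\|(X) = \tbinom{2g+2}{g+1}S_g(X) + \tbinom{2g}{g-1}T,$$
while combining it with the logarithm of (\ref{djr}) gives
$$\tfrac{g+1}{4}\log\|\varphi_g\|(X) + g\tbinom{2g+2}{g}\log\pi = \tbinom{2g+2}{g}B(X) + g\tbinom{2g}{g-2}T.$$

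Finally, I would eliminate the unknown $T$ between the two linear equations and solve for $\log\|\varphi_g\|(X)$ in terms of $B(X)$, $S_g(X)$ and $\log\pi$. Using the elementary identities $\tbinom{2g}{g-2}/\tbinom{2g}{g-1}=(g-1)/(g+2)$ and $\tbinom{2g+2}{g+1}=\tfrac{2(2g+1)}{g}\tbinom{2g}{g-1}$, together with the analogous reduction $\tfrac{2(g+2)}{2g+1}\tbinom{2g+2}{g}=\tfrac{4(g+1)}{g}\tbinom{2g}{g-1}$, the coefficient $\tfrac{g+1}{4}-\tfrac{g(g-1)}{4(g+2)}=\tfrac{2g+1}{2(g+2)}$ in front of $\log\|\varphi_g\|(X)$ simplifies, and all binomial coefficients reduce to multiples of $\tbinom{2g}{g-1}$, matching the claimed expression. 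The only genuine effort is bookkeeping among binomial coefficients; there is no conceptual obstacle beyond the two decomposition formulas already in hand.
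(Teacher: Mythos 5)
Your proposal is correct and follows essentially the same route as the paper: apply the decomposition of Proposition \ref{equhypfactor} to (\ref{equdiscweier}) and that of Corollary \ref{jfactor} to (\ref{djr}), then eliminate the sum $\sum_{k<l}g(W_k,W_l)$ between the two resulting linear relations. The intermediate identities you obtain are exactly the paper's equations (\ref{discwkwl}) and (\ref{discbwkwl}), and your binomial bookkeeping (which the paper leaves implicit) checks out.
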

\begin{proof}
	Applying the decomposition of Proposition \ref{equhypfactor} to (\ref{equdiscweier}) gives:
	\begin{align}\label{discwkwl}
			\log\|\varphi_{g}\|(X)=4\tbinom{2g+2}{g+1}S_{g}(X)+4\tbinom{2g}{g-1}\sum_{1\le k<l\le 2g+2} g(W_{k},W_{l}).
		\end{align}
	In the same way, the decomposition of Corollary \ref{jfactor} applied to (\ref{djr}) yields:
	\begin{align}\label{discbwkwl}
		&\tbinom{2g+2}{g}g\log\pi+\tfrac{g+1}{4}\log\|\varphi_{g}\|(X)\nonumber
		\\ =&\tbinom{2g+2}{g}B(X)+\tbinom{2g}{g-2}g\sum_{1\le k<l\le 2g+2}g(W_{k},W_{l}).
	\end{align}
	Now the lemma follows by combining (\ref{discwkwl}) and (\ref{discbwkwl}).
 \end{proof}

\subsection{Comparison of integrals}\label{secintegralhyper}
In this subsection we prove the following relation between integrals of $\log\|\theta\|$.
\begin{Thm}\label{mainthm}
	It holds $(g-1)H(X)=gS_{g}(X)-S_{1}(X)$.
\end{Thm}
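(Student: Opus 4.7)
My plan is to compute $H(X)$ in two different ways by exploiting the two pullback formulas (\ref{intpb}) and (\ref{intpb2}), apply the hyperelliptic decomposition of $\log\|\theta\|$ from Proposition \ref{equhypfactor} in each, and compare. Equating the two resulting expressions will eliminate $\varphi(X)$ and isolate a linear relation among $H(X)$, $S_g(X)$ and $S_1(X)$.

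For the first computation I start from (\ref{intpb}) and apply Proposition \ref{equhypfactor} to $\log\|\theta\|(P_1+\dots+P_g-Q)$. The constant $S_g(X)$ survives with coefficient $1$ because $\frac{1}{(g!)^2}\int_{X^g}\Phi^*\nu^g=1$. Each term $g(P_j,Q)$ integrates to zero after reduction via Lemma \ref{lemformen} and (G4). For the $\binom{g}{2}$ cross terms $g(\sigma(P_k),P_l)$ with $k<l$, I use that in the hyperelliptic case $\sigma^*\psi_j=-\psi_j$ on holomorphic one-forms, so $\Phi^*\nu^g$ is invariant under the substitution $P_k\mapsto\sigma(P_k)$ on a single factor; this converts the term into $g(P_k,P_l)$, and Lemma \ref{lemgsigmaint} evaluates its integral as $\varphi(X)/(2g(g-1))$. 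Summing produces $H(X)=S_g(X)+\varphi(X)/4$.

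For the second computation I start from (\ref{intpb2}) and apply Proposition \ref{equhypfactor} to $2P_1+P_2+\dots+P_{g-1}-P_g$, viewed formally as $R_1+\dots+R_g-Q$ with $R_1=R_2=P_1$, $R_j=P_{j-1}$ for $j\geq 3$ and $Q=P_g$. The off-diagonal linear and cross terms are handled as in the first computation, but the coincidence $R_1=R_2$ produces a new diagonal contribution $g(\sigma(P_1),P_1)$ that is not covered by Lemma \ref{lemgsigmaint}. To evaluate its integral I specialize Proposition \ref{equhypfactor} to the total diagonal $P_1=\dots=P_g$, obtaining $\log\|\theta\|(gP_1-Q)=S_g(X)+g\cdot g(P_1,Q)+\binom{g}{2}g(\sigma(P_1),P_1)$; integrating against $\mu(P_1)$, using (G4), and recalling the definition of $S_1(X)$ gives $\int_X g(\sigma(P_1),P_1)\mu(P_1)=2(S_1(X)-S_g(X))/(g(g-1))$. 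Inserting this together with the $\varphi(X)$-contributions from the remaining off-diagonal terms leads to a second expression of the form $H(X)=S_g(X)+c\,\varphi(X)+2(S_1(X)-S_g(X))/(g(g-1))$ with explicit coefficient $c=(g+2)/(4g)$.

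Setting the two expressions for $H(X)$ equal eliminates $S_g(X)$ and yields $\varphi(X)=4(S_g(X)-S_1(X))/(g-1)$; substituting back into $H(X)=S_g(X)+\varphi(X)/4$ gives $(g-1)H(X)=gS_g(X)-S_1(X)$, as claimed. The principal obstacle is the combinatorial bookkeeping in the second computation: the multiplicity $2$ on $P_1$ doubles the coefficients of $g(P_1,P_g)$ and of $g(\sigma(P_1),P_l)$ for $l\geq 2$ while simultaneously isolating the single diagonal self-interaction $g(\sigma(P_1),P_1)$, so verifying the coefficient $c=(g+2)/(4g)$ amounts to checking the identity $1+(g-2)/2+(g-2)+\binom{g-2}{2}=(g-1)(g+2)/4$ for the count of $\varphi(X)/(g(g-1))$-contributions.
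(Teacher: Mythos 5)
Your proposal follows essentially the same route as the paper's proof: you apply the decomposition of Proposition \ref{equhypfactor} to the two pullback expressions (\ref{intpb}) and (\ref{intpb2}), evaluate the Green-function integrals via Lemmas \ref{lemformen} and \ref{lemgsigmaint} (your change-of-variables argument using $\sigma^*\psi_j=-\psi_j$ is a legitimate substitute for the paper's Lemma \ref{lemgsigmaint2}, and your diagonal specialization is exactly identity (\ref{equsksg}) for $k=1$), and the resulting two expressions for $H(X)$ and the final elimination agree with the paper's, so the argument is correct. One slip in the bookkeeping sentence: as written, $1+\tfrac{g-2}{2}+(g-2)+\tbinom{g-2}{2}=\tfrac{(g-1)(g+2)}{4}$ holds only for $g=2,3$; the correct count is that there are $g+2(g-2)+\tbinom{g-2}{2}=\tfrac{(g-1)(g+2)}{2}$ off-diagonal Green terms, each contributing $\tfrac{1}{2g(g-1)}\varphi(X)$ (equivalently, the $\tbinom{g-2}{2}$ term in your sum should carry a factor $\tfrac{1}{2}$), and this does give the coefficient $c=\tfrac{g+2}{4g}$ that you state and use, so the conclusion is unaffected.
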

The idea of the proof is to apply the decomposition in Proposition \ref{equhypfactor} to the two different expressions of $H(X)$ in (\ref{intpb}) and (\ref{intpb2}). First, we prove the following two lemmas.
\begin{Lem}\label{diffS}
	We have $2S_{1}(X)=g(g-1)S_{g-1}(X)-(g+1)(g-2)S_{g}(X).$
\end{Lem}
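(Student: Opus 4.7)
The plan is to specialize the decomposition from Proposition \ref{equhypfactor} in two different ways, namely to $S_1$ and to $S_{g-1}$, and then eliminate the common new ingredient that appears, which is the one-dimensional integral
$$I := \int_X g(\sigma(P),P)\,\mu(P).$$

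First I would set $P_1 = P_2 = \dots = P_g = P$ in Proposition \ref{equhypfactor}. The decomposition becomes
$$\log\|\theta\|(gP - Q) = S_g(X) + g\cdot g(P,Q) + \tbinom{g}{2}\, g(\sigma(P),P),$$
valid for $P$ outside the (measure-zero) set of Weierstraß points, where $g(\sigma(P),P)=g(P,P)$ has a logarithmic singularity but remains integrable. Multiplying by $\mu(P)$ and integrating over $X$, the first Green-function term vanishes by property (G4), and one obtains
$$S_1(X) = S_g(X) + \tfrac{g(g-1)}{2}\, I.$$

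Next, I would perform the specialization $P_1 = P_2 = Q_1$ and $P_{j+1} = Q_j$ for $2\le j\le g-1$, so the divisor on the left becomes $2Q_1 + Q_2 + \dots + Q_{g-1} - Q$. Expanding $\sum_{k<l} g(\sigma(P_k),P_l)$ under this identification produces one diagonal term $g(\sigma(Q_1),Q_1)$ together with mixed terms of the form $g(\sigma(Q_i),Q_j)$ with $i\ne j$. Integrating over $X^{g-1}$ against $\mu(Q_1)\cdots\mu(Q_{g-1})$, every term involving $g(\cdot,Q)$ or a mixed $g(\sigma(Q_i),Q_j)$ with distinct indices vanishes by (G4) after integrating in the unrestricted variable, leaving only
$$S_{g-1}(X) = S_g(X) + I.$$

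Finally, eliminating $I$ between the two displayed equations gives
$$2S_1(X) = 2S_g(X) + g(g-1)\bigl(S_{g-1}(X) - S_g(X)\bigr) = g(g-1)S_{g-1}(X) - (g^2-g-2)S_g(X),$$
and since $g^2 - g - 2 = (g+1)(g-2)$, this is exactly the claimed identity. The main thing to be careful about is the specialization: Proposition \ref{equhypfactor} was proved as an equality of functions on $X^g$, and I must justify that restricting to the diagonals $P_1 = \cdots = P_g$ and $P_1 = P_2$ preserves the identity off a measure-zero locus and that the integrals of the resulting Green-function terms exist and behave as claimed. Both of these issues are standard since Arakelov--Green functions have only integrable logarithmic singularities along the diagonal, so no real obstacle is expected.
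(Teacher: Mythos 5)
Your proof is correct and takes essentially the same approach as the paper: specialize the decomposition of Proposition~\ref{equhypfactor} to the divisors $gP-Q$ and $2P_1+P_2+\dots+P_{g-1}-Q$, integrate to obtain $S_1(X)=S_g(X)+\tfrac{g(g-1)}{2}\int_X g(\sigma(P),P)\mu(P)$ and $S_{g-1}(X)=S_g(X)+\int_X g(\sigma(P),P)\mu(P)$, then eliminate the common integral. The paper records the general identity $S_k(X)=S_g(X)+\tfrac{(g-k)(g-k+1)}{2}\int_X g(\sigma(P),P)\mu(P)$ before specializing to $k=1$ and $k=g-1$, but the argument is the same.
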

\begin{proof}
	If we apply Proposition \ref{equhypfactor} to $\log\|\theta\|((g-k+1)P_{1}+P_{2}+\dots+P_{k}-Q)$
	and if we integrate with $\mu(P_{1})\dots\mu(P_{k})$, we get
	\begin{align}\label{equsksg}	
	S_{k}(X)=S_{g}(X)+\tfrac{(g-k)(g-k+1)}{2}\int_{X}g(\sigma(P),P)\mu(P).
	\end{align}
	If we do this for $k=1$ and for $k=g-1$, we can solve the two resulting
	equations for $S_{1}(X)$, $S_{g-1}(X)$ and $S_{g}(X)$. This yields the assertion of the lemma.
 \end{proof}

The proof shows, that we can give similar relations for any three of the $S_{j}(X)$'s,
but we will not need this.
\begin{Lem}\label{lemgsigmaint2}
For $k\neq l$ we have
$$\tfrac{1}{(g!)^{2}}\int_{X^{g}}g(\sigma(P_{k}),P_{l})\Phi^{*}\nu^{g}=\tfrac{1}{2g(g-1)}\cdot\varphi(X).$$
\end{Lem}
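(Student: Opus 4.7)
The plan is to reduce this to Lemma \ref{lemgsigmaint} via a change of variables using the hyperelliptic involution. The key fact is that on a hyperelliptic Riemann surface, the involution $\sigma$ acts by $-1$ on $H^0(X,\Omega_X^1)$: indeed a basis of holomorphic differentials is given by $x^j dx/y$ for $j=0,\dots,g-1$, and $\sigma^*(x^j dx/y)=-x^j dx/y$ since $\sigma$ fixes $x$ and negates $y$. In particular, after the linear change of coordinates in $\C^g$ used in the proof of Lemma \ref{lemgsigmaint} (which only mixes the $\psi_j$'s among themselves), one still has $\sigma^*\psi_j=-\psi_j$ for every $j$.

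Let $\sigma_k\colon X^g\to X^g$ denote the automorphism acting as $\sigma$ on the $k$-th factor and as the identity on the remaining factors. Since $\sigma$ is holomorphic, $\sigma_k$ is orientation-preserving. The first main step is to verify that $\sigma_k^*\Phi^*\nu^g=\Phi^*\nu^g$. Using the expansion
$$\Phi^{*}\nu^{g}=\left(\tfrac{i}{2}\right)^{g}g!\sum_{\rho,\tau\in\mathrm{Sym}(g)}\bigwedge_{m=1}^{g}\psi_{m}(P_{\rho(m)})\wedge\bar{\psi}_{m}(P_{\tau(m)})$$
from the proof of Lemma \ref{lemformen}, in each summand exactly one factor of the form $\psi_m(P_k)$ (for $m=\rho^{-1}(k)$) and exactly one factor of the form $\bar\psi_m(P_k)$ (for $m=\tau^{-1}(k)$) involve the variable $P_k$. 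Pulling back by $\sigma_k$ applies $\sigma^*$ to precisely these two factors, yielding a total sign of $(-1)^2=+1$. Thus every summand is fixed, so $\sigma_k^*\Phi^*\nu^g=\Phi^*\nu^g$.

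The second main step is the change of variables itself. Since $\sigma_k$ is an orientation-preserving diffeomorphism of $X^g$, for any integrable function $f$ one has $\int_{X^g} f\, \Phi^*\nu^g = \int_{X^g} (f\circ\sigma_k)\, \sigma_k^*\Phi^*\nu^g$. Applying this to $f(P_1,\dots,P_g)=g(\sigma(P_k),P_l)$ (for $l\neq k$, so the change of variable acts trivially on $P_l$) and using $\sigma^2=\id$ together with the invariance just proved gives
$$\int_{X^g} g(\sigma(P_k),P_l)\,\Phi^*\nu^g=\int_{X^g} g(P_k,P_l)\,\Phi^*\nu^g.$$
Dividing by $(g!)^2$ and invoking Lemma \ref{lemgsigmaint} yields the claimed value $\tfrac{1}{2g(g-1)}\varphi(X)$.

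I do not expect a real obstacle here: the only delicate point is verifying the sign bookkeeping in the first step, which relies crucially on the fact that on a hyperelliptic curve every holomorphic differential is antiinvariant under $\sigma$. Had we worked with a non-hyperelliptic surface, $\sigma$ would not even be globally defined, and the argument would collapse, so the hyperelliptic hypothesis is used essentially.
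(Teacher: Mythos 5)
Your proof is correct, but it takes a different route from the paper. The paper proves this lemma in one line: it observes that the two involutions are compatible, i.e.\ $\sigma(P_1+\dots+P_{g-1})$ (the involution $D\mapsto K_X-D$ on $\Theta^{sm}$) is linearly equivalent to $\sigma(P_1)+\dots+\sigma(P_{g-1})$ (the hyperelliptic involution applied pointwise), because $P_j+\sigma(P_j)\sim 2\cdot\infty$ and $(2g-2)\cdot\infty\sim K_X$; hence $g(\sigma(P_1+\dots+P_{g-1}),P_g)=\sum_{j<g}g(\sigma(P_j),P_g)$, and the statement follows from Lemma \ref{lemgsigmavarphi} together with symmetry in the indices. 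You instead bypass Lemma \ref{lemgsigmavarphi} entirely: you perform the change of variables $P_k\mapsto\sigma(P_k)$ directly on $X^g$, using the specifically hyperelliptic fact that $\sigma^*\psi_j=-\psi_j$ to check that $\sigma_k^*\Phi^*\nu^g=\Phi^*\nu^g$ (the two sign changes in each summand of the expansion of $\Phi^*\nu^g$ cancel), and then reduce to Lemma \ref{lemgsigmaint}. The mechanism — an involution-induced change of variables under which the pulled-back translation-invariant form is unchanged — is the same one the paper uses inside the proof of Lemma \ref{lemgsigmavarphi} (via $\widetilde{\Phi}^*\nu^g=\widetilde{\Phi}_{\sigma}^*\nu^g$), but you apply it at the level of $X^g$ with the hyperelliptic involution rather than at the level of $\widetilde{X^{(g-1)}}\times X$ with the theta involution. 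What each approach buys: the paper's route factors through Lemma \ref{lemgsigmavarphi}, which holds for arbitrary Riemann surfaces of genus $g\ge 2$ and is needed again later (e.g.\ for Theorem \ref{propgreen}), so the hyperelliptic lemma comes for free; your argument is shorter and self-contained but, as you correctly note, is confined to the hyperelliptic case, which is all that is required here. Your sign bookkeeping and the reduction to Lemma \ref{lemgsigmaint} are both sound.
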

\begin{proof}
The involutions on $\Pic_{g-1}(X)$ and on $X$ are compatible in the sense that the divisors $\sigma(P_1+\dots+P_{g-1})$ and $\sigma(P_1)+\dots+\sigma(P_{g-1})$ are linearly equivalent. This follows, since $\sigma(P_j)+P_j$ and $2\cdot\infty$ are linearly equivalent and $(2g-2)\cdot\infty$ represents the canonical divisor class $K_X$, see \cite[Section IIIa §2.]{Mum84}.
Thus, the lemma is a direct consequence of Lemma \ref{lemgsigmavarphi}.
 \end{proof}

\begin{proof}[Proof of Theorem \ref{mainthm}.]
	We can now prove the theorem using Lemma \ref{lemgsigmaint} and Lemma \ref{lemgsigmaint2} to compute
	the terms which we get by applying the decomposition in Proposition \ref{equhypfactor} to the equations (\ref{intpb}) and (\ref{intpb2}). This yields on the one hand
	\begin{align*}
	H(X)&=\tfrac{1}{(g!)^{2}}\int_{X^{g}}\log\|\theta\|(P_{1}+\dots+P_{g}-Q)\Phi^{*}\nu^{g}=S_{g}(X)+\tfrac{1}{4}\varphi(X),
	\end{align*}
	and on the other hand
	\begin{align*}
	H(X)&=\tfrac{1}{(g!)^{2}}\int_{X^{g}}\log\|\theta\|(2P_{1}+P_{2}+\dots+P_{g-1}-P_{g})\Phi^{*}\nu^{g}\\
	&=S_{g}(X)+\int_{X}g(\sigma(P),P)\mu(P)+\left(\tfrac{g(g+1)}{2}-1\right)\tfrac{1}{2g(g-1)}\varphi(X)\\
	&=S_{g-1}(X)+\tfrac{(g+2)}{4g}\varphi(X).
	\end{align*}
	The last equality follows by (\ref{equsksg}).
	A simple computation yields
	$$H(X)=\tfrac{g+2}{2}S_{g}(X)-\tfrac{g}{2}S_{g-1}(X).$$
	Using Lemma \ref{diffS}, we can substitute $S_{g-1}(X)$ to obtain the formula in the theorem.
 \end{proof}

As a corollary of the proof we get the following explicit expression for the Zhang--Kawazumi invariant.
\begin{Cor}\label{KZexplicit}
	It holds $\varphi(X)=\tfrac{4}{g}(H(X)-S_{1}(X))$.
\end{Cor}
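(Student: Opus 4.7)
The plan is to read off the corollary directly from two ingredients that already appeared in the proof of Theorem \ref{mainthm}. The first ingredient is the identity
$$H(X) = S_g(X) + \tfrac{1}{4}\varphi(X),$$
obtained in the first computation of that proof by applying the decomposition of Proposition \ref{equhypfactor} to the pullback formula (\ref{intpb}) and using Lemma \ref{lemgsigmaint} and Lemma \ref{lemgsigmaint2} to evaluate the resulting Arakelov--Green integrals. Solving this for $\varphi(X)$ gives $\varphi(X) = 4(H(X)-S_g(X))$, so it only remains to eliminate $S_g(X)$.

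The second ingredient is Theorem \ref{mainthm} itself, which states $(g-1)H(X) = gS_g(X) - S_1(X)$. Solving this for $S_g(X)$ yields
$$S_g(X) = \tfrac{g-1}{g}H(X) + \tfrac{1}{g}S_1(X),$$
hence $H(X) - S_g(X) = \tfrac{1}{g}(H(X) - S_1(X))$. Substituting into the expression $\varphi(X) = 4(H(X)-S_g(X))$ gives the claimed formula $\varphi(X) = \tfrac{4}{g}(H(X) - S_1(X))$.

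There is essentially no obstacle here: the corollary is a purely algebraic consequence of two equations that have already been established in the course of proving Theorem \ref{mainthm}. The only care required is to note that the intermediate identity $H(X) = S_g(X) + \tfrac{1}{4}\varphi(X)$, although not stated as a separate lemma, is exactly the first displayed equation in the proof of Theorem \ref{mainthm} and can be invoked as such.
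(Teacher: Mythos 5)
Your derivation is correct and is exactly the argument the paper intends when it states the corollary follows from the proof of Theorem \ref{mainthm}: you combine the identity $H(X)=S_g(X)+\tfrac{1}{4}\varphi(X)$ established there with the theorem's relation $(g-1)H(X)=gS_g(X)-S_1(X)$ to eliminate $S_g(X)$. No gaps; this matches the paper's approach.
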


\subsection{Explicit formulas for the delta invariant}\label{sectionmainhyper}
Now we can deduce an explicit formula for $\delta(X)$.
As mentioned in the introduction, Bost \cite[Proposition 4]{Bos87} stated the following expression for $\delta(X)$ for $g=2$:
$$\delta(X)=-4 H(X)-\tfrac{1}{4}\log\|\Delta_{2}\|(X)-16\log(2\pi).$$
We generalize this to hyperelliptic Riemann surfaces. Furthermore, we give a relation between $\delta(X)$ and $\varphi(X)$.
\begin{Thm}\label{hypdelta}
	We have
	$$\delta(X)=-\tfrac{8(g-1)}{g}H(X)-\tbinom{2g}{g-1}^{-1}\log\|\Delta_{g}\|(X)-8g\log2\pi$$
	and
	$$\delta(X)=-24H(X)+2\varphi(X)-8g\log2\pi.$$
\end{Thm}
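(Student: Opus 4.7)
\textbf{Proof plan for Theorem \ref{hypdelta}.} The strategy is purely algebraic: combine de Jong's formula \eqref{deltadejong} with the new identity from Theorem \ref{mainthm} and the relation between $\|\varphi_g\|(X)$ and the invariants $B(X), S_g(X)$ from Corollary \ref{discab}. Every ingredient is in hand; what remains is to eliminate the auxiliary quantities $S_1(X)$, $S_g(X)$ and $B(X)$ in the right order.

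First I would massage Corollary \ref{discab} into a formula relating $\log\|\Delta_g\|(X)$ to $\delta(X)$, $S_g(X)$ and a constant. Using $\log\|\Delta_g\|(X)=\log\|\varphi_g\|(X)-4(g+1)\binom{2g}{g-1}\log 2$ and substituting $B(X)=(g-1)S_g(X)-\tfrac18\delta(X)$ from \eqref{deltaab} into Corollary \ref{discab}, a direct computation collapses the $S_g(X)$-coefficient from $\tfrac{(g+1)(g-1)}{g}-(g-1)=\tfrac{g-1}{g}$ and yields
\begin{equation*}
\tbinom{2g}{g-1}^{-1}\log\|\Delta_g\|(X)=\tfrac{4(g-1)}{g}S_g(X)-\tfrac{g+1}{2g}\delta(X)-4(g+1)\log 2\pi. \tag{$\ast$}
\end{equation*}

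Next, I substitute $S_1(X)=gS_g(X)-(g-1)H(X)$ from Theorem \ref{mainthm} into \eqref{deltadejong}, producing
\begin{equation*}
\delta(X)=\tfrac{4(g-1)}{g}S_g(X)-\tfrac{4(g-1)^2}{g^2}H(X)-\tfrac{3g-1}{2g}\tbinom{2g}{g-1}^{-1}\log\|\Delta_g\|(X)-8g\log 2\pi.
\end{equation*}
Now I use $(\ast)$ to eliminate $S_g(X)$: replace the term $\tfrac{4(g-1)}{g}S_g(X)$ by $\tbinom{2g}{g-1}^{-1}\log\|\Delta_g\|(X)+\tfrac{g+1}{2g}\delta(X)+4(g+1)\log 2\pi$. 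The $\log\|\Delta_g\|(X)$-coefficient becomes $1-\tfrac{3g-1}{2g}=-\tfrac{g-1}{2g}$, the $\delta$-coefficient on the right is $\tfrac{g+1}{2g}$, and the constants combine to $-4(g-1)\log 2\pi$. Moving $\tfrac{g+1}{2g}\delta(X)$ to the left and multiplying through by $\tfrac{2g}{g-1}$ yields the first formula
\begin{equation*}
\delta(X)=-\tfrac{8(g-1)}{g}H(X)-\tbinom{2g}{g-1}^{-1}\log\|\Delta_g\|(X)-8g\log 2\pi.
\end{equation*}

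For the second formula, I would substitute $(\ast)$ directly into the first formula; the $\delta$-terms recombine so that solving gives the cleaner intermediate identity
\begin{equation*}
\delta(X)=-16H(X)-8S_g(X)-8g\log 2\pi.
\end{equation*}
Finally, combining Corollary \ref{KZexplicit} with Theorem \ref{mainthm} immediately gives $S_g(X)=H(X)-\tfrac14\varphi(X)$ (eliminate $S_1(X)$ between the two identities). Inserting this yields $\delta(X)=-24H(X)+2\varphi(X)-8g\log 2\pi$, as required.

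The main obstacle is simply bookkeeping: one must keep track of the $S_g$-, $\delta$- and $\log 2\pi$-coefficients through several substitutions, with the delicate point being that the coefficient of $\delta(X)$ that appears when inserting $(\ast)$ combines nontrivially with the $1$ on the left, producing precisely the factor $\tfrac{g-1}{2g}$ that cancels against $\tfrac{2g}{g-1}$ to give the clean result. No genuinely new geometric input is needed beyond the identities already established.
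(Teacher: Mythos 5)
Your proof is correct and follows essentially the same route as the paper: it combines de Jong's formula (\ref{deltadejong}) with Theorem \ref{mainthm} and the relation obtained from (\ref{deltaab}) and Corollary \ref{discab} — your identity $(\ast)$ is exactly the paper's equation (\ref{sgdeltadisc}) — and then eliminates $S_g(X)$ to get the first formula. For the second formula the paper eliminates $\log\|\Delta_g\|(X)$ by invoking (\ref{deltadejong}) once more and then applies Corollary \ref{KZexplicit} directly, whereas you eliminate via $(\ast)$ and use $S_g(X)=H(X)-\tfrac14\varphi(X)$; this is only a trivially different bookkeeping of the same identities.
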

\begin{proof}
	First, we substitute $S_{1}(X)$ in formula (\ref{deltadejong}) by the result of Theorem \ref{mainthm}. This yields
	\begin{align}\label{deltasghdisc}
	\delta(X)=\tfrac{4(g-1)}{g}S_{g}(X)-\tfrac{4(g-1)^{2}}{g^{2}}H(X)-\tfrac{3g-1}{2gn}\log\|\Delta_{g}\|(X)-8g\log2\pi,
	\end{align}
	where we denote shortly $n=\binom{2g}{g-1}$.
	A combination of formula (\ref{deltaab}) and Corollary \ref{discab} yields
	\begin{align}\label{sgdeltadisc}
		S_{g}(X)=\tfrac{g(g+1)}{g-1}\log 2\pi+\tfrac{g}{4n(g-1)}\log\|\Delta_{g}\|(X)+\tfrac{g+1}{8(g-1)}\delta(X).
	\end{align}
	If we now insert (\ref{sgdeltadisc}) for the $S_{g}(X)$-term in (\ref{deltasghdisc}) and solve for $\delta(X)$, we obtain the first formula in the theorem. If we apply again equation (\ref{deltadejong}) to this formula,
	we can eliminate the $\log\|\Delta_g\|(X)$-term to obtain
	$$\delta(X)=-\tfrac{8(3g-1)}{g}H(X)-\tfrac{8}{g}S_1(X)-8g\log2\pi.$$
	Now Corollary \ref{KZexplicit} gives the second formula in the theorem.
 \end{proof}

We deduce the following formula for $\delta(X)$, which was also proved by de Jong in \cite[Corollary 1.8]{dJo13} by different methods.
\begin{Cor}\label{discdeltavarphi}
It holds
$$\delta(X)=-\tfrac{2(g-1)}{2g+1}\varphi(X)-\tfrac{3g}{(2g+1)}\tbinom{2g}{g-1}^{-1}\log\|\Delta_g\|(X)-8g\log 2\pi.$$
\end{Cor}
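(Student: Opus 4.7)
The plan is to derive the formula by taking a linear combination of the two expressions for $\delta(X)$ in Theorem \ref{hypdelta} that eliminates the invariant $H(X)$, which does not appear on the right-hand side of the corollary.

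Concretely, the first formula in Theorem \ref{hypdelta} reads
$$\delta(X)=-\tfrac{8(g-1)}{g}H(X)-\tbinom{2g}{g-1}^{-1}\log\|\Delta_{g}\|(X)-8g\log2\pi,$$
while the second reads
$$\delta(X)=-24H(X)+2\varphi(X)-8g\log2\pi.$$
From the second formula I would solve for $H(X)$, obtaining
$$H(X)=\tfrac{1}{12}\varphi(X)-\tfrac{1}{24}\delta(X)-\tfrac{g}{3}\log2\pi,$$
and substitute this into the first formula. This produces a term $\tfrac{g-1}{3g}\delta(X)$ on the right-hand side, which I would collect with the $\delta(X)$ on the left-hand side to get a coefficient of $\tfrac{2g+1}{3g}$. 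After simplifying the $\log 2\pi$ terms (the constant $\tfrac{8(g-1)}{3}-8g=-\tfrac{8(2g+1)}{3}$) and multiplying through by $\tfrac{3g}{2g+1}$, the claimed formula drops out.

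There is no real obstacle here: the argument is a one-step algebraic manipulation using the two formulas already established in Theorem \ref{hypdelta}. The only thing to be careful about is bookkeeping of the coefficients, in particular checking that $8(g-1)/3 - 8g$ collapses cleanly to a multiple of $2g+1$, so that the final factor of $\tfrac{3g}{2g+1}$ gives exactly $-8g\log 2\pi$ as the constant term.
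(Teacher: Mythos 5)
Your proposal is correct and matches the paper's own proof, which likewise obtains the formula directly by combining the two identities of Theorem \ref{hypdelta} to eliminate $H(X)$; the coefficient bookkeeping you describe (in particular $\tfrac{8(g-1)}{3}-8g=-\tfrac{8(2g+1)}{3}$ and the factor $\tfrac{2g+1}{3g}$ in front of $\delta(X)$) checks out.
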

\begin{proof}
This formula directly follows by combining the two formulas in Theorem \ref{hypdelta}.
 \end{proof}

We can also conclude the following corollary about the Zhang--Kawazumi invariant $\varphi(X)$ and the modified discriminant $\|\Delta_{g}\|(X)$.
\begin{Cor}\label{discbound}
	We obtain the following explicit formula for $\varphi(X)$
	$$\varphi(X)=\tfrac{4(2g+1)}{g}H(X)-\tfrac{1}{2}\tbinom{2g}{g-1}^{-1}\log\|\Delta_{g}\|(X).$$
	In particular, we get the upper bound $\log\|\Delta_g\|(X)<-2(2g+1)\tbinom{2g}{g-1}\log 2$.
\end{Cor}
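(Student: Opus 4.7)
The plan is to obtain the formula for $\varphi(X)$ by simply eliminating $\delta(X)$ between the two expressions provided in Theorem \ref{hypdelta}. Setting the right-hand sides equal gives
$$-\tfrac{8(g-1)}{g}H(X)-\tbinom{2g}{g-1}^{-1}\log\|\Delta_{g}\|(X)=-24H(X)+2\varphi(X),$$
and solving for $\varphi(X)$ yields
$$2\varphi(X)=\tfrac{24g-8(g-1)}{g}H(X)-\tbinom{2g}{g-1}^{-1}\log\|\Delta_{g}\|(X).$$
Since $24g-8(g-1)=16g+8=8(2g+1)$, division by $2$ produces precisely the formula claimed in the corollary. This step is a one-line algebraic manipulation with no real obstacle.

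For the upper bound on $\log\|\Delta_g\|(X)$, the plan is to solve the identity just established for $\log\|\Delta_g\|(X)$ and then substitute the a priori inequalities proved earlier. Rearranging gives
$$\log\|\Delta_{g}\|(X)=\tfrac{8(2g+1)}{g}\tbinom{2g}{g-1}H(X)-2\tbinom{2g}{g-1}\varphi(X).$$
Now Proposition \ref{Hbound} yields $H(X)<-\tfrac{g}{4}\log 2$, and inequality (\ref{kzbound}) (together with the explicit vanishing in genus $1$) gives $\varphi(X)\ge 0$. Feeding these bounds into the displayed expression, the coefficient of $H(X)$ is positive, so upper-bounding $H(X)$ and dropping the nonpositive term $-2\tbinom{2g}{g-1}\varphi(X)$ gives
$$\log\|\Delta_{g}\|(X)<\tfrac{8(2g+1)}{g}\tbinom{2g}{g-1}\cdot\left(-\tfrac{g}{4}\log 2\right)=-2(2g+1)\tbinom{2g}{g-1}\log 2,$$
which is the asserted bound.

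There is no substantive obstacle here; all the work was done in assembling Theorem \ref{hypdelta}, Proposition \ref{Hbound}, and (\ref{kzbound}). The corollary is essentially a bookkeeping consequence. The only point worth noting is that the strict inequality in the bound for $\log\|\Delta_g\|(X)$ is inherited from the strict inequality in Proposition \ref{Hbound} (coming from Jensen's inequality applied to the nonconstant function $\|\theta\|^2$), so the conclusion is strict even without invoking $\varphi(X)>0$.
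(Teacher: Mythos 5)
Your proposal is correct and follows essentially the same route as the paper: eliminate $\delta(X)$ between the two formulas of Theorem \ref{hypdelta} to get the expression for $\varphi(X)$, then combine Proposition \ref{Hbound} with the nonnegativity of $\varphi(X)$ to obtain the strict upper bound on $\log\|\Delta_g\|(X)$. (Your aside about genus $1$ is unnecessary since this corollary lives in the hyperelliptic setting with $g\ge 2$, where in fact $\varphi(X)>0$, but this does not affect the argument.)
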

\begin{proof}
	One gets the formula for $\varphi(X)$ by comparing the two formulas in Theorem \ref{hypdelta} and solving for $\varphi(X)$, $\log\|\Delta_{g}\|(X)$ and $H(X)$.
	The bound follows by (\ref{kzbound}) and Proposition \ref{Hbound}.
 \end{proof}

Von Känel has already given an upper bound for $\|\Delta_{g}\|(X)$ in \cite[Lemma 5.4]{vKa14}.
However, our bound for $\|\Delta_g\|(X)$ is much sharper. In particular, it decreases for growing $g$.
\begin{Exam}
The formulas in Theorem \ref{hypdelta} and Corollary \ref{discbound} allow us to compute the invariants $\delta$ and $\varphi$ effectively for hyperelliptic Riemann surfaces. For any integer $n\ge 5$ consider the hyperelliptic Riemann surface $X_n$ given by the projective closure of the complex, affine curve defined by 
$$y^2=x^n+a,$$ where $a\in\C\setminus\lbrace 0\rbrace$. The isomorphism class of $X_n$ does not depend on $a$, as one sees by a change of coordinates. It is also isomorphic to the hyperelliptic Riemann surface associated to the equation $y^2+y=x^n$. Using the software Mathematica, we obtained explicit values for $\delta(X_n)$ and $\varphi(X_n)$, see Table \ref{tab1}.

The values of $H(X_5)$, $\|\Delta_2\|(X_5)$ and $\delta(X_5)$ were also computed in \cite{BMM90}. More recently, Pioline found in \cite{Pio16} formulas for the invariants $\delta$ and $\varphi$ of Riemann surfaces of genus $2$, which allow a noticeably more efficient computation of $\delta$ and $\varphi$ than our formulas.
In particular, he computed the values of $\delta(X_5)$, $\varphi(X_5)$, $\delta(X_6)$ and $\varphi(X_6)$ in \cite[Section 4.1]{Pio16}. 
\begin{table}[h]
\begin{tabular}{|c|c|c|c|c|c|}
\hline
$n$& Genus of $X_n$ & $\log\|\Delta_g\|(X_n)$ & $H(X_n)$ & $\delta(X_n)\approx$ & $\varphi(X_n)\approx$\\ \hline
$5$&$2$&$-43.14$&$-0.485~(\pm 0.003)$&$-16.68$&$0.54$\\
$6$&$2$&$-44.34$&$-0.495~(\pm 0.001)$&$-16.34$&$0.59$\\ \hline
$7$&$3$&$-239.75$&$-0.706~(\pm 0.019)$&$-24.36$&$1.40$\\
$8$&$3$&$-246.58$&$-0.719~(\pm 0.011)$&$-23.84$&$1.51$\\
\hline
\end{tabular}
\caption{Examples for $\delta$ and $\varphi$.}\label{tab1}
\end{table}
\end{Exam}

The invariant $\|\Delta_g\|(X)$ can be computed much more efficiently than the invariant $H(X)$. Moreover, the Noether formula predicts, that $\delta$ is the archimedean analogue of the logarithm of the discriminant of the finite places. Indeed, $\delta$ is essentially the logarithm of the norm of the modular discriminant for elliptic Riemann surfaces.
Hence, it may be interesting to approximate $\delta(X)$ by $\log\|\Delta_g\|(X)$ for hyperelliptic Riemann surfaces.
\begin{Cor}\label{deltainterval}
	We have the following
	relation between the invariants $\delta(X)$ and $\|\Delta_{g}\|(X)$:
	$$-\tfrac{1}{n}\log\|\Delta_{g}\|(X)+2(g-1)\log 2<\delta(X)+8g\log2\pi< \tfrac{-3g}{(2g+1)n}\log\|\Delta_{g}\|(X),$$
	where we write shortly $n=\binom{2g}{g-1}$.
\end{Cor}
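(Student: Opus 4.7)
The plan is to derive both inequalities directly from the two different expressions for $\delta(X)$ already at our disposal, combined with the known sign-bounds for $H(X)$ and $\varphi(X)$.

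For the \emph{upper} bound, I would start from Corollary \ref{discdeltavarphi}, which reads
$$\delta(X)+8g\log 2\pi=-\tfrac{2(g-1)}{2g+1}\varphi(X)-\tfrac{3g}{(2g+1)n}\log\|\Delta_g\|(X),$$
where $n=\binom{2g}{g-1}$. Since $g\ge 2$, the coefficient $-\tfrac{2(g-1)}{2g+1}$ is strictly negative, and by (\ref{kzbound}) we have $\varphi(X)>0$. Dropping this strictly negative contribution yields precisely
$$\delta(X)+8g\log 2\pi<-\tfrac{3g}{(2g+1)n}\log\|\Delta_g\|(X).$$

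For the \emph{lower} bound, I would instead invoke the first formula of Theorem \ref{hypdelta}, namely
$$\delta(X)+8g\log 2\pi=-\tfrac{8(g-1)}{g}H(X)-\tfrac{1}{n}\log\|\Delta_g\|(X).$$
By Proposition \ref{Hbound} we have $H(X)<-\tfrac{g}{4}\log 2$, hence $-H(X)>\tfrac{g}{4}\log 2$, and multiplying by the positive factor $\tfrac{8(g-1)}{g}$ gives $-\tfrac{8(g-1)}{g}H(X)>2(g-1)\log 2$. Substituting this into the displayed equation yields
$$\delta(X)+8g\log 2\pi>-\tfrac{1}{n}\log\|\Delta_g\|(X)+2(g-1)\log 2,$$
which is the desired lower bound.

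There is no real obstacle here; the corollary is a direct consequence of assembling the two formulas for $\delta(X)$ proved in Theorem \ref{hypdelta} and Corollary \ref{discdeltavarphi} with the sign information about $H(X)$ and $\varphi(X)$ recorded earlier. The only point to be slightly careful about is verifying that the coefficients of $H(X)$ and $\varphi(X)$ in the respective formulas have the sign needed to turn the pointwise bounds on $H(X)$ and $\varphi(X)$ into strict inequalities in the stated direction, which is immediate for $g\ge 2$.
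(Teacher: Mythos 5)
Your proof is correct and matches the paper's own argument exactly: the lower bound comes from the first formula of Theorem \ref{hypdelta} together with Proposition \ref{Hbound}, and the upper bound from Corollary \ref{discdeltavarphi} together with $\varphi(X)>0$. Nothing to add.
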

\begin{proof}
	The first bound directly follows from the first formula in Theorem \ref{hypdelta} and the bound in Proposition \ref{Hbound}.
	The second inequality follows by applying the bound $\varphi(X)>0$ to the formula in Corollary \ref{discdeltavarphi}.
 \end{proof}

\subsection{A generalized Rosenhain formula}\label{secrosenhain}
Finally, we apply the decomposition in Proposition \ref{equhypfactor} to give an absolute value answer to a conjecture by Guàrdia in \cite[Conjecture 14.1]{Gua02}.
Rosenhain stated in \cite{Ros51} an identity for the case $g=2$, which can be written in our setting as
$$\|J\|(W,W')=\pi^{2}\prod_{W''\neq W,W'}\|\theta\|(W''+W-W'),$$
where $W,W'$ are two different Weierstraß points and the product runs over all Weierstraß points $W''$ different
from $W$ and $W'$. Looking for a generalization to genus $g\ge 2$, de Jong
has found formula (\ref{djr}).
We deduce the following more general result.
\begin{Thm}\label{rosgen}
	For any permutation $\tau\in\mathrm{Sym}(2g+2)$ it holds
	$$\|J\|(W_{\tau(1)},\dots,W_{\tau(g)})=\pi^{g}\prod_{j=g+1}^{2g+2}\|\theta\|(W_{\tau(1)}+\dots+W_{\tau(g)}-W_{\tau(j)}).$$
\end{Thm}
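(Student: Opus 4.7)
My plan is to apply the decompositions of $\log\|\theta\|$ (Proposition~\ref{equhypfactor}) and of $\log\|J\|$ (Corollary~\ref{jfactor}) at the Weierstraß points, reducing the theorem to the claim that a certain Arakelov--Green sum is independent of the chosen Weierstraß point, and then to deduce this independence from the invariance of $\|\varphi_g\|(X)$ under the choice of the point at infinity in (\ref{equdiscinfty}).

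Write $A=\{\tau(1),\dots,\tau(g)\}$, $D_A=\sum_{k\le g}W_{\tau(k)}$, $S_A=\sum_{k<l\le g}g(W_{\tau(k)},W_{\tau(l)})$ and $T_A=\sum_{k\le g<j}g(W_{\tau(k)},W_{\tau(j)})$. Since each Weierstraß point is fixed by $\sigma$, evaluating Corollary~\ref{jfactor} at the points $W_{\tau(k)}$ collapses to $\log\|J\|(W_{\tau(1)},\dots,W_{\tau(g)})=B(X)+gS_A$, while evaluating Proposition~\ref{equhypfactor} with $Q=W_{\tau(j)}$ yields $\log\|\theta\|(D_A-W_{\tau(j)})=S_g(X)+\sum_{k=1}^g g(W_{\tau(k)},W_{\tau(j)})+S_A$. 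Summing the latter over $j=g+1,\dots,2g+2$ reduces the theorem to the elementary identity
$$B(X)-g\log\pi-(g+2)S_g(X)=T_A+2S_A.$$
A short double-counting rearrangement identifies $T_A+2S_A=\sum_{k\in A}\Phi(W_k)$, where $\Phi(W):=\sum_{W'\neq W}g(W,W')$ is the total Arakelov--Green potential of the Weierstraß point $W$ against the remaining ones. Since the left-hand side above is independent of $A$, the theorem is equivalent to $\Phi$ being constant over the Weierstraß points.

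Establishing this constancy is the key step, and the main obstacle. For it, I would use the observation recalled just before (\ref{equdiscweier}) that any Weierstraß point can serve as the point at infinity in (\ref{equdiscinfty}). Writing $\|\theta\|(T)$ for the common value $\|\theta\|(W_{j_1}+\dots+W_{j_g}-W_{j_{g+1}})$ attached to a $(g+1)$-subset $T$ (well-definedness follows from $2W_{j_{g+1}}\sim 2\infty$), this observation gives
$$\sum_{T\ni k,\;|T|=g+1}\log\|\theta\|(T)=\tfrac{1}{8}\log\|\varphi_g\|(X)$$
for every Weierstraß index $k$, the right-hand side being manifestly independent of $k$. Expanding each factor via Proposition~\ref{equhypfactor} as $\log\|\theta\|(T)=S_g(X)+\sum_{\{a,b\}\subset T}g(W_a,W_b)$, summing over the $\binom{2g+1}{g}$ subsets $T\ni k$, and counting how many of them contain a given pair $\{a,b\}$ (namely $\binom{2g}{g-1}$ if $k\in\{a,b\}$ and $\binom{2g-1}{g-2}$ otherwise), the displayed identity rewrites as
$$\binom{2g+1}{g}S_g(X)+\binom{2g-1}{g-1}\Phi(W_k)+\binom{2g-1}{g-2}\Sigma=\tfrac{1}{8}\log\|\varphi_g\|(X),$$
with $\Sigma=\sum_{k<l}g(W_k,W_l)$. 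Only the middle term depends on $k$, so $\Phi$ must take the same value at every Weierstraß point.

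Once constancy is known, the double count $\sum_k\Phi(W_k)=2\Sigma$ forces $\Phi=\Sigma/(g+1)$, and a brief manipulation combining (\ref{discwkwl}) with Corollary~\ref{discab} confirms that $g\Sigma/(g+1)=B(X)-g\log\pi-(g+2)S_g(X)$. This matches the constant value required by the reduction and completes the proof.
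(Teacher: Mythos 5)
Your proposal is correct and takes essentially the same route as the paper's proof: both apply Proposition \ref{equhypfactor} and Corollary \ref{jfactor} at the Weierstraß points, obtain the constancy of $\Phi(W)=\sum_{W'\neq W}g(W,W')$ from the invariance of $\|\varphi_g\|(X)$ under the choice of the Weierstraß point at infinity, and fix the constant via Corollary \ref{discab}. (Only a cosmetic remark: the invariance observation directly gives the sum over the $(g+1)$-subsets \emph{avoiding} $k$, and your sum over subsets containing $k$ requires subtracting from (\ref{equdiscweier}); either version is $k$-independent, so your constancy argument and the subsequent value check are unaffected.)
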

\begin{proof}
	First, we compare the applications of the decomposition in Proposition \ref{equhypfactor} to (\ref{equdiscinfty}) and (\ref{equdiscweier}).
	This yields
	$$8\tbinom{2g-1}{g-1}\sum_{1\le k<l\le 2g+1}g(W_{k},W_{l})=4\tbinom{2g}{g-1}\sum_{1\le k<l\le 2g+2}g(W_{k},W_{l}).$$
	An elementary calculation gives
	$$\sum_{1\le k<l\le 2g+1}g(W_{k},W_{l})=g\sum_{k=1}^{2g+1}g(W_{k},W_{2g+2}).$$
	The decomposition corresponding to (\ref{equdiscinfty}) is
	$$\log\|\varphi_{g}\|(X)=8\tbinom{2g+1}{g+1}S_{g}(X)+8\tbinom{2g-1}{g-1}\sum_{1\le k<l\le 2g+1} g(W_{k},W_{l}).$$
	Hence, we get
	$$8g\tbinom{2g-1}{g-1}\sum_{k=1}^{2g+1}g(W_{k},W_{2g+2})=\log\|\varphi_{g}\|(X)-8\tbinom{2g+1}{g+1}S_{g}(X).$$
	Since this does not depend on the choice of the Weierstraß point at infinity, we more generally get for a fixed $1\le m\le 2g+2$
	$$8g\tbinom{2g-1}{g-1}\sum_{\gf{k=1}{k\neq m}}^{2g+2}g(W_{k},W_{m})=\log\|\varphi_{g}\|(X)-8\tbinom{2g+1}{g+1}S_{g}(X).$$
	Summing this for $m=\tau(1),\dots,\tau(g)$ and using Corollary \ref{discab} to eliminate the term $\log\|\varphi_{g}\|(X)$, we get
	$$\sum_{j=1}^{g}\sum_{\gf{k=1}{k\neq\tau(j)}}^{2g+2}g(W_{k},W_{\tau(j)})=B(X)-(g+2)S_{g}(X)-g\log \pi.$${}
	Now we can conclude the theorem by the following calculation:
	\begin{align*}
	&\sum_{j=g+1}^{2g+2}\log\|\theta\|(W_{\tau(1)}+\dots+W_{\tau(g)}-W_{\tau(j)})\\
	=&(g+2)S_{g}(X)+(g+2)\sum_{1\le k<l\le g}g(W_{\tau(k)},W_{\tau(l)})+\sum_{j=g+1}^{2g+2}\sum_{k=1}^{g}g(W_{\tau(k)},W_{\tau(j)})\\
	=&(g+2)S_{g}(X)+g\sum_{1\le k<l\le g}g(W_{\tau(k)},W_{\tau(l)})+\sum_{j=1}^{g}\sum_{\gf{k=1}{k\neq\tau(j)}}^{2g+2}g(W_{k},W_{\tau(j)})\\
	=&(g+2)S_{g}(X)+g\sum_{1\le k<l\le g}g(W_{\tau(k)},W_{\tau(l)})+B(X)-(g+2)S_{g}(X)-g\log\pi\\
	=&\log\|J\|(W_{\tau(1)},\dots,W_{\tau(g)})-g\log\pi,
	\end{align*}
	where the last equality follows by Corollary \ref{jfactor}.
	This completes the proof.
 \end{proof}

\section{The general case}\label{Sectiongeneral}
We prove our main result in this section, see Theorem \ref{generalthm}, and we deduce some applications, for example a lower bound for $\delta$ and an explicit expression and an upper bound for the Arakelov--Green function.
\subsection{Forms on universal families}
\label{secforms}
In this subsection we discuss canonical forms on the universal family of compact and connected Riemann surfaces of fixed genus and on the universal family of principally polarised complex abelian varieties of fixed dimension with level $2$ structure. We use these forms to compute the application of $\partial\overline{\partial}$ to invariants of Riemann surfaces considered as functions on the moduli space.

Let $g\ge 3$. Denote by $\mathcal{M}_g$ the moduli space of compact and connected Riemann surfaces of genus $g$ and by $q\colon \mathcal{C}_g\to\mathcal{M}_g$ the universal family of compact and connected Riemann surfaces of genus $g$. 
The Arakelov--Green function defines a function $G\colon\mathcal{C}_g\times_{\mathcal{M}_g}\mathcal{C}_g\to\R_{\ge0}$, which again defines a metric on $\O(\Delta)$, where $\Delta\subseteq \mathcal{C}_g\times_{\mathcal{M}_g}\mathcal{C}_g$ is the diagonal. This induces a metric on the relative tangent bundle $T_{\mathcal{C}_g/\mathcal{M}_g}$, since $T_{\mathcal{C}_g/\mathcal{M}_g}$ is the normal bundle of $\Delta$. Denote by $h=c_1(\O(\Delta))$ the first Chern form of $\O(\Delta)$, that means, we have an equality 
$$\tfrac{1}{\pi i}\partial\overline{\partial}\log G=h-\delta_{\Delta}$$
of currents on $\mathcal{C}_g\times_{\mathcal{M}_g}\mathcal{C}_g$. Further, we set $e^A=h|_\Delta$, which is the first Chern form of $T_{\mathcal{C}_g/\mathcal{M}_g}$. We write $e^A_1=\int_{q} (e^A)^2$.
A direct calculation, see also \cite[Proposition 5.3]{dJo16}, gives the equality
\begin{align}\label{laplacekz}
\tfrac{1}{\pi i}\partial\overline{\partial} \varphi=\int_{q_2} h^3-e^A_1
\end{align}
of forms on $\mathcal{M}_g$, where $q_2\colon \mathcal{C}_g\times_{\mathcal{M}_g}\mathcal{C}_g\to \mathcal{M}_g$ is the canonical morphism.

Denote by $\det q_*\Omega^1_{\mathcal{C}_g/\mathcal{M}_g}$ the determinant of the Hodge bundle of $\mathcal{C}_g$ over $\mathcal{M}_g$ equipped with the metric induced by (\ref{hodgemetric}) and write $\omega_{\mathrm{Hdg}}$ for its first Chern form. The invariant $\delta$ satisfies
\begin{align}\label{laplacedelta}
\tfrac{1}{\pi i}\partial\overline{\partial}\delta=e_1^A-12\omega_{\mathrm{Hdg}},
\end{align}
see for example \cite[Section 10]{dJo16}.

Now we consider $\mathcal{M}_g[2]$, the moduli space of compact and connected Riemann surfaces of genus $g$ with level $2$ structure, see for example \cite[Section 7.4]{HL97} for a precise definition. Denote $\pi\colon\mathcal{X}_g\to\mathcal{M}_g[2]$ for the universal compact and connected Riemann surface over $\mathcal{M}_g[2]$. 
We will fix some notation.
We write $\mathcal{X}_g^n$ for the product $\mathcal{X}_g\times_{\mathcal{M}_g[2]}\dots\times_{\mathcal{M}_g[2]}\mathcal{X}_g$ with $n$ factors over $\mathcal{M}_g[2]$ and $\pi_{n}\colon \mathcal{X}_g^n\to \mathcal{M}_g[2]$ for the canonical morphism. Further, we denote $\mathcal{X}_g^{(n)}$ for the corresponding symmetric product and $\rho_n\colon \mathcal{X}^n\to\mathcal{X}^{(n)}$ for the canonical map. For any $m,n$ with $m\le n$ and pairwise different $j_1,\dots,j_m$ we denote by $pr_{j_{1},\dots,j_{m}} \mathcal{X}_g^n\to \mathcal{X}_g^m$ the projection to the $j_1$-th, $\dots$, $j_{m}$-th factors.
Moreover, we write $pr^{j_1,\dots,j_m}\colon \mathcal{X}_g^n\to \mathcal{X}_g^{n-m}$ for the projection forgetting the $j_1$-th, $\dots$, $j_m$-th factors.
We obtain forms $h$ on $\mathcal{X}_g^2$, $e^A$ on $\mathcal{X}_g$ and $e_1^A$ and $\omega_{\mathrm{Hdg}}$ on $\mathcal{M}_g[2]$ by pulling back the forms $h$, $e^A$, $e_1^A$ and $\omega_{\mathrm{Hdg}}$ defined above by the maps forgetting the level $2$ structure.

Further, we denote by $\mathcal{A}_g[2]$ the moduli space of principally polarised complex abelian varieties with level $2$ structure and we write $p\colon \mathcal{U}_g\to\mathcal{A}_g[2]$ for the universal principally polarised complex abelian variety over $\mathcal{A}_g[2]$.
There exists a $2$-form $\omega_0$ on $\mathcal{U}_g$ such that the restriction of $\omega_0$ to a principally polarised abelian variety $(A,\Theta)$ with arbitrary level $2$ structure considered as a fibre of $p$ is $\nu_{(A,\Theta)}$ and the restriction of $\omega_0$ along the zero section of $p$ is trivial, see for example \cite{HR01}.
Without risk of confusions, we also write $\omega_{\mathrm{Hdg}}$ for the first Chern form of $\det p_* \Omega^1_{\mathcal{U}_g/\mathcal{A}_g[2]}$ endowed with its $L^2$-metric. If we denote the Torelli map by $t\colon \mathcal{M}_g[2]\to \mathcal{A}_g[2]$,
it holds $t^*\omega_{\mathrm{Hdg}}=\omega_{\mathrm{Hdg}}$ as forms on $\mathcal{M}_g[2]$, see \cite[Lemme 3.2.1]{Szp85b}.

Next, we would like to define the function $\|\theta\|$ on $\mathcal{U}_g$. However, there is no canonical theta divisor for an arbitrary principally polarised complex abelian variety. 
But for any compact and connected Riemann surface $X$, there is a canonical theta divisor in $\Pic_{g-1} (X)$ given by the image of the canonical map $X^{(g-1)}\to\Pic_{g-1}(X)$. Every theta characteristic $\alpha$ of $X$ defines a theta divisor $\Theta_{\alpha}\subseteq\Jac(X)$, see (\ref{PicJac}). On $\mathcal{M}_g[2]$ we can consistently choose a theta characteristic on each curve. Hence, we obtain a theta characteristic $\alpha$ of $\mathcal{X}_g$.
For every such theta characteristic $\alpha$ of $\mathcal{X}_g$ we get a theta divisor $\Theta_{\alpha}$ in $\mathcal{U}_g$. Using the properties of uniqueness ($\theta$1)--($\theta$4) in Section \ref{secinvariantsabelian} on each fibre of $p$, we obtain a function $\|\theta_{\alpha}\|$ on $\mathcal{U}_g$. We define a metric on the line bundle $\O(\Theta_{\alpha})$ on $\mathcal{U}_g$ by $\|\theta_{\alpha}\|$. This line bundle has first Chern form $\omega_0+\tfrac{1}{2}\omega_{\mathrm{Hdg}}$, see \cite[Proposition 2]{HR01}. Hence, we obtain
\begin{align*}
\tfrac{1}{\pi i}\partial\overline\partial \log\|\theta_{\alpha}\|=\omega_0+\tfrac{1}{2}\omega_{\mathrm{Hdg}}-\delta_{\Theta_{\alpha}}.
\end{align*}

We would like to express $\tfrac{1}{\pi i}\partial\overline{\partial}H(X)$ by the forms $e_1^A$, $\int_{\pi_2} h^3$ and $\omega_{\mathrm{Hdg}}$. For this purpose, we fix a theta characteristic $\alpha$ of $\mathcal{X}_g$ and we consider the map
\begin{align*}
\gamma'\colon\mathcal{X}_g^{(g-1)}\times_{\mathcal{M}_g[2]} \mathcal{X}_g^2&\to \mathcal{U}_g,\\
[X;(P_1,\dots,P_{g-1}),P_g,P_{g+1}]&\mapsto [\Jac (X);P_1+\dots+P_g-P_{g+1}-\alpha].
\end{align*}
Further, we write $\gamma=\gamma'\circ (\rho_{g-1}\times \id_{\mathcal{X}_g^2})\colon \mathcal{X}_g^{g+1}\to\mathcal{U}_g$. Note that $\gamma'$ and $\gamma$ depend on the choice of $\alpha$. The restriction of $\gamma$ to a fibre of $\pi_{g+1}$, that means to the $(g+1)$-th power of a compact and connected Riemann surface $X$ with a level $2$ structure inducing a theta characteristic $\alpha_X$, is
$$\gamma|_{X^{g+1}}\colon X^{g+1}\to \Jac(X),\quad (P_1,\dots,P_{g+1})\mapsto P_1+\dots+P_g-P_{g+1}-\alpha_X.$$
Fixing a Riemann surface $X\in\mathcal{M}_g$ and a point $Q\in X$ we obtain a map 
$$s_Q\colon X^g\to X^{g+1},\quad(P_1,\dots,P_g)\mapsto (P_1,\dots,P_g,Q),$$
which is a section of $pr^{g+1}|_{X^{g+1}}\colon X^{g+1}\to X^g$.
As shown in Section \ref{integraltheta}, we have
$$H(X)=\tfrac{1}{(g!)^2}\int_{X^g}\log\|\theta_{\alpha_X}\|(P_1+\dots+P_g-Q-\alpha_X)((\gamma|_{X^{g+1}})\circ s_Q)^*\nu^g.$$
A direct computation yields
\begin{align*}
&\tfrac{1}{(g!)^2}\int_{X^g}\log\|\theta_{\alpha_X}\|(P_1+\dots+P_g-Q-\alpha_X)((\gamma|_{X^{g+1}})\circ s_Q)^*\nu^g\\
=&\tfrac{1}{(g!)^2}\int_{pr_{g+1}|_{X^{g+1}}}\log\|\theta_{\alpha_X}\|(P_1+\dots+P_g-P_{g+1}-\alpha_X)(\gamma|_{X^{g+1}})^*\nu^g,
\end{align*}
which shows that the latter equals $H(X)$ and it is independent of the choice of the point $P_{g+1}$.

The restriction of $\omega_{\mathrm{Hdg}}$ to a fibre of $p$ is trivial and the restriction of $\omega_0$ to a fibre of $p$ equals $\nu$. Hence, we obtain
$$H(X)=\tfrac{1}{(g!)^2}\int_{pr_{g+1}}\log\|\theta_{\alpha}\|(P_1+\dots+P_g-P_{g+1}-\alpha)\gamma^*(\omega_0+\tfrac{1}{2}\omega_{\mathrm{Hdg}})^g.$$
Using this expression, we compute $\tfrac{1}{\pi i}\partial\overline{\partial} H(X)$
by applying the Laplace operator $\tfrac{1}{\pi i}\partial\overline{\partial}$ on $\mathcal{X}_g^{g+1}$:
\begin{align*}
&\tfrac{1}{\pi i}\partial\overline{\partial}\int_{pr_{g+1}}\log\|\theta_{\alpha}\|(P_1+\dots+P_g-P_{g+1}-\alpha)\gamma^*(\omega_0+\tfrac{1}{2}\omega_{\mathrm{Hdg}})^g\\
=&\int_{pr_{g+1}}\gamma^*(\omega_0+\tfrac{1}{2}\omega_{\mathrm{Hdg}})^{g+1}-\int_{pr_{g+1}}\gamma^*\left(\delta_{\Theta_{\alpha}} \right)\gamma^*(\omega_0+\tfrac{1}{2}\omega_{\mathrm{Hdg}})^g\\
=&\int_{pr_{g+1}}\gamma^*\omega_0^{g+1}+\tfrac{g+1}{2}\int_{pr_{g+1}}\gamma^*\omega_0^g\wedge\omega_{\mathrm{Hdg}}-\int_{pr_{g+1}} \gamma^*\left(\delta_{\Theta_{\alpha}}\right)\gamma^*\omega_0^g
\\&-\tfrac{g}{2}\int_{pr_{g+1}} \gamma^*(\delta_{\Theta_{\alpha}})\gamma^*\omega_0^{g-1}\wedge \omega_{\mathrm{Hdg}}.
\end{align*}

Since the restriction of $\omega_{\mathrm{Hdg}}$ to a fibre of $pr_{g+1}$ is trivial and for any principally polarised complex abelian variety $(A,\Theta)$ it holds $\int_A \nu_{(A,\Theta)}^g=\int_{\Theta}\nu_{(A,\Theta)}^{g-1}=g!$, we get
$$\tfrac{g+1}{2}\int_{pr_{g+1}}\gamma^*\omega_0^g\wedge \omega_{\mathrm{Hdg}}=\tfrac{(g+1)\cdot(g!)^2}{2}\omega_{\mathrm{Hdg}} \quad\text{and}$$
$$\tfrac{g}{2}\int_{pr_{g+1}}\gamma^*\left(\delta_{\Theta_{\alpha}}\right)\gamma^*\omega_0^{g-1}\wedge\omega_{\mathrm{Hdg}} =\tfrac{g\cdot(g!)^2}{2}\omega_{\mathrm{Hdg}}.$$
Therefore, we obtain
\begin{align}\label{laplaceH}
\tfrac{1}{\pi i}\partial\overline{\partial} H(X)=\tfrac{1}{2}\omega_{\mathrm{Hdg}}+\tfrac{1}{(g!)^2}\left(\int_{pr_{g+1}}\gamma^*\omega_0^{g+1}-\int_{pr_{g+1}}\gamma^* \left(\delta_{\Theta_{\alpha}}\right) \gamma^*\omega_0^{g}\right).
\end{align}
Thus, we have to compute the form $\gamma^*\omega_0$.

\subsection{Deligne pairings}
\label{secdeligne}
In this subsection we introduce the Deligne pairing for hermitian line bundles as it was defined by Deligne in \cite[Section 6]{Del85} and extended to arbitrary relative dimension by Elkik in \cite{Elk89}, see also \cite{Zha96}. We will use it to study the form $\gamma^*\omega_0$.

Let $q\colon \mathfrak{X}\to S$ be a smooth, flat and projective morphism of complex manifolds of pure relative dimension $n$, and let $\mathcal{L}_0,\dots, \mathcal{L}_n$ be hermitian line bundles on $\mathfrak{X}$.
Then the line bundle $\langle \mathcal{L}_0,\dots,\mathcal{L}_n\rangle(\mathfrak{X}/S)$ is the line bundle on $S$, which is locally generated by symbols $\langle l_0,\dots,l_n\rangle$, where the ${l_j}'s$ are sections of the respective ${\mathcal{L}_j}'s$ such that their divisors have no intersection, and if for some $0\le j \le n$ and some function $f$ on $\mathfrak{X}$ the intersection $\prod_{k\neq j}\mathrm{div}(l_k)=\sum_{i}n_i Y_i$ is finite over $S$ and it has empty intersection with $\mathrm{div}(f)$, then it holds the relation
$$\langle l_0,\dots,l_{j-1},f\cdot l_j,l_{j+1},\dots,l_n\rangle=\prod_i \mathrm{Norm}_{Y_i/S}(f)^{n_i}\langle l_0,\dots, l_n\rangle.$$
By induction we define a metric on $\langle \mathcal{L}_0,\dots,\mathcal{L}_n\rangle(\mathfrak{X}/S)$ such that
$$\log\|\langle l_0,\dots,l_n\rangle\|=\log\|\langle l_0,\dots,l_{n-1}\rangle\|(\mathrm{div}(l_n))+\int_{q}\log\|l_n\|\bigwedge_{i=0}^{n-1}c_1(\mathcal{L}_i),$$
where $c_1(\mathcal{L})$ denotes the first Chern form of a hermitian line bundle $\mathcal{L}$.

In the following, we list some properties, which can be found in \cite[Section 1]{Zha96}.
The Deligne pairing is multilinear and symmetric and it satisfies
\begin{align}\label{delignechern}
c_1(\langle \mathcal{L}_0,\dots,\mathcal{L}_n\rangle)=\int_q \bigwedge_{i=0}^n c_1(\mathcal{L}_i).
\end{align}
Further, let $\phi\colon \mathfrak{X}\to \mathcal{Y}$ be a smooth, flat and projective morphism of complex manifolds over $S$ with $m_1=\dim \mathcal{Y}/S$ and $m_2=\dim\mathfrak{X}/\mathcal{Y}$, $\mathcal{K}_0,\dots,\mathcal{K}_{m_2}$ hermitian line bundles on $\mathfrak{X}$ and $\mathcal{L}_1,\dots,\mathcal{L}_{m_1}$ hermitian line bundles on $\mathcal{Y}$. We have an isometry
\begin{align}\label{doublefamily}
&\langle \mathcal{K}_0,\dots, \mathcal{K}_{m_2},\phi^*\mathcal{L}_1,\dots,\phi^*\mathcal{L}_{m_1}\rangle(\mathfrak{X}/S)\\ \nonumber
\cong&\langle\langle \mathcal{K}_0,\dots, \mathcal{K}_{m_2}\rangle(\mathfrak{X}/\mathcal{Y}),\mathcal{L}_1,\dots,\mathcal{L}_{m_1}\rangle(\mathcal{Y}/S).
\end{align}
If $m_2=1$ and $\mathcal{K}_0=\phi^*\mathcal{L}_0$ for some hermitian line bundle $\mathcal{L}_0$ on $\mathcal{Y}$, we obtain
\begin{align}\label{delignedegree}
c_1(\langle \mathcal{K}_1,\phi^*\mathcal{L}_0,\dots,\phi^*\mathcal{L}_{n-1}\rangle(\mathfrak{X}/S))
=\deg(\mathcal{K}_1)\cdot c_1(\langle \mathcal{L}_0,\dots,\mathcal{L}_{n-1}\rangle(\mathcal{Y}/S)).
\end{align}
Moreover, for general $m_2$ and hermitian line bundles $\mathcal{L}_0,\dots,\mathcal{L}_{m_1+1}$ on $\mathcal{Y}$, we have the isometry
\begin{align}\label{delignetrivial}
\langle \mathcal{K}_1,\dots\mathcal{K}_{m_2-1},\phi^*\mathcal{L}_0,\dots,\phi^*\mathcal{L}_{m_1+1}\rangle(\mathfrak{X}/S)
=\O_S.
\end{align}
We will often omit $(\mathfrak{X}/S)$ in the notation and we will also use the shorter notation $\mathcal{L}_0^{\langle n+1\rangle}=\langle\mathcal{L}_0,\dots,\mathcal{L}_0\rangle$, where the $\mathcal{L}_0$ occurs $(n+1)$ times on the right hand side.

We apply this to the family $pr^{g+2}\colon\mathcal{X}_g^{g+2}\to\mathcal{X}_g^{g+1}$.
For any positive integers $j\le k$ we have a canonical section of $pr^{k+1}$:
$$s_{k+1,j}\colon \mathcal{X}_g^{k}\to\mathcal{X}_g^{k+1},\quad [X;P_1,\dots,P_{k}]\mapsto [X;P_1,\dots,P_{k},P_j].$$
We set $\mathcal{L}=\O(2s_{g+2,1}+\dots+2s_{g+2,g}-2s_{g+2,g+1})\otimes pr_{g+2}^* T$ as a line bundle on $\mathcal{X}_g^{g+2}$, where we write $T=T_{\mathcal{X}_g/\mathcal{M}_g[2]}$ for the relative tangent bundle.
The first Chern form of $\mathcal{L}$ vanishes if we restrict to any fibre of $pr^{g+2}\colon \mathcal{X}_g^{g+2}\to\mathcal{X}^{g+1}_g$. In particular, $\mathcal{L}$ is of degree $0$ on each fibre of $pr^{g+2}$, such that we obtain a section of the Jacobian bundle $\mathcal{U}_g\times_{\mathcal{A}_g[2]}\mathcal{X}^{g+1}_g\to \mathcal{X}^{g+1}_g$.
By definition this section equals $([2]\circ\gamma)\times\id_{\mathcal{X}_g^{g+1}}$, where $[2]$ denotes the multiplication with $2$ on $\mathcal{U}_g$.
By a result due to de Jong \cite[Proposition 6.3]{dJo16}, we have $c_1(\mathcal{L}^{\langle 2\rangle})=-2([2]\circ\gamma)^*\omega_0$. Thus, we can compute
\begin{align}\label{omegapullback}
\gamma^*\omega_0=\tfrac{1}{4}([2]\circ\gamma)^*\omega_0=-\tfrac{1}{8}c_1(\mathcal{L}^{\langle 2\rangle}).
\end{align}

Let $s$ be a section of $pr^{g+2}$ and $\mathcal{L}_0$ any hermitian line bundle on $\mathcal{X}_g^{g+2}$, which is fiberwise admissible. That means that the first Chern form of the restriction of $\mathcal{L}_0$ to any fibre of $pr^{g+2}$ is a multiple of $\mu$. Then
we obtain a canonical isometry $\langle\O(s),\mathcal{L}_0\rangle\cong s^*\mathcal{L}_0$. Hence, it follows
\begin{align}\label{delignetangent}
\langle \O(s_{g+2,j}),\O(s_{g+2,j})\rangle\cong s_{g+2,j}^*\O(s_{g+2,j})\cong pr_j^*T,
\end{align}
where the last isometry follows, since $s_{g+2,j}^*\O(s_{g+2,j})$ is the pullback of the line bundle $s_{2,1}^*\O(\Delta)$ by the projection $pr_{j}\colon \mathcal{X}_g^{g+1}\to\mathcal{X}_g$ to the $j$-th factor and $s_{2,1}$ is the diagonal embedding $\mathcal{X}_g\to\mathcal{X}_g^2$, such that $s_{2,1}^*\O(\Delta)\cong T$.
Moreover, we have for $j\neq k$
\begin{align}\label{delignedelta}
\langle \O(s_{g+2,j}),\O(s_{g+2,k})\rangle\cong s_{g+2,j}^*\O(s_{g+2,k})\cong pr_{j,k}^* \O(\Delta)
\end{align}
and for all $1\le j\le g+1$
$$\langle \O(s_{g+2,j}),pr_{g+2}^*T\rangle= s_{g+2,j}^*pr_{g+2}^*T=pr_j^*T.$$
Now we can express the line bundle $\mathcal{L}^{\langle 2\rangle}$ by
\begin{align}\label{equlinebundle}
\mathcal{L}^{\langle 2\rangle}\cong\left(\bigotimes_{j=1}^g pr_j^*T\otimes \bigotimes_{j=1}^g pr_{j,g+1}^*\O(\Delta)^{\vee}\otimes\bigotimes_{j<k}^{g} pr_{j,k}^*\O(\Delta)\right)^{\otimes 8}\otimes \left(pr_{g+2}^* T\right)^{\langle 2\rangle},\end{align}
where we denote $\mathscr{L}^{\vee}$ for the dual of a line bundle $\mathscr{L}$. 
We define $\widetilde{\mathcal{L}}$ by
$$\widetilde{\mathcal{L}}=\bigotimes_{j=1}^g pr_j^*T\otimes \bigotimes_{j=1}^g pr_{j,g+1}^*\O(\Delta)^{\vee}\otimes\bigotimes_{j<k}^{g} pr_{j,k}^*\O(\Delta),$$
such that $\mathcal{L}^{\langle 2\rangle}=\widetilde{\mathcal{L}}^{\otimes 8}\otimes\left(pr_{g+2}^* T\right)^{\langle 2\rangle}$.
It holds $c_1(pr_{g+2}^* T)=pr_{g+2}^*e^A$ and hence, we deduce by (\ref{delignechern}) that $c_1\left(\left(pr_{g+2}^* T\right)^{\langle 2\rangle}\right)=e_1^A$.
Since the restriction of $e_1^A$ to a fibre of $\pi_{g+1}$ is trivial and further, the restriction of $c_1(\widetilde{\mathcal{L}})$ to a fibre $X^{g+1}$ of $\pi_{g+1}$ is equal to $-(\gamma^*\omega_0)|_{X^{g+1}}=-(\gamma|_{X^{g+1}})^*\nu_{\Jac(X)}$, we get by (\ref{omegapullback})
\begin{align*}
\int_{pr_{g+1}}\gamma^*\omega_0^{g+1}&=\left(-\tfrac{1}{8}\right)^{g+1}\int_{pr_{g+1}}c_1(\mathcal{L}^{\langle 2\rangle})^{g+1}\\
&=(-1)^{g+1}\cdot\left(\int_{pr_{g+1}}c_1(\widetilde{\mathcal{L}})^{g+1}+\tfrac{g+1}{8} \int_{pr_{g+1}}c_1(\widetilde{\mathcal{L}})^g \wedge e_1^A\right)\\
&=(-1)^{g+1}\cdot\int_{pr_{g+1}}c_1(\widetilde{\mathcal{L}})^{g+1}-\tfrac{g+1}{8}\cdot(g!)^2 e_1^A.
\end{align*}

Next, we compute the second integral in equation (\ref{laplaceH}). Denote by $\mathcal{H}_g[2]$ the moduli space of hyperelliptic Riemann surfaces of genus $g$ with level $2$ structure. We restrict for the rest of this section to the open subspace $\mathcal{M}_g'=\mathcal{M}_g[2]\setminus\mathcal{H}_g[2]$ of $\mathcal{M}_g[2]$. In particular, we sloppily write $\mathcal{X}_g$ for the restriction $\mathcal{X}_g\times_{\mathcal{M}_g[2]}\mathcal{M}_g'$, $\gamma$ instead of $\gamma|_{\mathcal{X}_g\times_{\mathcal{M}_g[2]}\mathcal{M}_g'}$, etc. The singular locus $\Theta_{\alpha}^{sing}$ of the divisor $\Theta_{\alpha}$ has codimension $4$ in the restriction of $\mathcal{U}_g/\mathcal{A}_g[2]$ to $\mathcal{M}'_g$, and its preimage under the map
$$\gamma_{\Theta_{\alpha}}\colon\mathcal{X}_g^{g-1}\to\Theta_{\alpha}\quad [X;P_1,\dots,P_{g-1}]\mapsto [\Jac(X);P_1+\dots+P_{g-1}-\alpha]$$
has codimension $2$. This follows from the proof of \cite[Proposition 11.2.8]{BL04}. Hence, the 
points $[X;P_1+\dots+P_g-P_{g+1}]\in pr_{g+1}^{-1}([X;P_{g+1}])$ satisfying
$P_1+\dots+P_{g-1}\in \Theta_{\alpha}^{sing}$ form a subspace of the fibre $pr_{g+1}^{-1}([X;P_{g+1}])$ of dimension at most $(g-2)$. Since the current $\gamma^*\left(\delta_{\Theta_{\alpha}}\right)$ restricts the space for the integration to a space of dimension $g-1$, it is enough to integrate over the subspace where $P_1+\dots+P_{g-1}\notin \Theta_{\alpha}^{sing}$. 
Write $\widetilde{\mathcal{X}_g^{(g-1)}}$ for the subspace of $\mathcal{X}_g^{(g-1)}$, where $P_1+\dots+P_{g-1}\notin \Theta_{\alpha}^{sing}$.
The canonical involution on the universal Jacobian induces an involution $\sigma$ on $\widetilde{\mathcal{X}_g^{(g-1)}}$. This is given as follows:
If $(P_1,\dots,P_{g-1})$ denotes a section of $\widetilde{\mathcal{X}_g^{(g-1)}}\to \mathcal{M}'_g$, then $\sigma(P_1,\dots,P_{g-1})$ is the unique section $(R_1,\dots,R_{g-1})$ of $\widetilde{\mathcal{X}_g^{(g-1)}}\to \mathcal{M}'_g$, such that the divisor obtained by the sum $P_1+\dots+P_{g-1}+R_1+\dots+R_{g-1}$ represents the canonical bundle on $\mathcal{X}_g/\mathcal{M}'_g$.
Now the integral can be computed as follows
\begin{align}\label{equintegralrest}
\int_{pr_{g+1}}\gamma^* (\delta_{\Theta_{\alpha}}) \gamma^*\omega_0^{g}
=
&\sum_{j=1}^g\int_{pr_{g+1}}\delta_{\lbrace P_j=P_{g+1}\rbrace}\gamma^*\omega_0^g\nonumber
\\&+\int_{pr_{g+1}}\delta_{\lbrace P_g\in\sigma(P_1,\dots,P_{g-1})\rbrace}\gamma^*\omega_0^{g},
\end{align}
where $P_g\in \sigma(P_1,\dots,P_{g-1})$ means, that $\sigma(P_1,\dots,P_{g-1})=(R_1,\dots,R_{g-1})$ and $P_g=R_j$ for some $j\le g-1$. For the terms in the sum we get by symmetry
$$\int_{pr_{g+1}}\delta_{\lbrace P_j=P_{g+1}\rbrace}\gamma^*\omega_0^g=\int_{pr_{g+1}}\delta_{\lbrace P_g=P_{g+1}\rbrace}\gamma^*\omega_0^g=\int_{pr_{g}}s_{g+1,g}^*\gamma^*\omega_0^g,$$
where the last integral is with respect to the fibres of $pr_g\colon \mathcal{X}_g^g\to\mathcal{X}_g$.
We define the following line bundle on $\mathcal{X}_g^{g}$
$$\widetilde{\mathcal{L}}'=\bigotimes_{j=1}^{g-1} pr_j^*T\otimes\bigotimes_{j<k}^{g-1} pr_{j,k}^*\O(\Delta)$$
and set $\mathcal{L}'=\widetilde{\mathcal{L}}'^{\otimes 8}\otimes T^{\langle 2\rangle}$.
Since it holds $s_{g+1,g}^*(\mathcal{L}^{\langle 2\rangle})=\mathcal{L}'$, we can conclude 
$s_{g+1,g}^*\gamma^*\omega_0=s_{g+1,g}^*(-\tfrac{1}{8}c_1(\mathcal{L}^{\langle 2\rangle}))=-\tfrac{1}{8}c_1(\mathcal{L}')$.

Thus, we compute
$$\int_{pr_g} s_{g+1,g}^*\gamma^*\omega_0^g=(-1)^g\cdot \left(\int_{pr_g}c_1(\widetilde{\mathcal{L}}')^g+\tfrac{g}{8}\int_{pr_g}c_1(\widetilde{\mathcal{L}}')^{g-1}\wedge e_1^A\right).$$
Since the restriction of $e_1^A$ to a fibre of $\pi_g\colon \mathcal{X}_g^g\to \mathcal{M}'_g$ is trivial and the restriction of $c_1(\widetilde{\mathcal{L}}')$ to a fibre $X^{g-1}$ of $pr_g$ is equal to $-\Phi_{g-1}^*\nu_{\Jac(X)}$, where $\Phi_{g-1}$ is the map
$$\Phi_{g-1}\colon X^{g-1}\to \Jac(X),\quad(P_1,\dots,P_{g-1})\mapsto P_1+\dots+P_{g-1}-\alpha_X,$$
we conclude
$$\int_{pr_g} s_{g+1,g}^*\gamma^*\omega_0^g=(-1)^g\cdot\int_{pr_g}c_1(\widetilde{\mathcal{L}}')^g- \tfrac{g}{8}\cdot (g-1)!\cdot g!\cdot e_1^A.$$

Next, we compute the second term of the right hand side of (\ref{equintegralrest}). For this purpose, we define the map
\begin{align*}
\widetilde{\sigma}\colon \widetilde{\mathcal{X}_g^{(g-1)}}\times_{\mathcal{M}'_g}\mathcal{X}_g^2&\to \widetilde{\mathcal{X}_g^{(g-1)}}\times_{\mathcal{M}'_g}\mathcal{X}_g^2,\\
((P_1,\dots,P_{g-1}),P_g,P_{g+1})&\mapsto (\sigma(P_1,\dots,P_{g-1}),P_g,P_{g+1}).
\end{align*}
If we shortly write $\gamma_{\sigma}=\gamma'\circ\widetilde{\sigma}\circ(\rho_{g-1}\times\id_{\mathcal{X}_g^2})$, we obtain
\begin{align}\label{secondintegral}
\int_{pr_{g+1}}\delta_{\lbrace P_g\in\sigma(P_1+\dots+P_{g-1})\rbrace}\gamma^*\omega_0^{g}
=\sum_{j=1}^{g-1}\int_{pr_{g+1}}\delta_{\lbrace P_j=P_g\rbrace} \gamma_{\sigma}^*\omega_0^g.
\end{align}
Since $\gamma_{\sigma}$ is the map
\begin{align*}
\gamma_{\sigma}\colon\mathcal{X}_g^{g+1}&\to \mathcal{U}_g,\\
[X;P_1,\dots,P_{g+1}]&\mapsto [\Jac (X);-P_1-\dots-P_{g-1}+P_g-P_{g+1}+\alpha_X],
\end{align*}
we again apply \cite[Proposition 6.3]{dJo16} to compute $\gamma_{\sigma}^*\omega_0=-\tfrac{1}{8}c_1(\mathcal{N})$,
where $\mathcal{N}$ denotes the line bundle $\mathcal{N}=\widetilde{\mathcal{N}}^{\otimes 8}\otimes T^{\langle 2\rangle}$
with
$$\widetilde{\mathcal{N}}=\bigotimes_{\gf{1\le j\le g+1}{j\neq g}}pr_j^*T\otimes \bigotimes_{\gf{1\le j<k\le g+1}{j\neq g,k\neq g}}pr_{j,k}^*\O(\Delta)\otimes\bigotimes_{\gf{1\le j\le g+1}{j\neq g}}pr_{j,g}\O(\Delta)^{\vee}.$$

Further, we denote the following line bundle on $\mathcal{X}_g^g$
$$\widetilde{\mathcal{N}'}=\bigotimes_{\gf{1\le j\le g}{j\neq g-1}} pr_j^* T\otimes \bigotimes_{\gf{1\le j<k\le g}{j\neq g-1, k\neq g-1}} pr_{j,k}^*\O(\Delta)$$
and set $\mathcal{N}'=\widetilde{\mathcal{N}'}^{\otimes 8}\otimes T^{\langle 2\rangle}$.
Let $s$ be the section of $pr^{g}\colon \mathcal{X}_g^{g+1}\to\mathcal{X}_g^g$ defined by
$$s\colon \mathcal{X}_g^{g}\to\mathcal{X}_g^{g+1},\quad [X;P_1,\dots,P_g]\mapsto[X;P_1,\dots,P_{g-2},P_{g-1},P_{g-1},P_g].$$
As for $\mathcal{L}'$, we obtain 
$(\gamma_{\sigma}\circ s)^*\omega_0=-\tfrac{1}{8}c_1(\mathcal{N}').$
Since $c_1(\mathcal{N}')$ does not depend on the $(g-1)$-th factor of $\mathcal{X}_g^g$, we conclude
$$\int_{pr_{g+1}}\delta_{\lbrace P_{g-1}=P_g\rbrace}\gamma_{\sigma}^*\omega_0^g=\int_{pr_g}s^*\gamma_{\sigma}^*\omega^g=-\tfrac{1}{8}\int_{pr_g}c_1(\mathcal{N}')^g=0.$$
By symmetry the entire sum in (\ref{secondintegral}) vanishes.

By (\ref{delignechern}) we obtain $\int_{pr_{g+1}}c_1(\widetilde{\mathcal{L}})^{g+1}=c_1\left(\widetilde{\mathcal{L}}^{\langle g+1\rangle}\right)$, where the Deligne pairing is with respect to the family $pr_{g+1}\colon \mathcal{X}_g^{g+1}\to\mathcal{X}_g$. In the same way, we have $\int_{pr_{g}}c_1(\widetilde{\mathcal{L}}')^{g}=c_1\left(\widetilde{\mathcal{L}}'^{\langle g\rangle}\right)$, where the Deligne pairing is with respect to the family $pr_g\colon \mathcal{X}_g^g\to \mathcal{X}_g$.
If we apply all results from this section to the equation (\ref{laplaceH}), we obtain the following relation
\begin{align}\label{laplaceHmain}
\tfrac{1}{\pi i} \partial\overline{\partial}H(X)=\tfrac{1}{2}\omega_{\mathrm{Hdg}}-\tfrac{1}{8}e_1^A+\tfrac{(-1)^{g+1}}{(g!)^2}\left(c_1(\widetilde{\mathcal{L}}^{\langle g+1\rangle})+g\cdot
c_1(\widetilde{\mathcal{L}}'^{\langle g\rangle})\right)
\end{align}
of forms on $\mathcal{M}'_g$.
Thus, we have to calculate the forms $c_1(\widetilde{\mathcal{L}}^{\langle g+1\rangle})$ and 
$c_1(\widetilde{\mathcal{L}}'^{\langle g\rangle})$.

\subsection{Graphs and Terms}
\label{secgraphs}
We compute $c_1(\widetilde{\mathcal{L}}^{\langle g+1\rangle})$ and $c_1(\widetilde{\mathcal{L}}'^{\langle g\rangle})$ by associating a graph to each term in the expansions of the powers $\widetilde{\mathcal{L}}^{\langle g+1\rangle}$ and $\widetilde{\mathcal{L}}'^{\langle g\rangle}$.
First, we define for $n\le g$ the sets
$$\mathfrak{L}_n=\lbrace pr_{j}^*T,pr_{j,g+1}^*\O(\Delta), pr_{k,l}^*\O(\Delta)| 1\le j\le n, 1\le k<l\le n\rbrace,$$
$$\mathfrak{L}'_n=\lbrace pr_{j}^*T,pr_{k,l}^*\O(\Delta)| 1\le j\le n, 1\le k<l\le n\rbrace.$$

For any $(n+1)$-tuple $(\mathcal{L}_0,\dots,\mathcal{L}_n)\in \mathfrak{L}_n^{n+1}$ the graph $\Gamma(\mathcal{L}_0,\dots,\mathcal{L}_n)$ is defined as follows: The set of vertices is $\lbrace v_1,\dots,v_n,v_{g+1}\rbrace$ and there are $(n+1)$ edges, for every $0\le j\le n$ either the loop $e_j=(v_k,v_k)$ if it holds $\mathcal{L}_j=pr_k^*T$ or the edge $e_j=(v_k,v_l)$ if $\mathcal{L}_j=pr_{k,l}^*\O(\Delta)$.
Further, we define the graph $\Gamma'(\mathcal{L}_0,\dots,\mathcal{L}_{n})$ for any $(n+1)$-tuple $(\mathcal{L}_0,\dots,\mathcal{L}_{n})\in \mathfrak{L}_n'^{n+1}$ as the graph 
$\Gamma(\mathcal{L}_0,\dots,\mathcal{L}_{n})$ without the vertex $v_{g+1}$.

\begin{Lem}\label{lemmaconstants}
There are constants $a_1, a_2, a_3,a'_1,a'_2,a'_3\in \Z$ such that we have the following equalities of forms on $\mathcal{X}_g$:
\begin{enumerate}[(a)]
\item $c_1\left(\widetilde{\mathcal{L}}^{\langle g+1\rangle}\right)=a_1\cdot \int_{\pi_2} h^3+a_2\cdot e_1^A+a_3\cdot e^A$,
\item $c_1\left(\widetilde{\mathcal{L}}'^{\langle g\rangle}\right)=a'_1\cdot \int_{\pi_2} h^3+a'_2\cdot e_1^A+a'_3\cdot e^A$.
\end{enumerate}
\end{Lem}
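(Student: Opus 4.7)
My plan is to expand the Deligne pairing multilinearly via (\ref{delignechern}):
\[
c_1\bigl(\widetilde{\mathcal{L}}^{\langle g+1\rangle}\bigr) = \int_{pr_{g+1}} c_1(\widetilde{\mathcal{L}})^{g+1} = \sum_{(\mathcal{L}_0,\dots,\mathcal{L}_g) \in \mathfrak{L}_g^{g+1}} \epsilon(\mathcal{L}_0,\dots,\mathcal{L}_g) \int_{pr_{g+1}} \prod_{j=0}^g c_1(\mathcal{L}_j),
\]
where the sign $\epsilon \in \{\pm 1\}$ records the number of dual factors $pr_{j,g+1}^*\O(\Delta)^\vee$ in the tuple. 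Each summand is encoded by the graph $\Gamma(\mathcal{L}_0,\dots,\mathcal{L}_g)$; tuples yielding the same labeled graph produce identical integrals, so one may collect terms by graph with a combinatorial multiplicity.

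To evaluate each integral, I would factor $pr_{g+1}$ as a tower $\mathcal{X}_g^{g+1} \to \mathcal{X}_g^g \to \cdots \to \mathcal{X}_g$ of projections of relative dimension one, each forgetting one of the factors $v_1,\dots,v_g$ in turn. Iterating (\ref{doublefamily}) reduces the global Deligne pairing to a tower of single-variable pairings. At the step forgetting $v_j$, the line bundles incident to $v_j$ in $\Gamma$ are absorbed into a relative-dimension-one Deligne pairing which, by (\ref{delignetangent}) and (\ref{delignedelta}), becomes a pullback of $T$ (from a loop) or of $\O(\Delta)$ (from an edge) on the remaining product; line bundles not incident to $v_j$ descend as pullbacks. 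By the triviality property (\ref{delignetrivial}), any step that leaves too many of the remaining bundles as pullbacks from a smaller base contributes trivially. A balance count (there are $g+1$ edges among $g+1$ vertices) then forces all surviving graphs to have each connected component unicyclic, with a loop counted as a length-one cycle.

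For each surviving graph one reads off the resulting $(1,1)$-form on $\mathcal{X}_g$, and only three types of forms arise: a residual tangent-bundle contribution at $v_{g+1}$ yields a $\Z$-multiple of $e^A$ on $\mathcal{X}_g$; a cyclic contribution absorbed entirely on $\{v_1,\dots,v_g\}$ descends along $\pi$ and, depending on whether it is built from tangent or diagonal Chern classes, gives a $\Z$-multiple of $e_1^A$ or of $\int_{\pi_2} h^3$ on $\mathcal{M}_g[2]$. Summing all contributions with their signs and multiplicities produces the claimed identity with $a_1,a_2,a_3 \in \Z$; integrality is transparent at the level of each individual graph-indexed summand. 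Part (b) follows by the identical analysis applied to $\widetilde{\mathcal{L}}'^{\langle g\rangle}$ over $pr_g\colon \mathcal{X}_g^g \to \mathcal{X}_g$; since $\mathfrak{L}'_{g-1}$ has no edges to $v_g$, every surviving contribution descends from $\mathcal{M}_g[2]$.

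The main obstacle is the careful combinatorial classification of surviving unicyclic graphs (together with their tree branches) and the verification that every non-trivial pairing indeed collapses to a $\Z$-multiple of exactly one of the three basic forms $e^A$, $e_1^A$, $\int_{\pi_2} h^3$, rather than to some further natural $(1,1)$-form on $\mathcal{X}_g$ or $\mathcal{M}_g[2]$.
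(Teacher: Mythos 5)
Your overall framework matches the paper's: multilinear expansion via (\ref{delignechern}), encoding each summand by a graph, and reducing by a tower of projections with the aid of (\ref{doublefamily}), (\ref{delignedegree}), (\ref{delignetangent}), (\ref{delignedelta}) and (\ref{delignetrivial}). But there is a genuine gap in the classification step, and it undermines the argument as stated.

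The Euler-characteristic balance count ($g+1$ edges, $g+1$ vertices including $v_{g+1}$) only gives $\sum_j g_j = r$, where $g_j$ is the first Betti number of the $j$-th connected component and $r$ is the number of components. It does \emph{not} force each $g_j = 1$. By (\ref{delignetrivial}) a tree component \emph{not} containing $v_{g+1}$ kills the contribution, but the component $\Gamma_1$ containing $v_{g+1}$ may itself be a tree ($g_1 = 0$) while some other component has first Betti number $2$. The paper's proof treats exactly these two surviving cases: (i) all $g_j = 1$, in which case $\Gamma_1$ contracts to a single loop at $v_{g+1}$ and yields an integer multiple of $e^A$, with all other (unicyclic) components contributing only multiplicative degree factors via (\ref{delignedegree}); and (ii) $g_1 = 0$ and some $g_k = 2$, in which case $\Gamma_k$ contracts to one of the three bicyclic graphs — two loops on one vertex, two vertices each with a loop joined by an edge, or a theta graph — which yield $e_1^A$, $e_1^A$, and $\int_{\pi_2} h^3$ respectively, while the tree $\Gamma_1$ and the remaining unicyclic components give degree factors.

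Relatedly, your claim that a ``cyclic contribution absorbed entirely on $\{v_1,\dots,v_g\}$'' gives $e_1^A$ or $\int_{\pi_2} h^3$ is incorrect for a unicyclic component: such a component collapses to a single loop $pr_j^*T$ and (\ref{delignedegree}) turns it into the integer $\deg T_{\mathcal{X}_g/\mathcal{M}_g[2]} = 2-2g$. The forms $e_1^A$ and $\int_{\pi_2} h^3$ arise precisely from the Betti-number-$2$ components that your ``all components unicyclic'' claim excludes. So while the intended conclusion (only $e^A$, $e_1^A$, $\int_{\pi_2}h^3$ occur) is right, the classification as written would fail to produce any $e_1^A$ or $\int_{\pi_2}h^3$ contributions at all; you need the case $g_1 = 0$, $g_k = 2$ to recover them.
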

\begin{proof}
We only prove (a). The proof of (b) can be done in a very similar way.
By linearity, it is enough to show
$$c_1(\langle \mathcal{L}_0,\dots, \mathcal{L}_g\rangle) \in \Z\cdot\int_{\pi_2} h^3+\Z\cdot e^A + \Z\cdot e_1^A$$
for all $\mathcal{L}_0,\dots,\mathcal{L}_g\in \mathfrak{L}_g$. We denote by $\Gamma_1,\dots, \Gamma_r$ the connected components of the graph $\Gamma(\mathcal{L}_0,\dots\mathcal{L}_g)$, where $\Gamma_1$ is the connected component containing the vertex $v_{g+1}$. We write $g_j$ for the first Betti number of $\Gamma_j$ for $1\le j\le r$.
We have $\sum_{j=1}^r g_j=r$. If we had $g_j=0$ for some $j\ge 2$, we would obtain $c_1(\langle \mathcal{L}_0,\dots, \mathcal{L}_g\rangle)=0$ by (\ref{delignetrivial}). Hence, we distinguish the following two cases:
\begin{itemize}
\item
In the first case we have $g_j=1$ for all $1\le j\le r$. By symmetry we can assume that the edges contained in $\Gamma_1$ are the edges associated to $\mathcal{L}_0,\dots,\mathcal{L}_q$ and that $\mathcal{L}_0,\dots,\mathcal{L}_q\in\mathfrak{L}_q$. If $q<g$, we factorize the family $pr_{g+1}\colon \mathcal{X}^{g+1}_g\to\mathcal{X}_g$ over $pr_{1,\dots,q+1,g+1}\colon \mathcal{X}^{g+1}_g\to\mathcal{X}_g^{q+2}$.
Then we obtain by (\ref{doublefamily})
\begin{align*}
\langle \mathcal{L}_0,\dots,\mathcal{L}_g\rangle
=\langle\mathcal{L}_0,\dots,\mathcal{L}_q,\langle \mathcal{L}_{q+1},\dots,\mathcal{L}_{g}\rangle(\mathcal{X}_g^{g+1}/\mathcal{X}_g^{q+2})\rangle (\mathcal{X}_g^{q+2}/\mathcal{X}_g).
\end{align*}
If we again factorize the family $pr_{q+2}\colon \mathcal{X}^{q+2}_g\to\mathcal{X}_g$ by $pr^{q+1}\colon \mathcal{X}^{q+2}_g\to\mathcal{X}_g^{q+1}$,
we can apply (\ref{delignedegree}) to the right hand side of the equality, such that we get
$$c_1(\langle \mathcal{L}_0,\dots,\mathcal{L}_g\rangle)=\deg(\langle \mathcal{L}_{q+1},\dots,\mathcal{L}_{g}\rangle)c_1(
\langle\mathcal{L}_0,\dots,\mathcal{L}_q\rangle).$$

Hence, we only have to consider $\langle \mathcal{L}_0,\dots,\mathcal{L}_q\rangle$. The graph $\Gamma_1=\Gamma(\mathcal{L}_0,\dots,\mathcal{L}_q)$ is connected and has first Betti number $g_1=1$. If $v_j$ with $1\le j\le q$ is a vertex of $\Gamma_1$ with $\deg(v_j)=1$, then we may assume, that $e_q$ is the unique edge connected to $v_j$ and we obtain by (\ref{delignedegree})
$$c_1(\langle \mathcal{L}_0,\dots,\mathcal{L}_q\rangle(\mathcal{X}_g^{q+1}/\mathcal{X}_g))=\deg(\mathcal{L}_q)\cdot c_1(\langle \mathcal{L}_0,\dots,\mathcal{L}_{q-1}\rangle(\mathcal{X}_g^q/\mathcal{X}_g)),$$
where we factorize the family $pr_{q+1}\colon\mathcal{X}_g^{q+1}\to\mathcal{X}_g$ by $pr^j\colon\mathcal{X}_g^{q+1}\to\mathcal{X}_g^q$. The associated graph $\Gamma(\mathcal{L}_0,\dots,\mathcal{L}_{q-1})$ is obtained from $\Gamma_1$ by removing the vertex $v_j$ and the edge $e_q$.

If $v_j$ with $1\le j\le q$ is a vertex of $\Gamma_1$ with $\deg(v_j)=2$, we may assume, that $e_{q-1}$ and $e_q$ are the edges connected to $v_j$. Now we get by (\ref{doublefamily})
\begin{align*}
&\langle\mathcal{L}_0,\dots,\mathcal{L}_q\rangle(\mathcal{X}_g^{q+1}/\mathcal{X}_g)\\
=&\langle\mathcal{L}_0,\dots,\mathcal{L}_{q-2},\langle \mathcal{L}_{q-1},\mathcal{L}_q\rangle(\mathcal{X}_g^{q+1}/\mathcal{X}^q_g)\rangle (\mathcal{X}_g^q/\mathcal{X}_g),
\end{align*}
where we again factorize the family $pr_{q+1}\colon\mathcal{X}_g^{q+1}\to\mathcal{X}_g$ by the projection $pr^j\colon\mathcal{X}_g^{q+1}\to\mathcal{X}_g^q$. The line bundles $\mathcal{L}_{q-1}$ and $\mathcal{L}_q$ have to be equal to $pr_{k_1,j}^*\O(\Delta)$, respectively $pr_{k_2,j}^*\O(\Delta)$, for some numbers $k_1, k_2\in\lbrace 1,\dots,q,g+1\rbrace$.
Hence, we have by a similar computation as for (\ref{delignetangent}) and (\ref{delignedelta})
$$\langle\mathcal{L}_{q-1},\mathcal{L}_q\rangle=\langle pr_{k_1,j}^*\O(\Delta), pr_{k_2,j}^*\O(\Delta)\rangle=pr_{k_1,k_2}^*\O(\Delta)$$
if $k_1\neq k_2$, and
$$\langle\mathcal{L}_{q-1},\mathcal{L}_q\rangle=\langle pr_{k_1,j}^*\O(\Delta), pr_{k_1,j}^*\O(\Delta)\rangle=pr_{k_1}^*T$$
if $k_1=k_2$. Thus, the associated graph $\Gamma(\mathcal{L}_0,\dots,\mathcal{L}_{q-2},\langle\mathcal{L}_{q-1},\mathcal{L}_q\rangle)$ is well defined and it arises from $\Gamma_1$ by removing the vertex $v_j$ and replacing the edges $e_{q-1}$ and $e_q$ by an edge connecting the two not necessarily different neighbours of $v_j$. Therefore, we can assume that the vertices $v_1,\dots, v_q$ of $\Gamma_1$ have degree at least $3$. This is only possible if $\Gamma_1$ only consists of the vertex $v_{g+1}$ and the loop $e_0=(v_{g+1},v_{g+1})$: 
$$\xygraph{!{(0,0)}*+!(-.3,.11){\bullet v_{g+1}}="a" "a"-@`{p+(-1,1.5),p+(-1,-1.5)} "a"_{e_0}}.$$
This means, that we always have in this case
$$c_1(\langle\mathcal{L}_0,\dots,\mathcal{L}_g\rangle)=n\cdot c_1(T)=n\cdot e^A$$
for some $n\in \Z$.
\item
The second case can be handled very similarly to the first one.
Here, we have $g_1=0$, $g_k=2$ for some $2\le k\le r$ and $g_j=1$ for $j\notin \lbrace 1,k\rbrace$. 
Again, we may assume by symmetry that the edges contained in $\Gamma_k$ are the edges associated to $\mathcal{L}_0,\dots,\mathcal{L}_q$ and that $\mathcal{L}_0,\dots,\mathcal{L}_q\in \mathfrak{L}'_q$. As in the first case we get by (\ref{doublefamily}) and (\ref{delignedegree})
$$c_1(\langle \mathcal{L}_0,\dots,\mathcal{L}_g\rangle)=\deg(\langle \mathcal{L}_{q+1},\dots,\mathcal{L}_g\rangle)\cdot c_1(\langle \mathcal{L}_0,\dots,\mathcal{L}_q\rangle).$$
By the same arguments as in the first case, we can reduce to the case, where the vertices $v_1,\dots,v_q$ of $\Gamma_k$ have degree at least $3$. But these are all vertices of $\Gamma_k$. Moreover, $\Gamma_k$ is connected and its first Betti number is $2$. Hence, there are up to permutations only the following possibilities for $\Gamma_k$:
$$\text{(a)}\xygraph{!{(0,0)}*+!(-.15,.11){\bullet v_{1}}="a" "a"-@`{p+(-1,1.5),p+(-1,-1.5)} "a"_{e_0} "a"-@`{p+(1,1.5),p+(1,-1.5)} "a"^{e_1}} \quad\text{(b)}\xygraph{!{(0,0)}*+!(.15,.11){v_1 \bullet}="a" !{(1.0,0)}*+!(-.15,.11){\bullet v_{2}}="b" "a"-@`{p+(-1,1.5),p+(-1,-1.5)} "a"_{e_0} "a"-"b"^{e_1} "b"-@`{p+(1,1.5),p+(1,-1.5)} "b"^{e_2} }$$  $$\quad\text{(c)}\xygraph{!{(0,0)}*+!(.15,.11){v_1 \bullet}="a" !{(1.5,0)}*+!(-.15,.11){\bullet v_{2}}="b" "a"-@/^0.6cm/"b"^{e_0} "a"-"b"^{e_1} "a"-@/_0.6cm/"b"_{e_2} }.$$

The graph (a) corresponds to $q=1$ and $\mathcal{L}_0=\mathcal{L}_1=T$. In this case we have $c_1(\langle\mathcal{L}_0,\mathcal{L}_1\rangle)=e_1^A$ by (\ref{delignechern}). The graph (b) corresponds to $q=2$, $\mathcal{L}_0=pr_1^*T$, $\mathcal{L}_1=pr_{1,2}^*\O(\Delta)$ and $\mathcal{L}_2=pr_2^*T$.
We can apply (\ref{doublefamily}) to obtain
\begin{align*}
\langle pr_1^*T,pr_{1,2}^*\O(\Delta), pr_2^*T\rangle=\langle\langle  pr_1^*T,pr_{1,2}^*\O(\Delta)\rangle, 
T\rangle,
\end{align*}
where we factorize the family $\pi_2\colon \mathcal{X}_g^2\to\mathcal{M}'_g$ by $pr_{2}\colon\mathcal{X}_g^2\to\mathcal{X}_g$. Since
$$\langle pr_1^*T,pr_{1,2}^*\O(\Delta)\rangle= T,$$
we again conclude $c_1(\langle\mathcal{L}_0,\mathcal{L}_1,\mathcal{L}_2\rangle)=e_1^A$.
Finally, the graph (c) corresponds to $q=2$ and $\mathcal{L}_0=\mathcal{L}_1=\mathcal{L}_2=pr_{1,2}^*\O(\Delta)$. Hence, we can again apply (\ref{delignechern}) to obtain
$c_1(\langle\mathcal{L}_0,\mathcal{L}_1,\mathcal{L}_2\rangle)=\int_{\pi_2} h^3$.
\end{itemize}
 \end{proof}

By the lemma and formula (\ref{laplaceHmain}), we have the following identity of forms on $\mathcal{M}_g'$
\begin{align}\label{laplaceH2}
\tfrac{1}{\pi i}\partial\overline{\partial}H(X)=\tfrac{1}{2}\omega_{\mathrm{Hdg}}+b_1\cdot \int_{\pi_2} h^3+b_2\cdot e_1^A +b_3 \cdot e^A,
\end{align}
where $X$ is non-hyperelliptic and $b_1,b_2,b_3\in\Q$ are constants depending only on $g$. 
Since $H(X)$ does not depend on the choice of the point $P_{g+1}$, we must have $b_3=0$.
Comparing the lemma with equation (\ref{laplaceHmain}), we obtain $b_1=\tfrac{(-1)^{g+1}}{(g!)^2}(a_1+g\cdot a'_1)$ and $b_2=\tfrac{(-1)^{g+1}}{(g!)^2}(a_2+g\cdot a'_2)-\tfrac{1}{8}$. Next, we compute the numbers $a_1,a'_1,a_2$ and $a'_2$.
\begin{Lem}\label{lema1}
We have $a_1=0$ and $a'_1=\tfrac{g!(g-1)!}{12}(-1)^{g-1}$ for the constants $a_1$ and $a'_1$ in Lemma \ref{lemmaconstants}. In particular, it holds $b_1=\tfrac{1}{12}$.
\end{Lem}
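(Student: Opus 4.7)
The constants $a_1$ and $a'_1$ are by definition the coefficients of $\int_{\pi_2}h^3$ in $c_1(\widetilde{\mathcal{L}}^{\langle g+1\rangle})$ and $c_1(\widetilde{\mathcal{L}}'^{\langle g\rangle})$, and I plan to evaluate them by enumerating the terms in the multinomial expansions of $c_1(\widetilde{\mathcal{L}})^{g+1}$ and $c_1(\widetilde{\mathcal{L}}')^g$ that produce this coefficient under the reduction scheme of Lemma~\ref{lemmaconstants}. Each summand corresponds to a labeled multigraph with $g+1$ (respectively $g$) edges on the vertex set $\{v_1,\ldots,v_g,v_{g+1}\}$ (respectively $\{v_1,\ldots,v_{g-1}\}$), carrying a multinomial weight and, in the first case only, a sign $(-1)^{\#\{\text{edges to }v_{g+1}\}}$ inherited from $\O(\Delta)^\vee$. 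By the proof of Lemma~\ref{lemmaconstants}, a summand contributes to the $\int_{\pi_2}h^3$-part precisely when exactly one connected component is a \emph{pseudo-theta}---i.e.\ a Betti-$2$ subgraph whose iterated degree-$1$ and degree-$2$ simplifications of (\ref{delignedegree}) and (\ref{doublefamily}) yield the theta graph (c)---while all other components have first Betti number $1$ and therefore reduce to a single loop contributing the factor $\deg T=2-2g$. Along the way, every pendant-edge contraction contributes $\pm 1$ according to $\deg\O(\Delta)=1$ or $\deg\O(\Delta)^\vee=-1$.

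For $a'_1$ the sum of multinomial weights times $(2-2g)^{c(G)}$, with $c(G)$ the number of unicyclic components disjoint from the pseudo-theta, runs over admissible labeled multigraphs $G$ on $g-1$ vertices with $g$ edges,
\[
a'_1 \;=\; \sum_{G}\frac{g!}{\prod_e m_e(G)!}\,(2-2g)^{c(G)}.
\]
I would classify the pseudo-theta shapes by the chain lengths $(l_1,l_2,l_3)$ of the three arcs between the two principal vertices together with pendant attachments, and evaluate the sum via an exponential generating-function argument for labeled unicyclic graphs in the spirit of Cayley's tree formula, collapsing it to the closed form $\tfrac{g!(g-1)!}{12}(-1)^{g-1}$. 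As sanity checks, for $g=3$ only the pure theta on two vertices contributes, giving $a'_1=1$; for $g=4$ the three families (subdivided theta, theta-with-pendant-tail, theta-plus-disjoint-loop) contribute $3\cdot 12 + 6\cdot 4 + 3\cdot 4\cdot(-6)=-12$, matching the formula.

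For $a_1$ the same scheme is applied on the augmented vertex set, but with the key new feature that $v_{g+1}$ is never contracted and that every edge incident to it carries sign $-1$; concretely, if $\Gamma_1\ni v_{g+1}$ is the tree component, then the successive $\O(\Delta)^\vee$-leaf contractions along $\Gamma_1$ produce a total sign $(-1)^{\deg_{\Gamma_1}(v_{g+1})}$. Grouping by the structure of $\Gamma_1$ and the location of the pseudo-theta component, the signed contributions cancel pairwise and yield $a_1=0$; for $g=3$ this is the explicit identity $60-12-48=0$ corresponding to $(V_1,r)=(1,2),(2,2),(1,3)$, and in general it follows from the signed version of the same generating-function identity. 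Combining,
\[
b_1 \;=\; \frac{(-1)^{g+1}}{(g!)^2}\bigl(a_1 + g\,a'_1\bigr) \;=\; \frac{(-1)^{g+1}}{(g!)^2}\cdot g\cdot \frac{g!(g-1)!}{12}(-1)^{g-1} \;=\; \frac{1}{12}.
\]
The main obstacle is the combinatorial evaluation: arranging the subdivision-plus-pendant data of pseudo-theta components together with the reduction signs and the $(2-2g)^{c(G)}$ weights into a sum that can be recognized as a closed form (for $a'_1$) or as an exact cancellation (for $a_1$) is where the bulk of the work lies.
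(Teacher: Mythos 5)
Your conceptual framework matches the paper's: terms in the expansion correspond to labeled multigraphs, a term contributes to the $\int_{\pi_2}h^3$-coefficient exactly when one component has first Betti number $2$ and reduces by the degree-$1$/degree-$2$ moves of (\ref{delignedegree}) and (\ref{doublefamily}) to the ``theta'' graph (c), the remaining Betti-$1$ components reduce to loops contributing $\deg T=2-2g$, and for $a_1$ the tree component $\Gamma_1\ni v_{g+1}$ contributes the sign $(-1)^{\deg_{\Gamma_1}(v_{g+1})}$. Your small-$g$ checks ($a'_1=1$ at $g=3$, $a'_1=-12$ at $g=4$, $a_1=60-12-48=0$ at $g=3$) are correct, and the final derivation of $b_1=\tfrac{1}{12}$ from the two constants is right.

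However, there is a genuine gap: the two central evaluations are asserted, not carried out. For $a'_1$ you say you ``would'' classify pseudo-theta shapes by arc lengths and pendant data and ``evaluate the sum via an exponential generating-function argument in the spirit of Cayley's tree formula, collapsing it to the closed form'' --- but no such identity is stated, let alone proved. For $a_1$ you claim ``the signed contributions cancel pairwise,'' yet no pairing is exhibited, and your own $g=3$ check $60-12-48=0$ is manifestly not a pairwise cancellation (it is a three-term identity). You acknowledge the gap yourself (``where the bulk of the work lies''), which is honest, but it means this is a plan, not a proof.

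For comparison, the paper proceeds differently. It does not classify graphs by component structure $(V_1,r)$. Instead it splits the $(g+1)$-tuple (resp. $g$-tuple) by the size $k$ of the sub-tuple carrying the pseudo-theta, giving a convolution formula
$a_1=\sum_{k=3}^{g+1}A_k\cdot B_{g,g+1-k}\cdot\binom{g}{k-1}\binom{g+1}{k}$ (and similarly for $a'_1$). The factor $A_k=\binom{k-1}{2}\tfrac{k!(k-1)!}{12}$ is obtained by a direct count of pseudo-theta shapes. The factor $B_{g,k}=(-1)^k k!\,\tfrac{g!}{(g-k)!}$ is the genuinely hard part: it is a sum over \emph{all} $k$-tuples (not just disjoint unions of unicycles), weighted by $(2-2g)^{b_1}\cdot(-1)^{\deg v_{g+1}}$ together with a Hall-type admissibility condition, and it is computed by a careful induction tracking, with multiplicities, all the ways a non-zero-weighted $(k-1)$-tuple extends to a $k$-tuple. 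The lemma then follows from the elementary identity $\sum_{k=0}^g(-1)^kf(k)\binom{g}{k-1}=0$ for $\deg f<g$. So the paper replaces your envisaged generating-function/pairing arguments with an explicit convolution plus a two-factor closed form; this is where your proposal would need substantial further work to become a proof.
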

\begin{proof}
We will weight and count the graphs associated to the terms in the expansions of the powers $c_1(\widetilde{\mathcal{L}}^{\langle g+1\rangle})$ and $c_1(\widetilde{\mathcal{L}}'^{\langle g\rangle})$. We first consider the case $c_1(\widetilde{\mathcal{L}}^{\langle g+1\rangle})$.
By the arguments of the proof of Lemma \ref{lemmaconstants} we have $c_1(\langle\mathcal{L}_0,\dots,\mathcal{L}_g\rangle)=a\cdot\int_{\pi_2} h^3$ for some $\mathcal{L}_0,\dots,\mathcal{L}_g\in\mathfrak{L}_g'$ and $a\neq 0$ only if the associated graph $\Gamma(\mathcal{L}_0,\dots,\mathcal{L}_g)$ has a subgraph $\Gamma_0$ containing two different vertices $v_j,v_k$ which are connected by three disjoint paths not involving the vertex $v_{g+1}$. 

Hence, we can compute $a_1$ by
\begin{align}\label{a1formula}
a_1=\sum_{k=3}^{g+1}A_{k}\cdot B_{g,g+1-k}\cdot\tbinom{g}{k-1}\cdot\tbinom{g+1}{k},
\end{align}
where $A_{k}$ is the number of $k$-tuples $(\mathcal{L}_0,\dots,\mathcal{L}_{k-1})\in\mathfrak{L}_{k-1}'^k$ such that the associated graph $\Gamma'(\mathcal{L}_0,\dots,\mathcal{L}_{k-1})$ is connected and has two vertices of degree $3$, which are connected by $3$ paths and all other vertices have degree $2$, that means it is of the form $\Gamma_0$ described above. To define $B_{g,k}$ we introduce another graph $\Gamma_{g}(\mathcal{L}_0,\dots,\mathcal{L}_{q})$ for any $(q+1)$-tuple $(\mathcal{L}_0,\dots,\mathcal{L}_{q})\in \mathfrak{L}_{g}^{q+1}$, which is defined as follows: The set of vertices is $\lbrace v_1,\dots,v_{g+1}\rbrace$ and there are $q+1$ edges associated to $\mathcal{L}_0,\dots,\mathcal{L}_q$ in the same way as for $\Gamma$.
Now $B_{g,k}$ is the sum of the weights $w(\mathcal{L}_0,\dots,\mathcal{L}_{k-1})$ associated to all $k$-tuples $(\mathcal{L}_0,\dots,\mathcal{L}_{k-1})\in\mathfrak{L}_{g}^k$, where the weight is defined as follows:
$w(\mathcal{L}_0,\dots,\mathcal{L}_{k-1})$ is $0$ if for some $l\le k$ and some subset $\lbrace j_1,\dots, j_l\rbrace\subseteq \lbrace 0,\dots,k-1\rbrace$ of cardinality $l$ at most $l-1$ of the vertices $v_{1},\dots,v_{k}$ of the graph
$\Gamma_g(\mathcal{L}_{j_1},\dots,\mathcal{L}_{j_l})$ has non-zero degree and otherwise it has the value $(2-2g)^{b_1}\cdot (-1)^{\deg(v_{g+1})}$, where $b_1$ denotes the first betti number of $\Gamma_g(\mathcal{L}_0,\dots,\mathcal{L}_{k-1})$. Note, that if we define another weight $w'$ in exactly the same way except that we replace the vertices $v_1,\dots,v_k$ by $v_{g-k+1},\dots,v_g$, we will obtain the same number $B_{g,k}$ by symmetry.
In particular, if the graph $\Gamma(\mathcal{L}_0,\dots,\mathcal{L}_{k-1})$ is of the form $\Gamma_0$ and if we have $w'(\mathcal{L}_k,\dots,\mathcal{L}_g)=0$, then it follows $c_1(\langle \mathcal{L}_0,\dots,\mathcal{L}_g\rangle)=0$ by (\ref{delignetrivial}). But for simpler notations we will work with the weight $w$.

We obtain the binomial coefficient $\binom{g}{k-1}$ in formula (\ref{a1formula}) by choosing $k-1$ of the $g$ vertices $\lbrace v_1,\dots, v_g\rbrace$ of the associated graph $\Gamma(\mathcal{L}_0,\dots,\mathcal{L}_g)$ to be the vertices of $\Gamma_0$ and the binomial coefficient $\binom{g+1}{k}$ by choosing the position of the $k$-tuple associated to the graph $\Gamma_0$ in the whole $(g+1)$-tuple $(\mathcal{L}_0,\dots,\mathcal{L}_g)$.

One can check the correctness of formula (\ref{a1formula}) by the methods of proof of Lemma \ref{lemmaconstants}: Every circle in the associated graph of a tuple $(\mathcal{L}_0,\dots,\mathcal{L}_g)$ outside of $\Gamma_0$ can be reduced to a loop, which is associated to a line bundle $pr_j^*T$ having degree $\deg(pr_j^*T)=2-2g$. Moreover, every line bundle of the form $pr_{j,g+1}^*\O(\Delta)$ occurs as its dual in $\widetilde{\mathcal{L}}$. Thus, we have to multiply with $\deg(pr_{j,g+1}^*\O(\Delta)^{\vee})=-1$ for every line bundle of this form in the tuple $(\mathcal{L}_0,\dots,\mathcal{L}_g)$. This justifies the formula of the weight $w$ and hence, formula (\ref{a1formula}) follows by elementary combinatorics and using (\ref{delignedegree}) inductively.

\begin{clm}
It holds $A_{k}=\binom{k-1}{2}\cdot\frac{k!(k-1)!}{12}$ for $3\le k\le g+1$.
\end{clm}
\begin{proof}[Proof of the claim]
Let $k=3$. Since $(pr_{1,2}^*\O(\Delta),pr_{1,2}^*\O(\Delta),pr_{1,2}^*\O(\Delta))$ is the only tuple with the desired property, we have $A_{3}=1$. Hence, we can assume $k\ge 4$. Write $(\mathcal{L}_0,\dots,\mathcal{L}_{k-1})$ for a tuple of the desired form and $v_{j_1},v_{j_2}$ for the vertices of the associated graph, which have degree $3$. There are $\binom{k-1}{2}$ possible choices for $v_{j_1}$ and $v_{j_2}$.
Further, there are $(k-3)!\binom{k-1}{2}$ choices to order the remaining vertices and to divide them in $3$ groups representing the $3$ paths from $v_{j_1}$ to $v_{j_2}$. But here, the association of groups of vertices to paths is ordered. Hence, we have to divide by the possibilities to order them.
Also, we have to multiply by the number of possibilities to order the line bundles in the tuple $(\mathcal{L}_0,\dots,\mathcal{L}_{k-1})$. We distinguish the following two cases:
\begin{itemize}
\item The lengths of two paths from $v_{j_1}$ to $v_{j_2}$ are $1$. 
Then two of the groups of vertices are empty and we have $3$ choices for the non-empty group. But on the other side, we have $\frac{k!}{2}$ possibilities to order the line bundles in the tuple $(\mathcal{L}_0,\dots,\mathcal{L}_{k-1})$, since two of them are equal. Hence, in this case we have to multiply by $\frac{k!}{6}$.
\item Otherwise, there are two paths from $v_{j_1}$ to $v_{j_2}$ with length at least $2$. Then one can distinguish all $3$ groups of vertices. Therefore, there are $3!=6$ possibilities to order them. Further, we have $k!$ possibilities to order the line bundles in the tuple $(\mathcal{L}_0,\dots,\mathcal{L}_{k-1})$, since all of them are different. Hence, in this case we also have to multiply by $\frac{k!}{6}$.
\end{itemize}
Thus, we conclude $A_{k}=\binom{k-1}{2}\cdot(k-3)!\cdot\binom{k-1}{2}\cdot\frac{k!}{6}=\binom{k-1}{2}\cdot\frac{k!(k-1)!}{12}$.
 \end{proof}

\begin{clm}\label{claim2}
The number $B_{g,k}$ is given by $B_{g,k}=(-1)^k \cdot k!\cdot \frac{g!}{(g-k)!}$.
\end{clm}
\begin{proof}[Proof of the claim]
We prove this by induction over $k$. If $k=0$, we only have the empty tuple, which is weighted by $1$. Hence, we assume $k>0$ and that the claim is true for $k-1$.
For any $q\le g$ denote by $\Z[\mathfrak{L}_g^q]$ the free abelian group over the set of $q$-tuples of elements in $\mathfrak{L}_g$. Write $w_q\in \Z[\mathfrak{L}_g^q]$ for the distinguished element
$$w_q=\sum_{(\mathcal{L}_0,\dots,\mathcal{L}_{q-1})\in \Z[\mathfrak{L}_g^q]} w(\mathcal{L}_0,\dots,\mathcal{L}_{q-1})\cdot (\mathcal{L}_0,\dots,\mathcal{L}_{q-1}).$$
For any element $c\in\Z[\mathfrak{L}_g^q]$ we define the degree $\deg(c)\in \Z$ to be the sum of its coefficients. Then we have $B_{g,q}=\deg(w_q)$, and the induction hypothesis states
$$\deg(w_{k-1})=(-1)^{k-1}\cdot (k-1)!\cdot \tfrac{g!}{(g-k+1)!}.$$
We have to prove $\deg(w_k)=-k(g-k+1)\cdot\deg(w_{k-1})$.
We distinguish the following cases to extend a non-zero weighted $(k-1)$-tuple to a non-zero weighted $k$-tuple:
\begin{enumerate}[$(1)_{k}$]
\item
$(\mathcal{L}_0,\dots,\mathcal{L}_{k-2})\to(\mathcal{L}_0,\dots,\mathcal{L}_{k-2}, pr_k^*T)$,
\item
$(\mathcal{L}_0,\dots,\mathcal{L}_{k-2})\to(\mathcal{L}_0,\dots,\mathcal{L}_{k-2}, pr_{k,l}^*\O(\Delta))$ for any $1\le l\le g$ with $l\neq k$ and $pr_{k,l}^*\O(\Delta)\notin \lbrace \mathcal{L}_0,\dots,\mathcal{L}_{k-2}\rbrace$,
\item
$(\mathcal{L}_0,\dots,\mathcal{L}_{k-2})\to(\mathcal{L}_0,\dots,\mathcal{L}_{k-2}, pr_{k,l}^*\O(\Delta))$ for any $1\le l\le g$ with $l\neq k$ and $pr_{k,l}^*\O(\Delta)\in \lbrace \mathcal{L}_0,\dots,\mathcal{L}_{k-2}\rbrace$,
\item
$(\mathcal{L}_0,\dots,\mathcal{L}_{i-1},pr_{l}^*T,\mathcal{L}_{i+1},\dots,\mathcal{L}_{k-2})$ \\$\to(\mathcal{L}_0,\dots,\mathcal{L}_{i-1},pr_{l,k}^*\O(\Delta),\mathcal{L}_{i+1},\dots,\mathcal{L}_{k-2},pr_{k,l}^*\O(\Delta))$ for any $l\neq k$,
\item
$(\mathcal{L}_0,\dots,\mathcal{L}_{i-1},pr_{l,m}^*\O(\Delta),\mathcal{L}_{i+1},\dots,\mathcal{L}_{k-2})$ \\$\to(\mathcal{L}_0,\dots,\mathcal{L}_{i-1},pr_{l,k}^*\O(\Delta),\mathcal{L}_{i+1},\dots,\mathcal{L}_{k-2},pr_{k,m}^*\O(\Delta))$ for any $l$ and $m$ different from $k$ with $l\neq m$,
\item
$(\mathcal{L}_0,\dots,\mathcal{L}_{k-2})\to(\mathcal{L}_0,\dots,\mathcal{L}_{k-2}, pr_{k,g+1}^*\O(\Delta))$.
\end{enumerate}

We additionally consider the extensions $(1)_j$-$(6)_j$ for any $1\le j\le k$ which coincide with $(1)_k$-$(6)_k$ with the change that the new line bundle occurs in the $j$-th factor instead of the last factor. In this way, we obtain all $k$-tuples of non-zero weight as extensions of $(k-1)$-tuples of non-zero weight. However, the same $k$-tuple can be constructed by different extensions. 
Hence, we have to count them with suitable multiplicities.

For a $(k-1)$-tuple $(\mathcal{L}_0,\dots,\mathcal{L}_{k-2})\in\mathfrak{L}_g^{k-1}$ we denote by $m=\deg(v_{k})$ the degree of the vertex $v_{k}$ of the associated graph $\Gamma_g(\mathcal{L}_0,\dots,\mathcal{L}_{k-2})$.
Let $w'_k$ be the element in $\Z[\mathcal{L}_g^k]$, which we obtain by taking for all $(k-1)$-tuples $(\mathcal{L}_0,\dots,\mathcal{L}_{k-2})\in\mathfrak{L}_g^{k-1}$ and all $j\le k$
\begin{itemize}[-]
\item the extensions $(1)_j$ times $(2-2g)\cdot w(\mathcal{L}_0,\dots,\mathcal{L}_{k-2})$,
\item the extensions $(2)_j$ times $(1-m) \cdot w(\mathcal{L}_0,\dots,\mathcal{L}_{k-2})$, 
\item the extensions $(3)_j$ times $(g-m) \cdot w(\mathcal{L}_0,\dots,\mathcal{L}_{k-2})$, 
\item the extensions $(4)_j$ and $(5)_j$ times $w (\mathcal{L}_0,\dots,\mathcal{L}_{k-2})$ and 
\item the extensions $(6)_j$ times $(m-1) \cdot w(\mathcal{L}_0,\dots,\mathcal{L}_{k-2})$.
\end{itemize}

Next, we prove $w'_k=w_k$. Let $(\mathcal{L}_0,\dots,\mathcal{L}_{k-1})\in\mathfrak{L}_g^{k}$ be a $k$-tuple with non-zero weight. Denote by $m'=\deg(v_k)$ the degree of the vertex $v_k$ in the associated graph $\Gamma=\Gamma(\mathcal{L}_0,\dots,\mathcal{L}_{k-1})$. If $\Gamma$ has a loop at $v_{k}$, the tuple $(\mathcal{L}_0,\dots,\mathcal{L}_{k-1})$ can only be obtained by an extension of kind $(1)_j$ from a non-zero weighted $(k-1)$-tuple. Hence, we can assume, that $\Gamma$ has no loop at $v_k$. Since every extension $(1)_j$-$(6)_j$ only adds edges connected to $v_k$, it is enough to consider the connected component $\Gamma_1$ of the graph $\Gamma$, which contains the vertex $v_k$. Its first Betti number $b_1(\Gamma_1)$ is either $1$ and all its vertices form a subset of $\lbrace v_1,\dots,v_k\rbrace$ or $b_1(\Gamma_1)=0$ and $\Gamma_1$ additionally contains one vertex $v_i$ with $k<i\le g+1$. More precisely, we distinguish the following four cases, where we denote by $Z_{1}$ a connected subgraph of $\Gamma_1$ with first Betti number $b_1(Z_{1})=1$ and by $\Gamma_{1,1}, \dots, \Gamma_{1,m'}$ connected subgraphs of $\Gamma_1$, which are trees. The sets of vertices of $Z_1, \Gamma_{1,1}, \dots, \Gamma_{1,m'-1}$ are assumed to be non-empty subsets of $\lbrace v_1, \dots, v_{k-1}\rbrace$ and the set of vertices of $\Gamma_{1,m'}$ is assumed to be a subset of $\lbrace v_1, \dots, v_{k-1},v_i\rbrace$, which has to contain $v_i$.

\begin{itemize}
\item In the first case, we consider $\Gamma_1$ with $b_1(\Gamma_1)=1$ and $\Gamma_1$ has the structure
$$\xygraph{!{(-0.5,0)}*+!(0.15,0){v_k \bullet}="a" 
!{(-1,1)}*+{\text{\fbox{$Z_{1}$}}}="b"
!{(0,1)}*+{ \text{\fbox{$\Gamma_{1,1}$}}}="c"
!{(1,1)}*+{\dots}
!{(2,1)}*+{ \text{\fbox{$\Gamma_{1,m'-1}$}}}="d"
"a"-"b"^e
"a"-"c"
"a"-"d"
}.$$
These graphs can be obtained by an extension of the form $(2)_j$, where the edge $e$ is added, or by an extension of the form $(5)_j$, where an edge from $Z_1$ to $\Gamma_{1,l}$ for some $l\le m'-1$ is replaced by the edges from $Z_1$ to $v_k$ and from $\Gamma_{1,l}$ to $v_k$. In both cases the weight of the tuple is preserved by the extension and we have $m=m'-1$ for the extension of the form $(2)_j$. Here, the $j$ is unique by the choice of the $k$-tuple and the kind of extension. All in all, the coefficient of the $k$-tuple $(\mathcal{L}_0,\dots,\mathcal{L}_{k-1})$ in $w'_k$ equals
$$((1-(m'-1))+(m'-1)) \cdot w(\mathcal{L}_0,\dots,\mathcal{L}_{k-1}) = w(\mathcal{L}_0,\dots,\mathcal{L}_{k-1}).$$
\item Next, we consider graphs $\Gamma_1$ with $b_1(\Gamma_1)=1$ and having the structure
$$\xygraph{!{(-0.5,0)}*+!(0.15,0){v_k \bullet}="a" 
!{(-1.2,0.8)}*+{\bullet}="f"
!{(-0.8,0.8)}*+{\bullet}="e"
!{(-1,1)}*+{\text{\fbox{$\Gamma_{1,1}$}}}
!{(0.5,1)}*+{ \text{\fbox{$\Gamma_{1,2}$}}}="c"
!{(1.5,1)}*+{\dots}
!{(2.5,1)}*+{ \text{\fbox{$\Gamma_{1,m'-1}$}}}="d"
"a"-"e"_{e'}
"a"-"f"^{e}
"a"-"c"
"a"-"d"
}.$$
Here, there are two edges from $v_k$ to the subgraph $\Gamma_{1,1}$ landing in two different vertices.
These graphs can be obtained by extensions of the form $(2)_j$, where the edge $e$ or the edge $e'$ is added, or by an extension of the form $(5)_j$, where an edge from one of the neighbours of $v_k$ in the graph $\Gamma_1$ to another is replaced by two edges connecting each of these two neighbours with $v_k$, where at least one of the neighbours has to be contained in $\Gamma_{1,1}$.
Hence, there are $2$ possible extensions of the form $(2)_j$, where $m=m'-1$, and $(m'-1)+(m'-2)$ possible extensions of the form $(5)_j$. The weight of the tuple is preserved by these extensions. The $j$ is unique by the choice of the $k$-tuple and the kind of extension. Hence, the coefficient of the $k$-tuple $(\mathcal{L}_0,\dots,\mathcal{L}_{k-1})$ in $w'_k$ is given by
$$(2(1-(m'-1))+(m'-1+m'-2))w(\mathcal{L}_0,\dots,\mathcal{L}_{k-1})= w(\mathcal{L}_0,\dots,\mathcal{L}_{k-1}).$$
\item
We have a third case with $b_1(\Gamma_1)=1$, where $\Gamma_1$ is of the form
$$\xygraph{!{(-0.5,0)}*+ !(0.15,0) {v_k \bullet}="a" 
!{(-1,0.8)}*+{\bullet}="e"
!{(-1,1)}*+{\text{\fbox{$\Gamma_{1,1}$}}}
!{(0.5,1)}*+{ \text{\fbox{$\Gamma_{1,2}$}}}="c"
!{(1.5,1)}*+{\dots}
!{(2.5,1)}*+{ \text{\fbox{$\Gamma_{1,m'-1}$}}}="d"
"a"- @/^0.2cm/ "e"^e
"a"-@/_0.2cm/"e"_{e'}
"a"-"c"
"a"-"d"
}.$$
Here, there are two edges from $v_k$ to the subgraph $\Gamma_{1,1}$ landing in the same vertex. These graphs can be obtained by an extension of the form $(3)_j$, where the edge $e$ or the edge $e'$ is added, by an extension of the form $(4)_j$, where a loop is replaced by the edges $e$ and $e'$, or by an extension of the form $(5)_j$, where an edge from one of the neighbours of $v_k$ in the graph $\Gamma_1$ to another is replaced by two edges connecting each of the two neighbours with $v_k$, where one of the neighbours has to be the one in $\Gamma_{1,1}$. The extensions of the form $(3)_j$ and $(5)_j$ multiply the weight by $(2-2g)$, the extensions of the form $(4)_j$ preserve the weight and for the extensions of the form $(3)_j$ we have $m=m'-1$. Since each of these extensions adds at least one of the edges $e$ and $e'$, which represent two isomorphic line bundles in the tuple $(\mathcal{L}_0,\dots,\mathcal{L}_{k-1})$, there are two choices for the $j$. Therefore, the coefficient of the $k$-tuple $(\mathcal{L}_0,\dots,\mathcal{L}_{k-1})$ in $w'_k$ is
$$2\left(\tfrac{g-(m'-1)}{2-2g}+1+\tfrac{m'-2}{2-2g}\right)\cdot w(\mathcal{L}_0,\dots,\mathcal{L}_{k-1}) =w(\mathcal{L}_0,\dots,\mathcal{L}_{k-1}).$$
\item
Finally, it remains the case $b_1(\Gamma_1)=0$ and $\Gamma_1$ is of the form
$$\xygraph{!{(-0.5,0)}*+ !(0.15,0) {v_k \bullet}="a" 
!{(-1,1)}*+{\text{\fbox{$\Gamma_{1,1}$}}}="e"
!{(1,1)}*+{ \text{\fbox{$\Gamma_{1,m'-1}$}}}="c"
!{(0,1)}*+{\dots}
!{(2.5,1)}*+{ \text{\fbox{$\Gamma_{1,m'}$}}}="d"
"a"-"e"
"a"-"c"
"a"-"d"_e
}.$$
There is an $1\le r\le g+1$ with $r\neq k$ such that $e=(v_k,v_r)$. 
We additionally distinguish the following two cases.
\begin{enumerate}[(a)]
\item
Assume $r\le g$. Then we obtain graphs of this form by an extension of the form $(2)_j$, where the edge $e$ is added, or by an extension of the form $(5)_j$, where an edge from $v_r$ to $\Gamma_{1,l}$ for some $l< m'$ is replaced by the edge from $\Gamma_{1,l}$ to $v_k$ and the edge $(v_k,v_r)$. These extensions preserve the weights of the corresponding tuples. Further, we have $m=m'-1$ for the extension $(2)_j$. The $j$ is unique by the choice of the $k$-tuple and the kind of extension. Hence, the coefficient of the $k$-tuple $(\mathcal{L}_0,\dots,\mathcal{L}_{k-1})$ in $w'_k$ equals
$$((1-(m'-1))+(m'-1)) \cdot w(\mathcal{L}_0,\dots,\mathcal{L}_{k-1}) = w(\mathcal{L}_0,\dots,\mathcal{L}_{k-1}).$$
\item
Otherwise, we have $r=i=g+1$. Then graphs of this form can be obtained by extensions of the form $(5)_j$, where an edge from $v_r$ to $\Gamma_{1,l}$ for some $l< m'$ is replaced by the edge from $\Gamma_{1,l}$ to $v_k$ and the edge $(v_k,v_r)$, or by an extension of the form $(6)_j$, where the edge $e$ is added. The extensions of the form $(5)_j$ preserve the weight, while the extension of the form $(6)_j$ changes the weight by the factor $-1$.
Further, we have $m=m'-1$ for the extension of the form $(6)_j$. The $j$ is unique by the choice of the $k$-tuple and the kind of extension. Thus, the coefficient of the $k$-tuple $(\mathcal{L}_0,\dots,\mathcal{L}_{k-1})$ in $w'_k$ is given by
$$((m'-1)+(-1)\cdot((m'-1)-1)) \cdot w(\mathcal{L}_0,\dots,\mathcal{L}_{k-1}) = w(\mathcal{L}_0,\dots,\mathcal{L}_{k-1}).$$
\end{enumerate}
\end{itemize}
Thus, we obtain $w'_k=w_k$.
We conclude that $\deg(w_k)=\deg(w_{k-1})\cdot c(g,k)$, where $c(g,k)$ equals
\begin{align*}
&k\cdot\left((2-2g)+(1-m)(g-1-m)+(g-m)m+(k-1-m)+(m-1)\right)\\
=&-k(g-k+1).
\end{align*}
This proves the claim.
 \end{proof}
To prove the equations in the lemma, we use the following identity. Let $f\in \Q[X]$ be a polynomial of degree $\deg f<g$. Then it holds
\begin{align}\label{binom}
\sum_{k=0}^g (-1)^k f(k)\tbinom{g}{k-1}=0.
\end{align}
One can check this as follows: Let $0\le j< g$ be an integer. If one differentiates $j$ times the identity $(x-1)^g=\sum_{k=0}^g (-1)^{g-k} x^k \tbinom{g}{k}$
and sets $x=1$, one obtains
$$0=\sum_{k=0}^g (-1)^{k} k(k-1)\ldots (k-j+1)\tbinom{g}{k}.$$
Since this holds for any $j<g$, we also obtain (\ref{binom}) by taking linear combinations.

Now we can prove the first equation of the lemma by putting the values for $A_{k}$ and $B_{g,k}$ into equation (\ref{a1formula})
\begin{align*}
a_1&=\sum_{k=3}^{g+1}\tbinom{k-1}{2}\tfrac{k!(k-1)!}{12}(-1)^{g+1-k} (g+1-k)! \tfrac{g!}{(k-1)!} \tbinom{g}{k-1}\tbinom{g+1}{k}\\
&=\tfrac{g!(g+1)!}{12}(-1)^g\sum_{k=2}^{g}(-1)^{k} \tbinom{k}{2}\tbinom{g}{k}=0,
\end{align*}
where we applied (\ref{binom}) in the last equality

For $a'_1$ we obtain by the same arguments
$$a'_1=\sum_{k=3}^g A_{k}\cdot B'_{g,g-k}\cdot \tbinom{g-1}{k-1}\cdot\tbinom{g}{k}.$$
Here, $B'_{g,k}$ is the sum of the weights $w(\mathcal{L}_0,\dots,\mathcal{L}_{k-1})$ for all $(\mathcal{L}_0,\dots,\mathcal{L}_{k-1})\in{\mathfrak{L}'}_{g-1}^k$. We obtain
$B'_{g,k}=(-1)^k \cdot k!\cdot \frac{g!}{(g-k)!}$ in the same way as for $B_{g,k}$. One only has to note, that there is no extension of the form $(6)_j$ and we have $l,m\le g-1$ for all extensions $(2)_j-(5)_j$.
To calculate $a'_1$, we deduce from (\ref{binom}), that
$\sum_{k=3}^g(-1)^{k-1}\tbinom{k-1}{2}\tbinom{g}{k}=1$.
Now we get for $a'_1$
\begin{align*}
a'_1&=\sum_{k=3}^g \tbinom{k-1}{2}\tfrac{k!(k-1)!}{12}(-1)^{g-k}(g-k)!\tfrac{g!}{k!}\tbinom{g-1}{k-1}\tbinom{g}{k}\\
&=\tfrac{g!(g-1)!}{12}(-1)^{g-1}\sum_{k=3}^{g}(-1)^{k-1}\tbinom{k-1}{2}\tbinom{g}{k}=\tfrac{g!(g-1)!}{12}(-1)^{g-1}.
\end{align*}
This completes the proof of the lemma.
 \end{proof}
In a very similar way, we obtain the constants $a_2$ and $a'_2$.
\begin{Lem}\label{lema2}
We have $a_2=\tfrac{g!(g+1)!}{8}(-1)^{g+1}$ and $a'_2=\tfrac{(g-1)!(g+1)!}{8}(-1)^{g}$ for the constants $a_2$ and $a'_2$ in Lemma \ref{lemmaconstants}. In particular, it holds $b_2=-\tfrac{1}{8}$.
\end{Lem}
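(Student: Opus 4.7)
The plan is to follow the template of Lemma \ref{lema1}. By the case analysis in the proof of Lemma \ref{lemmaconstants}, a tuple $(\mathcal{L}_0,\ldots,\mathcal{L}_g)\in \mathfrak{L}_g^{g+1}$ contributes to $e_1^A$ precisely when its associated graph consists of several connected components of first Betti number one together with one distinguished component of first Betti number two that reduces to the figure-eight graph (a) or the dumbbell graph (b), but not to the theta graph (c); an analogous statement holds for tuples in $\mathfrak{L}_g'^g$ contributing to $\widetilde{\mathcal{L}}'^{\langle g\rangle}$. In analogy with (\ref{a1formula}) one therefore writes
\begin{align*}
a_2 &= \sum_{k=2}^{g+1} D_k \cdot B_{g,g+1-k}\cdot \binom{g}{k-1}\binom{g+1}{k},\\
a'_2 &= \sum_{k=2}^{g} D_k \cdot B'_{g,g-k}\cdot \binom{g-1}{k-1}\binom{g}{k},
\end{align*}
where $D_k$ denotes the number of $k$-tuples in $\mathfrak{L}'^{k}_{k-1}$ whose associated graph is connected, has first Betti number two, and reduces to (a) or (b). The binomial factors and the weights $B_{g,j}$, $B'_{g,j}$ arise exactly as in the proof of Lemma \ref{lema1}.

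The key combinatorial step is to compute $D_k$ in closed form. Connected graphs on $k-1$ labelled vertices with $k$ labelled edges, no vertex of degree zero, and of first Betti number two split into two topological types: those reducing to a figure-eight (two cycles sharing a vertex, possibly after contracting chains of degree-two vertices) and those reducing to a dumbbell (two disjoint cycles joined by a nontrivial bridge path). Stratifying by the two cycle lengths and the bridge length, and accounting for the symmetry factors from repeated line-bundle labels (parallel edges producing identical labels, loops next to parallel edges after contraction, choice of loop vertex), one expresses $D_k$ as a polynomial in $k$ of bounded degree times $k!(k-1)!$, in direct analogy with the count $A_k=\binom{k-1}{2}\cdot \tfrac{k!(k-1)!}{12}$ established in Lemma \ref{lema1}.

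Substituting $D_k$ together with the explicit values $B_{g,k}=(-1)^k k!\tfrac{g!}{(g-k)!}$ and $B'_{g,k}=(-1)^k k!\tfrac{g!}{(g-k)!}$ from the proof of Lemma \ref{lema1}, and applying the polynomial identity (\ref{binom}), all low-degree contributions vanish and each alternating sum collapses to a single surviving term. This yields the claimed values $a_2=\tfrac{g!(g+1)!}{8}(-1)^{g+1}$ and $a'_2=\tfrac{(g-1)!(g+1)!}{8}(-1)^{g}$. Plugging into $b_2=\tfrac{(-1)^{g+1}}{(g!)^2}(a_2+g\cdot a'_2)-\tfrac{1}{8}$, the two main terms combine into a factor $(-1)^{g+1}+(-1)^{g}=0$, so $a_2+g\cdot a'_2=0$ and consequently $b_2=-\tfrac{1}{8}$.

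The main obstacle is the explicit determination of $D_k$: unlike the theta-graph case treated in Lemma \ref{lema1}, one must simultaneously handle both figure-eight and dumbbell topologies, and the symmetry factors coming from the interplay of parallel edges and loops under the reduction rules of the proof of Lemma \ref{lemmaconstants} are more intricate. Once $D_k$ is correctly computed, the identity (\ref{binom}) together with the computation of $B_{g,j}$ already established in Lemma \ref{lema1} produce the closed forms for $a_2$, $a'_2$, and hence $b_2$, essentially automatically.
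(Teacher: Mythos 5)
Your framework is exactly the paper's: you correctly observe that the graphs contributing to $e_1^A$ are those reducing to the figure-eight (a) or the dumbbell (b), you set up the analogues of (\ref{a1formula}) for $a_2$ and $a'_2$ (with $D_k$ playing the role of the paper's $A'_k+A''_k$), and you correctly use the values of $B_{g,j}$ and $B'_{g,j}$ from the proof of Lemma \ref{lema1}. The final deduction $b_2=-\tfrac{1}{8}$ from $a_2+g\cdot a'_2=0$ is also right, given the claimed values of $a_2$ and $a'_2$.

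However, there is a genuine gap: you never actually compute $D_k$. You state that ``one expresses $D_k$ as a polynomial in $k$ of bounded degree times $k!(k-1)!$'' and then assert that the result falls out ``essentially automatically''---indeed you name this as ``the main obstacle''---but this computation \emph{is} the content of the lemma. In the paper, Claim \ref{claim3} shows $A'_k=\tfrac{k!(k-1)!}{8}\cdot\tfrac{k^2+k-4}{2}$ and $A''_k=\tfrac{k!(k-1)!}{8}(k+1)$ for $k\ge 3$, and $A'_2=0$, $A''_2=1$, by a careful case analysis of symmetry factors (parallel edges giving repeated labels, choices of the two degree-$3$ or the degree-$4$ vertex, and the $2$-fold ambiguity in reversing cycle orientations). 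Without the precise polynomial $\tfrac{k^2+3k-2}{2}$, you cannot obtain the stated constants. Moreover, the mechanism you describe is off: (\ref{binom}) makes the \emph{full} alternating sum $\sum_{k=0}^g(-1)^k f(k)\binom{g}{k}$ vanish, so what is actually used is that the partial sum over $k\ge 3$ equals minus the terms at $k=0,1,2$; for $a_2$ one gets $4g-1$ from the large-$k$ range, which must be combined with $-4g$ coming from the $k=2$ boundary (where $A'_2+A''_2=1$ does \emph{not} fit the $k\ge 3$ formula), yielding $-1$. Nothing ``collapses to a single surviving term,'' and the $k=2$ boundary term must be tracked separately. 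As written, the proposal is a correct plan but not a proof.
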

\begin{proof}
We proceed in the same way as in the proof of Lemma \ref{lema1}. In particular, we compute $a_2$ and $a'_2$ by
\begin{align}\label{equa2}
&a_2=\sum_{k=2}^{g+1}(A'_k+A''_k)\cdot B_{g,g+1-k} \cdot \tbinom{g}{k-1}\tbinom{g+1}{k} \text{ and }\nonumber \\ &a'_2=\sum_{k=2}^{g}(A'_k+A''_k)\cdot B_{g,g-k} \cdot \tbinom{g-1}{k-1}\tbinom{g}{k},
\end{align}
where $A'_k$ (respectively $A''_k$) is the number of $k$-tuples $(\mathscr{L}_0,\dots,\mathscr{L}_{k-1})\in\mathfrak{L}'^k_{k-1}$, such that the associated graph $\Gamma'(\mathscr{L}_0,\dots,\mathscr{L}_{k-1})$ is connected and has two vertices of degree $3$, which are connected by $1$ path and all other vertices have degree $2$ (respectively has one vertex of degree $4$ and all other vertices have degree $2$).
\begin{clm}\label{claim3}
It holds $A'_k=\tfrac{k!(k-1)!}{8}\tfrac{k^2+k-4}{2}$ and $A''_k=\tfrac{k!(k-1)!}{8}(k+1)$ for $k\ge 3$ and $A'_2=0$ and $A''_2=1$.
\end{clm}
\begin{proof}[Proof of the claim]
For $k=2$ one can directly compute $A'_k$ and $A''_k$. For $k\ge 3$ we only compute $A'_k$. The computation for $A''_k$ can be done similarly. Let $(\mathscr{L}_0,\dots,\mathscr{L}_{k-1})\in\mathfrak{L}'^k_{k-1}$ be a $k$-tuple of the form counted by $A'_k$ and $\Gamma$ its associated graph $\Gamma'(\mathscr{L}_0,\dots,\mathscr{L}_{k-1})$. In general there are $k!$ possibilities to sort the line bundles in the tuple and $\tfrac{(k-1)!}{2}$ possibilities to sort the list of the vertices in $\Gamma$, since if we reverse the list, we obtain the same line bundles. We have to choose the two vertices of degree $3$. We distinguish the following three cases:
\begin{itemize}
\item The first and the last vertex in the list of vertices are of degree $3$. That means the two circles in $\Gamma$ are paths of length $1$, hence they are loops.
\item Either the first or the last vertex is of degree $3$. Then exactly one circle in $\Gamma$ is of length at least $2$. If the length is $2$, there are two line bundles in the $k$-tuple, which are equal. Hence, there are $\tfrac{k!}{2}$ instead of $k!$ possibilities to sort the line bundles in the $k$-tuple. Now let the length of the circle be at least $3$. If we reverse the vertices in this circle without the vertex of degree $3$, we obtain the same line bundles. Hence, we only have to count $\tfrac{(k-1)!}{2}$ instead of $(k-1)!$ possibilities to sort the list of vertices in $\Gamma$.
\item Otherwise, both circles in $\Gamma$ have length at least $2$. By the same arguments of the case above, we have to divide the number of possibilities by $4$.
\end{itemize}
We conclude
$$A'_k=\tfrac{k!(k-1)!}{2}\left(1+\tfrac{2(k-3)}{2}+\tfrac{\tbinom{k-3}{2}}{4}\right)=\tfrac{k!(k-1)!}{8}\tfrac{k^2+k-4}{2}.$$
This proves the claim.
 \end{proof}
Now we can apply Claim \ref{claim2} and \ref{claim3} to (\ref{equa2}). That yields
$$a_2=\tfrac{(-1)^g g! (g+1)!}{8}\left(\sum_{k=3}^{g+1}\tfrac{k^2+3k-2}{2}(-1)^{k-1}\tbinom{g}{k-1}-4g\right)\text{ and}$$
$$a'_2=\tfrac{(-1)^g g! (g-1)!}{8}\left(\sum_{k=3}^g\tfrac{k^2+3k-2}{2}(-1)^k\tbinom{g}{k}+4\tbinom{g}{2}\right).$$
Applying (\ref{binom}) to the sums, we obtain the formulas in the lemma.
 \end{proof}
Now we apply Lemmas \ref{lema1} and \ref{lema2} to equation (\ref{laplaceH2}). This yields
\begin{align}\label{laplaceH3}
\tfrac{1}{\pi i} \partial \overline{\partial} H(X)=\tfrac{1}{2}\omega_{\mathrm{Hdg}}+\tfrac{1}{12}\int_{\pi_2} h^3-\tfrac{1}{8} e_1^A
\end{align}
as forms on $\mathcal{M}'_g$. By continuity, this relation also holds for $\mathcal{M}_g[2]$. Since all these forms are already defined on $\mathcal{M}_g$, this formula also holds for the corresponding forms on $\mathcal{M}_g$.

\subsection{Main result}
\label{secgeneral}
In this subsection we deduce our main result, which generalizes the second formula in Theorem \ref{hypdelta} to compact and connected Riemann surfaces. Precisely, we prove the following theorem.
\begin{Thm}\label{generalthm}
Any compact and connected Riemann surface $X$ of genus $g\ge 1$ satisfies $\delta(X)=-24H(X)+2\varphi(X)-8g\log 2\pi$.
\end{Thm}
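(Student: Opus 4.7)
The plan is to follow the strategy announced in Section \ref{intro}, namely to show that the function
$$f(X)=\delta(X)+24H(X)-2\varphi(X)$$
is a pluriharmonic function on $\mathcal{M}_g$, hence locally constant, and then evaluate it on the hyperelliptic locus using the second formula of Theorem \ref{hypdelta}. The case $g=1$ is isolated first and handled by Faltings' explicit computation in \cite[Section 7]{Fal84} (noting $\varphi(X)=0$ for $g=1$). For $g=2$ every compact connected Riemann surface is hyperelliptic, so the theorem reduces to Theorem \ref{hypdelta}. Thus the main work is the case $g\ge 3$.

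For $g\ge 3$, the key observation is that all three inputs into $f$ are now computed at the level of $(1,1)$-forms on $\mathcal{M}_g$. Combining the formula for $\tfrac{1}{\pi i}\partial\overline\partial\varphi$ in (\ref{laplacekz}), Faltings' formula for $\tfrac{1}{\pi i}\partial\overline\partial\delta$ in (\ref{laplacedelta}), and our newly established identity (\ref{laplaceH3}) for $\tfrac{1}{\pi i}\partial\overline\partial H$, a direct linear computation gives
$$
\tfrac{1}{\pi i}\partial\overline\partial f
=(e_1^A-12\omega_{\mathrm{Hdg}})+24\bigl(\tfrac12\omega_{\mathrm{Hdg}}+\tfrac1{12}\textstyle\int_{\pi_2}h^3-\tfrac18 e_1^A\bigr)-2\bigl(\int_{\pi_2}h^3-e_1^A\bigr)=0,
$$
so $f$ is pluriharmonic on $\mathcal{M}_g$.

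Next, I would invoke the fact recalled in the introduction that $\mathcal{M}_g$ admits no non-constant pluriharmonic functions for $g\ge 3$; this forces $f$ to be constant. To evaluate the constant, use that for every $g\ge 2$ the hyperelliptic locus in $\mathcal{M}_g$ is non-empty. Picking any hyperelliptic $X$ of genus $g$ and applying the second equality of Theorem \ref{hypdelta} gives $f(X)=-8g\log 2\pi$, which therefore is the value of $f$ on all of $\mathcal{M}_g$.

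The delicate point is really the verification of (\ref{laplaceH3}), which has already been carried out in Sections \ref{secforms}--\ref{secgraphs} through the analysis via Deligne pairings and the combinatorial graph count in Lemmas \ref{lema1} and \ref{lema2}; once this is granted the rest is bookkeeping. A secondary subtlety is that (\ref{laplaceH3}) is initially established only on $\mathcal{M}'_g=\mathcal{M}_g[2]\setminus\mathcal{H}_g[2]$, but continuity together with invariance of all three forms under the action of the level structure group extends it first to $\mathcal{M}_g[2]$ and then descends it to $\mathcal{M}_g$, which is exactly what is needed to conclude that $f$ is globally pluriharmonic.
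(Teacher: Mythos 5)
Your proposal follows exactly the same strategy as the paper: isolate $g=1$ and $g=2$, define $f(X)=\delta(X)+24H(X)-2\varphi(X)$, verify $\tfrac{1}{\pi i}\partial\overline\partial f=0$ on $\mathcal{M}_g$ from \eqref{laplacekz}, \eqref{laplacedelta}, and \eqref{laplaceH3}, conclude $f$ is constant, and evaluate on the hyperelliptic locus via the second formula of Theorem~\ref{hypdelta}. Your linear computation of $\tfrac{1}{\pi i}\partial\overline\partial f$ is correct, and your remark on extending \eqref{laplaceH3} from $\mathcal{M}'_g$ to $\mathcal{M}_g$ matches the paper's continuity argument.

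The one genuine gap is the step where you ``invoke the fact recalled in the introduction that $\mathcal{M}_g$ admits no non-constant pluriharmonic functions for $g\ge 3$.'' The introduction merely asserts this as a roadmap; it is not a citable black box, and the paper's proof actually has to establish it. The subtlety is that pluriharmonicity only makes $f$ \emph{locally} the real part of a holomorphic function, and on a space with nontrivial topology such local primitives need not patch. The paper closes this by pulling back to Teichm\"uller space $\mathcal{T}_g$ (contractible, so the lift of $f$ is globally $\mathrm{Re}(h)$ for a holomorphic $h$ on $\mathcal{T}_g$), then using Powell's theorem that the mapping class group $\Gamma_g$ is perfect, hence $H^1(\Gamma_g,\Z)=0$, to show $h$ descends to a holomorphic function on $\mathcal{M}_g$; finally, every holomorphic function on $\mathcal{M}_g$ is constant. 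Without this chain your argument is incomplete --- in particular, the perfectness of $\Gamma_g$ is exactly what rules out a nontrivial period of the imaginary part of the local primitive, and omitting it leaves open the possibility that $f$ is pluriharmonic but non-constant.
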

\begin{proof}
For $g=1$ and $g=2$ this follows from \cite[Section 7]{Fal84}, respectively Theorem \ref{hypdelta}. Thus, we assume $g\ge 3$.
Consider the function 
$$f(X)=\delta(X)+24H(X)-2\varphi(X)$$
as a real-valued function on $\mathcal{M}_g$.
By (\ref{laplacekz}), (\ref{laplacedelta}) and (\ref{laplaceH3}) this function satisfies $\partial\overline{\partial} f(X)=0$, that means $f$ is pluriharmonic on $\mathcal{M}_g$.  But any pluriharmonic function is locally the real part of a holomorphic function, which is unique up to an additive constant by this property. Let $f$ be the real part of a holomorphic function $h_j$ on $U_j$, where $\mathcal{M}_g=\bigcup_{j}U_j$ is an open covering.

We can consider $\mathcal{M}_g$ as the orbifold $\mathcal{M}_g\cong\mathcal{T}_g/\Gamma_g$, where $\mathcal{T}_g$ is the Teichmüller space and $\Gamma_g$ is the mapping class group. Since $\mathcal{T}_g$ is contractible, the pullback of $f$ to $\mathcal{T}_g$ has to be globally the real part of a holomorphic function on $\mathcal{T}_g$. Hence, we can choose the $h_j$'s, such that we can glue their pullbacks in $\mathcal{T}_g$ to a globally holomorphic function. On the other hand, the mapping class group $\Gamma_g$ is perfect, see \cite[Theorem 1]{Pow78}. This means $H^1(\Gamma_g,\Z)=0$. Thus, we can glue the $h_j$'s also in $\mathcal{M}_g$. Therefore, $f$ has to be globally the real part of a holomorphic function on $\mathcal{M}_g$. But every holomorphic function on $\mathcal{M}_g$ is constant, see for example \cite[Proposition 7.4]{Sch07}.
 Thus, $f(X)$ is constant on $\mathcal{M}_g$, and we obtain $f(X)=-8g \log 2\pi$ by Theorem \ref{hypdelta}.
 \end{proof}

As an application of the theorem we obtain a lower bound for the invariant $\delta(X)$ by applying the lower bounds in Proposition \ref{Hbound} and (\ref{kzbound}).
\begin{Cor}\label{deltabound}
For any compact and connected Riemann surface $X$ of genus $g\ge 1$ we have $\delta(X)> -2g\log 2\pi^{4}$.
\end{Cor}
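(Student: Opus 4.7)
The statement is an immediate consequence of the preceding main result combined with the two lower bounds on the summands in Theorem \ref{generalthm}, so the plan is essentially bookkeeping rather than new work.

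The plan is to substitute into the identity
\[
\delta(X)=-24H(X)+2\varphi(X)-8g\log 2\pi
\]
of Theorem \ref{generalthm} the two ingredients that have already been established. First I would invoke Proposition \ref{Hbound} (applied to $(A,\Theta)=\Jac(X)$, noting that $H(X)=H(\Jac(X))$ by the definition in Section \ref{invariant}), which yields the strict inequality $H(X)<-\tfrac{g}{4}\log 2$; this is itself just Jensen's inequality applied to the normalisation ($\theta$4). Multiplying by $-24$ gives $-24H(X)>6g\log 2$.

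Next I would use the non-negativity of the Zhang--Kawazumi invariant: $\varphi(X)=0$ for $g=1$ by definition, and $\varphi(X)>0$ for $g\ge 2$ by (\ref{kzbound}). In particular $2\varphi(X)\ge 0$ in every genus.

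Combining these two bounds with Theorem \ref{generalthm} yields
\[
\delta(X) > 6g\log 2 + 0 - 8g\log 2\pi = -2g(\log 2 + 4\log\pi) = -2g\log 2\pi^{4},
\]
which is the claimed inequality. There is no genuine obstacle here: the work was already done in Theorem \ref{generalthm} and in Proposition \ref{Hbound}; the only thing to verify is the elementary arithmetic identity $8\log 2\pi-6\log 2=2\log 2\pi^{4}$, and that the strictness coming from Jensen's inequality (which applies because $\|\theta\|^{2}$ is non-constant) survives the chain of estimates, which it plainly does.
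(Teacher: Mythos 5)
Your proposal is correct and is exactly the paper's argument: the corollary is stated as an application of Theorem \ref{generalthm} together with Proposition \ref{Hbound} and the bound (\ref{kzbound}), and your arithmetic $-24H(X)>6g\log 2$, $2\varphi(X)\ge 0$, $6g\log 2-8g\log 2\pi=-2g\log 2\pi^4$ is the intended computation. Nothing to add.
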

One can generalize the invariant $\|\Delta_g\|$ of hyperelliptic Riemann surfaces to arbitrary compact and connected Riemann surfaces of positive genus, even to principally polarised complex abelian varieties. Let $(A,\Theta)$ be any principally polarised complex abelian variety of dimension $g\ge 1$ as in Section \ref{secinvariantsabelian}. We define the set $\mathcal{D}=\lbrace \eta\in \tfrac{1}{2}\Z^{2g}/\Z^{2g}~|~\theta[\eta](0)\neq 0\rbrace$ and we set
$$\|\Delta_g\|(A,\Theta)=2^{-4(g+1)\binom{2g}{g-1}}(\det Y)^{2r}\left|\sum_{\gf{\mathcal{J}\subseteq\mathcal{D}}{|\mathcal{J}|=r}}\prod_{\eta\in\mathcal{J}}\theta[\eta](0)^8\right|,$$
where $r=\tbinom{2g+1}{g+1}$. In particular, we have $\|\Delta_g\|(\Jac(X))=\|\Delta_g\|(X)$ if $X$ is a hyperelliptic Riemann surface of genus $g\ge 2$. Hence, we define $\|\Delta_g\|(X)=\|\Delta_g\|(\Jac(X))$ if $X$ is an arbitrary connected and compact Riemann surface of genus $g\ge 1$. However,
the first formula of Theorem \ref{hypdelta} and Corollaries \ref{discdeltavarphi} and \ref{discbound} are not true for arbitrary connected and compact Riemann surfaces. Indeed, we have
$$\tfrac{1}{\pi i}\partial\overline{\partial}\log\|\Delta_g\|(X)=4r\cdot\omega_{\mathrm{Hdg}}-\delta_Z$$ as forms on $\mathcal{M}_g$, where $Z\subseteq \mathcal{M}_g$ is the vanishing locus of $\|\Delta_g\|$. Comparing this with the forms (\ref{laplacekz}), (\ref{laplacedelta}) and (\ref{laplaceH3}), we notice that each of the mentioned formulas for hyperelliptic Riemann surfaces implies $3e_1^A=(2-2g)\int_{\pi_2}h^3$, which is not true in general on $\mathcal{M}_g$, see also \cite[Section 10]{dJo16}.

\subsection{Bounds for theta functions}
\label{secbound}
In this subsection we give an upper bound for the function $\|\theta\|$. This bound will be used in the next subsection to obtain an upper bound for the Arakelov--Green function.
Let $(A,\Theta)$ be any principally polarised complex abelian variety of dimension $g\ge1$ as in Section \ref{secinvariantsabelian}. We use the same notation as in Section \ref{secinvariantsabelian}, where we may assume, that the matrix $\Omega$ is Siegel reduced, see \cite[Chapter V.\S 4]{Igu72}.

Due to Autissier \cite[Proposition 1,1]{Aut15}, any $z\in A$ satisfies
\begin{align}\label{boundautissier}
\|\theta\|(z)\le c_g \det (Y)^{1/4},
\end{align}
where $c_g=\frac{g+2}{2}$ for $g\le 3$ and $c_g=\frac{g+2}{2}\left(\frac{g+2}{\pi\sqrt{3}}\right)^{g/2}$ for $g\ge 4$. Hence, it remains to find an upper bound for $\det (Y)$. This is done by the following lemma.
\begin{Lem}\label{Ybound}
	For any real number $s\ge 0$ we obtain
	$$s\log \det (Y)+H(A,\Theta)\le g\left(s+\tfrac{1}{4}\right)\log\tfrac{4s+1}{2}.$$
\end{Lem}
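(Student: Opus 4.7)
The plan is to refine the Jensen-inequality argument used in the proof of Proposition~\ref{Hbound} by introducing an auxiliary parameter. For each $\mu>0$, applying Jensen's inequality to the probability measure $\nu^g/g!$ on $A$ and the concave function $\log$ gives
$$2\mu\,H(A,\Theta)=\frac{1}{g!}\int_A\log\|\theta\|^{2\mu}\,\nu^g\le \log\!\left(\frac{1}{g!}\int_A\|\theta\|^{2\mu}\,\nu^g\right).$$
The key is therefore a sharp upper bound on the right-hand side. Matching the shape of the claimed inequality dictates the choice $\mu=1/(4s+1)\in(0,1]$; then, using the identities $1/(4\mu)=s+\tfrac14$ and $(\mu-1)/(4\mu)=-s$, a short algebraic manipulation reduces the lemma to establishing the sharp integral estimate
$$\frac{1}{g!}\int_A\|\theta\|^{2\mu}\,\nu^g\le (2\mu)^{-g/2}\det(Y)^{(\mu-1)/2}\qquad(0<\mu\le 1),$$
which is exactly equality in the normalization~($\theta$4) at $\mu=1$.

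For $0<\mu<1$ I would exploit the Gaussian structure of $\theta$. Writing $y=Y\beta$ and $F(z)=|\theta(z)|e^{-\pi y^TY^{-1}y}$, so that $\|\theta\|=\det(Y)^{1/4}F$, Parseval combined with an unfolding identifies
$$\frac{1}{g!}\int_A F^2\,\nu^g=\int_{\R^g}e^{-2\pi\gamma^T Y\gamma}\,d\gamma=2^{-g/2}\det(Y)^{-1/2},$$
which recovers the case $\mu=1$. For smaller $\mu$, I would combine this $L^2$-identity with the pointwise Autissier bound $F\le c_g$ recalled in~(\ref{boundautissier}), then interpolate via H\"older's inequality to extract the factor $\det(Y)^{(\mu-1)/2}$.

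Finally, substituting $\mu=1/(4s+1)$ into the Jensen bound, the contribution $(\mu-1)/(4\mu)\cdot\log\det(Y)=-s\log\det(Y)$ migrates to the left-hand side and combines with $+s\log\det(Y)$, while $-\tfrac{g}{4\mu}\log(2\mu)$ simplifies to exactly $g(s+\tfrac14)\log\tfrac{4s+1}{2}$, yielding the lemma.

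The main obstacle is establishing the sharp integral estimate above. The naive concavity bound $\tfrac{1}{g!}\int\|\theta\|^{2\mu}\,\nu^g\le 2^{-\mu g/2}$ obtained from $x\mapsto x^\mu$ carries no $\det(Y)$-dependence and is therefore insufficient to control $s\log\det(Y)$ for large $\det(Y)$. The missing $\det(Y)^{(\mu-1)/2}$ factor---precisely what forces the $\log\det(Y)$ term in the lemma---has to be extracted from the Fourier-analytic and Gaussian-envelope structure of $\theta$ via the interpolation between the $L^2$-identity and the pointwise Autissier bound outlined above.
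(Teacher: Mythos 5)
Your reduction is sound: with $\mu=1/(4s+1)\in(0,1]$, applying Jensen on $(A,\nu^g/g!)$ reduces the lemma to the estimate $\tfrac{1}{g!}\int_A\|\theta\|^{2\mu}\nu^g\le(2\mu)^{-g/2}\det(Y)^{(\mu-1)/2}$, and that estimate is in fact true. The gap is in the route you propose to prove it. Writing $F=\|\theta\|\det(Y)^{-1/4}$, you have $\tfrac{1}{g!}\int_A F^2\nu^g=2^{-g/2}\det(Y)^{-1/2}$ and $F\le c_g$. Interpolation between the $L^2$-norm and the $L^\infty$-norm of $F$ controls $\|F\|_{L^p}$ only for $p\ge 2$; here you need $p=2\mu<2$. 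In that direction H\"older/Jensen on a probability space gives only $\tfrac{1}{g!}\int_A F^{2\mu}\nu^g\le\bigl(\tfrac{1}{g!}\int_A F^2\nu^g\bigr)^{\mu}$, i.e.\ $\det(Y)^{-\mu/2}$ rather than $\det(Y)^{-1/2}$, which is precisely the deficiency you diagnose. The $L^\infty$ bound cannot repair it: since $t\mapsto t^{\mu}$ is concave, among all $F$ with the prescribed $L^2$-mass and $F\le c_g$ the integral $\int F^{2\mu}$ is maximised by the constant function, for which the Autissier bound is inactive and one is back at the Jensen bound. So no interpolation between the $L^2$-identity and a pointwise sup can produce the extra factor $\det(Y)^{(\mu-1)/2}$; you need to use more structure.

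What actually yields that factor is the lattice-Gaussian structure of $\theta$, not any $L^\infty$ input. The paper applies Jensen only in the inner $x$-integral over $F=[0,1]^g$, evaluates $\int_F\|\theta\|^2(x+\Omega y)\,\lambda(x)=\sqrt{\det Y}\sum_{n\in\Z^g}\exp\bigl(-2\pi\ltrans{(n+y)}Y(n+y)\bigr)$ by Parseval, then (after a second Jensen in $y$) pushes the fractional exponent through the sum via the subadditivity $(\sum_n a_n)^{1/(4s+1)}\le\sum_n a_n^{1/(4s+1)}$ — valid since $4s+1\ge1$ — and finally unfolds $\sum_{n}\int_F$ into a Gaussian integral over $\R^g$, giving $\det\bigl(\tfrac{2}{4s+1}Y\bigr)^{-1/2}$. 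It is this unfolding step that produces the full $\det(Y)^{-1/2}$. In the paper Autissier's bound (\ref{boundautissier}) does not enter this lemma at all; it is combined with the lemma only afterwards, in Corollary \ref{thetabound}. If you replace the interpolation step in your sketch by this Parseval--subadditivity--unfolding argument, your reorganisation of the proof goes through and recovers the lemma.
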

\begin{proof}
Write $F=[0,1]^g$ and denote by $\lambda$ the Lebesgue measure on $\R^g$.
We bound $H(A,\Theta)$ in the following way:
\begin{align*}
&H(A,\Theta)=\tfrac{1}{2}\int_F\int_F \log\|\theta\|(x+\Omega y)^2\lambda(x)\lambda(y) \\
\le& \tfrac{1}{2}\int_F\left(\log\int_F\|\theta\|(x+\Omega y)^2\lambda(x)\right)\lambda(y) \\
=&\tfrac{1}{2}\int_F\log\left(\sqrt{\det(Y)}\sum_{n\in\Z^g}\exp\left(-2\pi\ltrans{(m+y)}Y(m+y)\right)\right)\lambda(y)\\
\le&\tfrac{1}{4}\log\det(Y)\\ &+\tfrac{4s+1}{2}\log\int_F\left(\sum_{n\in \Z^g}\exp(-2\pi\ltrans{(m+y)}Y(m+y))\right)^{1/(4s+1)}\lambda(y)\\
\le&\tfrac{1}{4}\log\det(Y)+\tfrac{4s+1}{2}\log\int_F\sum_{n\in \Z^g}\exp(-2\pi\ltrans{(m+y)}(\tfrac{1}{4s+1}\cdot Y)(m+y))\lambda(y)\\
=&\tfrac{1}{4}\log\det(Y)\\ &-\tfrac{4s+1}{2}\log \sqrt{\det\left(\tfrac{2}{4s+1}\cdot Y\right)}=-s\log \det(Y)+g\left(s+\tfrac{1}{4}\right)\log\tfrac{4s+1}{2}.
\end{align*}
In the first line, we applied the definition of $H(X)$ and replaced the integral over $A$ by an integral over the fundamental domain $F+\Omega \cdot F$ in $\C^g$ for $A$. The second line follows by Jensen's inequality. The third line follows by Parseval's formula. In the fourth line we again applied Jensen's inequality. Since $s\ge 0$, we have $4s+1\ge 1$, such that $a^{4s+1}+b^{4s+1}\le(a+b)^{4s+1}$ for positive real numbers $a,b$. Applying this to the converging series of positive real numbers in the fourth line, we obtain the fifth line. For the computation of the integral in the second last line, see for example the proof of \cite[Proposition 8.5.6]{BL04}.
 \end{proof}
Combining the lemma with Autissier's bound (\ref{boundautissier}), we obtain the following bound for the theta function:
\begin{Cor}\label{thetabound}
For any real number $r> 0$ and any $z\in A$ it holds
$$\log\|\theta\|(z)+rH(A,\Theta)\le \log c_g+\tfrac{g(1+r)}{4}\log\tfrac{1+r}{2r}.$$
\end{Cor}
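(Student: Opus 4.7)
The plan is to combine the two stated ingredients by choosing the free parameter in Lemma \ref{Ybound} so that the $\log\det(Y)$ terms cancel against Autissier's bound. Take the logarithm of Autissier's inequality (\ref{boundautissier}) to obtain
\[
\log\|\theta\|(z) \le \log c_g + \tfrac{1}{4}\log\det(Y).
\]
The right-hand side still depends on the unknown quantity $\det(Y)$, so the strategy is to absorb this dependence using Lemma \ref{Ybound}, at the cost of introducing an $H(A,\Theta)$-term.

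Concretely, I would apply Lemma \ref{Ybound} with $s=\tfrac{1}{4r}$, which is admissible since $r>0$ implies $s\ge 0$, and then multiply the resulting inequality by $r$. Doing so yields
\[
\tfrac{1}{4}\log\det(Y) + rH(A,\Theta) \le rg\!\left(\tfrac{1}{4r}+\tfrac{1}{4}\right)\log\tfrac{(4/(4r))+1}{2}.
\]
The coefficient of $\log\det(Y)$ is now exactly $\tfrac{1}{4}$, matching the coefficient appearing in Autissier's bound with the opposite sign once the latter is rewritten as $\log\|\theta\|(z)-\tfrac{1}{4}\log\det(Y)\le\log c_g$. Adding the two inequalities eliminates $\log\det(Y)$ and produces the desired estimate, provided the right-hand side simplifies correctly.

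The only remaining task is the arithmetic simplification. With $s=\tfrac{1}{4r}$ we have $4s+1=\tfrac{1+r}{r}$ and $s+\tfrac{1}{4}=\tfrac{1+r}{4r}$, so
\[
rg\!\left(s+\tfrac{1}{4}\right)\log\tfrac{4s+1}{2}=r\cdot g\cdot\tfrac{1+r}{4r}\cdot\log\tfrac{1+r}{2r}=\tfrac{g(1+r)}{4}\log\tfrac{1+r}{2r}.
\]
Substituting into the sum of the two inequalities gives
\[
\log\|\theta\|(z)+rH(A,\Theta)\le \log c_g+\tfrac{g(1+r)}{4}\log\tfrac{1+r}{2r},
\]
which is the statement of the corollary.

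There is essentially no obstacle here beyond finding the correct value of the parameter $s$; everything else is bookkeeping. The heart of the argument has already been done in Lemma \ref{Ybound} (via the Jensen/Parseval manipulations bounding $H$ against $\det(Y)$) and in Autissier's bound. The corollary is just the optimal linear combination of these two pieces of information designed to eliminate the unbounded quantity $\log\det(Y)$.
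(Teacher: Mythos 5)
Your proposal is correct and is exactly the argument the paper intends: the corollary is obtained by combining Autissier's bound (\ref{boundautissier}) with Lemma \ref{Ybound} at the parameter value $s=\tfrac{1}{4r}$ (admissible since $r>0$), which cancels the $\log\det(Y)$ terms, and your arithmetic simplification matches the stated right-hand side. Nothing further is needed.
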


\subsection{The Arakelov--Green function}
\label{green}
We give an explicit expression for the Arakelov--Green function by calculating Bost's invariant $A(X)$ in (\ref{Bostinvariant}). Furthermore, we will bound the supremum of the Arakelov--Green function in terms of $\delta(X)$ and we give another expression for $\delta$. Let $X$ be any compact and connected Riemann surface of genus $g\ge 1$.
\begin{Thm}\label{propgreen}
	It holds
	$$g(P,Q)=\tfrac{1}{g!}\int_{\Theta+P-Q}\log\|\theta\|\nu^{g-1}+\tfrac{1}{2g} \varphi(X)-H(X).$$
\end{Thm}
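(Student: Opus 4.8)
The plan is to pin down Bost's constant $A(X)$ in (\ref{Bostinvariant}), so that it suffices to prove $A(X)=\tfrac{1}{2g}\varphi(X)-H(X)$. First I would integrate (\ref{Bostinvariant}) over $P$ against $\mu(P)$ and use (G4) to get $A(X)=-\tfrac{1}{g!}\int_X\big(\int_{\Theta+P-Q}\log\|\theta\|\nu^{g-1}\big)\mu(P)$. Parametrising $\Theta$ by the Abel--Jacobi map (pulling back by $\Phi_{g-1}$ with $R_1=P$, cf.\ Section \ref{integraltheta}), discarding the negligible preimage of $\Theta^{\mathrm{sing}}$, and using Lemma \ref{lemformen} with $k=g-1$ to replace $\Phi_{g-1}^{*}\nu^{g-1}$ by $\tfrac{1}{g}\int_{P_g}\Phi^{*}\nu^{g}$, this becomes
$$A(X)=-J,\qquad J:=\tfrac{1}{(g!)^2}\int_{X^{g+1}}\log\|\theta\|(P_1+\dots+P_{g-1}+P_{g+1}-Q)\,\Phi^{*}\nu^{g}(P_1,\dots,P_g)\wedge\mu(P_{g+1}).$$

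Next I would substitute Faltings' defining equation for $\delta$ into the factor $\|\theta\|(P_1+\dots+P_{g-1}+P_{g+1}-Q)$ (using the $g$ points $P_1,\dots,P_{g-1},P_{g+1}$ and base point $Q$) and integrate term by term against $\tfrac{1}{(g!)^2}\Phi^{*}\nu^{g}(P_1,\dots,P_g)\wedge\mu(P_{g+1})$. The contributions of $g(P_j,P_{g+1})$, $g(P_j,Q)$ and $g(P_{g+1},Q)$ vanish (either by integrating over $P_{g+1}$, or by the vanishing identity following Lemma \ref{lemformen}), the $\binom{g-1}{2}$ terms $g(P_j,P_k)$ with $j<k\le g-1$ contribute $-\tfrac{g-2}{4g}\varphi(X)$ by Lemma \ref{lemgsigmaint}, and one is left with
$$J=-\tfrac18\delta(X)+T-\tfrac{g-2}{4g}\varphi(X),\qquad T:=\tfrac{1}{(g!)^2}\int_{X^{g+1}}\log\|\det(\psi_j(R_k))\|_{\Ar}\,\Phi^{*}\nu^{g}(P_1,\dots,P_g)\wedge\mu(P_{g+1}),$$
where in $T$ the determinant is taken at $R_1=P_1,\dots,R_{g-1}=P_{g-1},R_g=P_{g+1}$. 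The same computation applied to $\|\theta\|(P_1+\dots+P_g-Q)$, integrated against $\tfrac1{(g!)^2}\Phi^{*}\nu^{g}$, together with (\ref{intpb}), gives $W_2=H(X)+\tfrac18\delta(X)+\tfrac14\varphi(X)$, where $W_2:=\tfrac1{(g!)^2}\int_{X^g}\log\|\det(\psi_j(P_k))\|_{\Ar}\Phi^{*}\nu^{g}$.

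The decisive step is a second evaluation of $W_2$ by a Wronskian argument. For an effective divisor $D=P_1+\dots+P_{g-1}$ with $h^0(\O(D))=1$, expanding $\det(\psi_j(R_k))$ along the last column shows that $P\mapsto\det(\psi_j((D,P)_k))$ is, up to a factor depending only on $D$, the value at $P$ of a holomorphic one-form $\omega_D$; it vanishes at each $P_k$ (two equal columns), so $\mathrm{div}(\omega_D)=D+\sigma(D)$. Comparing $\tfrac1{\pi i}\partial\overline\partial$ of both sides, using $c_1(\Omega^1_X,\|\cdot\|_{\Ar})=(2g-2)\mu$ (this is (\ref{hdelta}) restricted to the diagonal) and the characterisation of $g$ by (G1)--(G4), I get $\log\|\det(\psi_j((D,P)_k))\|_{\Ar}=g(P,D)+g(P,\sigma(D))+\lambda(D)$ with $\lambda$ a function of $D$ alone (and $g(P,D)=\sum_k g(P,P_k)$). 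Substituting this into $W_2$ with $D=P_1+\dots+P_{g-1}$, $P=P_g$: the $g(P_g,D)$-part contributes $\tfrac1{2g}\varphi(X)$ by Lemma \ref{lemgsigmaint}, the $g(\sigma(P_1+\dots+P_{g-1}),P_g)$-part contributes $\tfrac1{2g}\varphi(X)$ by Lemma \ref{lemgsigmavarphi}, and the $\lambda$-part equals $T$ after integrating out $P_g$ with Lemma \ref{lemformen}. Hence $W_2=\tfrac1g\varphi(X)+T$, so $T=H(X)+\tfrac18\delta(X)+\tfrac14\varphi(X)-\tfrac1g\varphi(X)$; inserting this into the formula for $J$ the $\delta$-terms cancel and $J=H(X)+\big(\tfrac14-\tfrac1g-\tfrac{g-2}{4g}\big)\varphi(X)=H(X)-\tfrac1{2g}\varphi(X)$. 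Therefore $A(X)=-J=\tfrac1{2g}\varphi(X)-H(X)$, and (\ref{Bostinvariant}) yields the theorem. I expect the main obstacle to be the Wronskian identity for $\det(\psi_j((D,P)_k))$ and the bookkeeping needed to check that its ``constant'' really depends on $D$ only; after that everything reduces to the integral identities of Section \ref{Sectionintegrals} and to (G4).
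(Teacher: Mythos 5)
Your argument is correct, but it takes a genuinely different route through the key middle step. Both proofs begin identically: integrate Bost's formula (\ref{Bostinvariant}) against $\mu(P)$, parametrise $\Theta$ by the Abel--Jacobi map, and use Lemma \ref{lemformen} to pass to the measure $\Phi^{*}\nu^{g}$. The paper then finishes by invoking de Jong's function $\|\Lambda\|$ from (\ref{Lambda}), whose independence of $P$ and $Q$ is quoted from de Jong: this lets it trade the $\mu$-integrated extra point for a $\Phi^{*}\nu^{g}$-variable, after which (\ref{intpb}) and Lemma \ref{lemgsigmavarphi} give $A(X)=\tfrac{1}{2g}\varphi(X)-H(X)$ directly, with no appearance of $\delta$ at all. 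You instead avoid $\|\Lambda\|$ entirely: you apply Faltings' defining equation for $\delta$ twice (to $P_1+\dots+P_{g-1}+P_{g+1}-Q$ and to $P_1+\dots+P_g-Q$) and prove from scratch the Wronskian decomposition $\log\|\det(\psi_j((D,P)_k))\|_{\Ar}=g(P,D)+g(P,\sigma(D))+\lambda(D)$, which plays exactly the role of de Jong's quoted independence statement (indeed, combined with Faltings' equation it reproves it); the $\delta$-terms then cancel, and the coefficient bookkeeping $\tfrac14-\tfrac1g-\tfrac{g-2}{4g}=-\tfrac{1}{2g}$ checks out, as do the uses of Lemmas \ref{lemgsigmaint} and \ref{lemgsigmavarphi} and the vanishing of the mixed Green terms. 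What your route buys is self-containedness (no appeal to \cite[Proposition 4.3]{dJo08}), at the cost of being longer and of introducing and cancelling $\delta$; the paper's route is shorter but leans on the cited result. One small caveat: your argument, like the lemmas of Section \ref{Sectionintegrals} it uses, implicitly assumes $g\ge 2$ (you need $\Theta^{sm}$, $\sigma(D)$, and Lemma \ref{lemgsigmaint}); the case $g=1$ should be disposed of separately, as the paper does by citing Faltings' elliptic computation.
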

\begin{proof}
For $g=1$ this is just a reformulation of Faltings formula in \cite[Section 7]{Fal84}. Hence, we may assume $g\ge 2$.
Integrating (\ref{Bostinvariant}) with $\mu(P)$ gives
$$-A(X)=\tfrac{1}{g!}\int_{X}\left(\int_{\Theta+P-Q}\log\|\theta\|\nu^{g-1}\right)\mu(P).$$
We define the map
$$\Phi_{\Theta}\colon X^{g-1}\to \Theta,\quad(P_1,\dots,P_{g-1})\mapsto P_1+\dots+P_{g-1},$$
which is smooth, surjective and generically of degree $(g-1)!$.
Since $\nu$ is translation-invariant, we conclude that
$$-A(X)=\tfrac{1}{(g-1)!g!}\int_{X^{g}}\log\|\theta\|(P_1+\dots+P_g-Q)\Phi_{\Theta}^*\nu^{g-1}(P_1,\dots,P_{g-1})\mu(P_g).$$
Using the function $\|\Lambda\|$ in (\ref{Lambda}), we obtain
\begin{align*}
-A(X)=&\tfrac{1}{(g-1)!g!}\int_{X^{g-1}}\log\|\Lambda\|(P_1+\dots+P_{g-1})\Phi_{\Theta}^*\nu^{g-1}(P_1,\dots,P_{g-1})\\
=&\tfrac{1}{(g!)^2}\int_{X^{g}}\log\|\Lambda\|(P_1+\dots+P_{g-1})\Phi^*\nu^{g}(P_1,\dots,P_g),
\end{align*}
since the Arakelov--Green functions in (\ref{Lambda}) integrates to $0$. The latter equality follows by Lemma \ref{lemformen}.
If we again substitute $\log \|\Lambda\|$ by (\ref{Lambda}) in the last expression, only the integrals of $\log\|\theta\|(P_1+\dots+P_g-Q)$ and of $-g(\sigma(P_1+\dots+P_{g-1}),P_g)$ are non-zero. The first one gives $H(X)$ and the second one equals $-\tfrac{1}{2g}\varphi(X)$ by Lemma \ref{lemgsigmavarphi}.
Thus, we obtain the identity $A(X)=\tfrac{1}{2g}\varphi(X)-H(X)$.
 \end{proof}

As a corollary we bound the Arakelov--Green function in terms of $\delta(X)$.
\begin{Cor}\label{greendelta}
For any real number $r$ satisfying $r\ge \tfrac{6}{g}-1$ and $r>0$ we have
$$\sup_{P,Q\in X}g(P,Q)\le\tfrac{1+r}{24}\delta(X)+\tfrac{g(1+r)}{3}\log 2\pi+\log c_g+\tfrac{g(1+r)}{4}\log\tfrac{1+r}{2r}.$$
In particular, the Arakelov--Green function is bounded by
$$\sup_{P,Q\in X}g(P,Q)<\tfrac{1}{24g}\max(6,g+1)\delta(X)+\tfrac{3}{4}g\cdot\log g+4.$$ 
\end{Cor}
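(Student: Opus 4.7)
The plan is to combine Theorem \ref{propgreen}, the pointwise theta bound of Corollary \ref{thetabound}, and the Faltings-delta identity $\delta(X) = -24H(X) + 2\varphi(X) - 8g\log 2\pi$ of Theorem \ref{generalthm}.

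My first step is to apply Theorem \ref{propgreen} to write
\[
g(P,Q) \;=\; \tfrac{1}{g!}\int_{\Theta+P-Q}\log\|\theta\|\,\nu^{g-1} \;+\; \tfrac{1}{2g}\varphi(X) \;-\; H(X),
\]
and, using that $\tfrac{1}{g!}\int_{\Theta}\nu^{g-1}=1$, to estimate the integral by $\sup_{z\in\Pic_{g-1}(X)}\log\|\theta\|(z)$. Corollary \ref{thetabound} then supplies, for every $r>0$,
\[
\sup_z \log\|\theta\|(z) \;\le\; -rH(X) + \log c_g + \tfrac{g(1+r)}{4}\log\tfrac{1+r}{2r}.
\]
Combining the two estimates yields an upper bound for $g(P,Q)$ in terms of $H(X)$, $\varphi(X)$ and the explicit constants, namely
\[
g(P,Q)\;\le\;-(1+r)H(X)+\tfrac{1}{2g}\varphi(X)+\log c_g+\tfrac{g(1+r)}{4}\log\tfrac{1+r}{2r}.
\]

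I would then eliminate $H(X)$ via Theorem \ref{generalthm}, substituting $H(X) = -\tfrac{\delta(X)}{24} + \tfrac{\varphi(X)}{12} - \tfrac{g}{3}\log 2\pi$, and collect terms. A short calculation shows that the resulting coefficient of $\varphi(X)$ equals $\tfrac{6-g(1+r)}{12g}$, which is nonpositive exactly when $r \ge \tfrac{6}{g} - 1$. The hypothesis on $r$ in the corollary is therefore precisely what is needed to discard the $\varphi$-contribution using $\varphi(X)\ge 0$ from (\ref{kzbound}), producing the first displayed inequality.

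For the explicit ``in particular'' bound I would optimise by choosing $r = \tfrac{6}{g}-1$ when $g\le 5$ and $r = \tfrac{1}{g}$ when $g\ge 5$, so that in both regimes $g(1+r) = \max(6, g+1)$ and the coefficient of $\delta(X)$ becomes $\max(6,g+1)/(24g)$. With this choice, the two dominant $g\log g$ contributions to the residual constant come from $\log c_g \sim \tfrac{g}{2}\log g$ (via the explicit formula $c_g = \tfrac{g+2}{2}\bigl(\tfrac{g+2}{\pi\sqrt{3}}\bigr)^{g/2}$ for $g\ge 4$) and from $\tfrac{g(1+r)}{4}\log\tfrac{1+r}{2r} \sim \tfrac{g}{4}\log g$ (since $(1+r)/(2r)\sim g/2$), summing to $\tfrac{3g}{4}\log g$, while all remaining terms, including $\tfrac{g(1+r)}{3}\log 2\pi$, are $O(g)$. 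The only non-mechanical part will be verifying that these lower-order contributions together with the small-genus cases $g\le 3$ (where $c_g=(g+2)/2$) are uniformly absorbed into the additive constant $4$ and that the strict inequality is preserved; once the optimal $r$ is identified, the algebraic manipulations leading to both bounds are essentially forced.
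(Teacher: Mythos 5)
Your proposal is correct and follows essentially the same route as the paper's own proof: apply Theorem \ref{propgreen} together with Corollary \ref{thetabound}, eliminate $H(X)$ via Theorem \ref{generalthm}, use $\varphi(X)\ge 0$ with the constraint $r\ge\tfrac{6}{g}-1$ to discard the $\varphi$-term, and then specialise $r$ (your split at $g\ge 5$ agrees with the paper's $g\ge 6$ because the two choices of $r$ coincide when $g=5$). The final verification of the additive constant $4$ is left as explicit bounding in both accounts.
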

\begin{proof}
	Since $\int_{\Theta+P-Q}\nu^{g-1}=g!$, applying the bound in Corollary \ref{thetabound} to Theorem \ref{propgreen} yields
	$$\sup_{P,Q\in X}g(P,Q)\le \tfrac{1}{2g}\varphi(X)-(1+r)H(X)+\log c_g+\tfrac{g(1+r)}{4}\log\tfrac{1+r}{2r}.$$
By Theorem \ref{generalthm} and (\ref{kzbound}) we get
	$$\tfrac{1}{2g}\varphi(X)-(1+r)H(X)\le \tfrac{1+r}{24}\delta(X)+\tfrac{g(1+r)}{3}\log 2\pi.$$
The first bound in the corollary follows by combining these two inequalities. One obtains the second assertion by putting $r=\tfrac{6}{g}-1$ for $g\le 5$ and $r=1/g$ for $g\ge 6$ and bounding the constants explicitly.	
 \end{proof}

As an application of the proof of Theorem \ref{propgreen}, we obtain a formula for $\delta(X)$ only in terms of integrals of the function $\log\|\theta\|$.
\begin{Cor}
We have
$$\delta(X)=-\tfrac{4g}{g!}\int_X\left(\int_{\Theta+P-Q}\log\|\theta\|\nu^{g-1}\right)\mu(P)+(4g-24)H(X)-8g\log 2\pi.$$
\end{Cor}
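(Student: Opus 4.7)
The plan is to combine three facts already at our disposal: (i) the identity $-A(X)=\tfrac{1}{g!}\int_X\bigl(\int_{\Theta+P-Q}\log\|\theta\|\nu^{g-1}\bigr)\mu(P)$ that was established in the first lines of the proof of Theorem \ref{propgreen}, (ii) Theorem \ref{propgreen} itself, which reads $A(X)=\tfrac{1}{2g}\varphi(X)-H(X)$, and (iii) the main Theorem \ref{generalthm}, $\delta(X)=-24H(X)+2\varphi(X)-8g\log 2\pi$.

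First I would combine (i) and (ii) to solve for $\varphi(X)$ in terms of the iterated $\log\|\theta\|$-integral:
\begin{equation*}
\varphi(X)=2gH(X)-\tfrac{2g}{g!}\int_X\left(\int_{\Theta+P-Q}\log\|\theta\|\,\nu^{g-1}\right)\mu(P).
\end{equation*}
Next I would substitute this expression for $\varphi(X)$ into the formula of Theorem \ref{generalthm}. The $H(X)$-terms combine as $-24H(X)+4gH(X)=(4g-24)H(X)$, the iterated integral comes out with coefficient $-\tfrac{4g}{g!}$, and the constant $-8g\log 2\pi$ is unchanged. This yields exactly the asserted identity.

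There is essentially no obstacle: every ingredient is already established, so the corollary is a one-line algebraic manipulation once the expression for $A(X)$ from the proof of Theorem \ref{propgreen} is recorded. The only point worth flagging is that one must use the intermediate formula from inside that proof (before it is rewritten in its final $\varphi$-$H$ form) to make the iterated integral appear; afterwards Theorem \ref{generalthm} does the rest.
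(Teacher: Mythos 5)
Your argument is correct and is essentially the same as the paper's: the paper likewise invokes the intermediate identity $-\tfrac{4g}{g!}\int_X\bigl(\int_{\Theta+P-Q}\log\|\theta\|\nu^{g-1}\bigr)\mu(P)=2\varphi(X)-4gH(X)$ extracted from the proof of Theorem \ref{propgreen} and then substitutes into Theorem \ref{generalthm}. Your version merely spells out the two-step derivation of that intermediate identity, but the logic is identical.
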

\begin{proof}
By the proof of Theorem \ref{propgreen} we have
$$-\tfrac{4g}{g!}\int_X\left(\int_{\Theta+P-Q}\log\|\theta\|\nu^{g-1}\right)\mu(P)=2\varphi(X)-4g H(X).$$
If we apply this to Theorem \ref{generalthm}, we obtain the corollary.
 \end{proof}

\section{The case of abelian varieties}\label{Sectionabelian}
We state formulas for $\delta(X)$ and $\varphi(X)$ only in terms of $H(X)$ and $\Lambda(X)$, such that we obtain canonical extensions of the functions $\delta$ and $\varphi$ to the moduli space of indecomposable principally polarised complex abelian varieties. Further, we discuss some of the asymptotics of these extensions.
\subsection{The delta invariant of abelian varieties}
\label{secabelian}
We deduce the following expressions for $\delta$ and $\varphi$ from the expressions in Theorem \ref{generalthm} and formula (\ref{deltaeta}).
\begin{Thm}\label{thmextension}
For any compact and connected Riemann surface $X$ of genus $g\ge 1$, the invariant $\delta(X)$ satisfies
$$\delta(X)=2(g-7)H(X)-2\Lambda(X)-4g\log 2\pi.$$
Further, the invariant $\varphi(X)$ satisfies
$$\varphi(X)=(g+5)H(X)-\Lambda(X)+2g\log 2\pi.$$
\end{Thm}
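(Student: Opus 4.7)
The plan is to integrate the logarithm of de Jong's formula (\ref{deltaeta}) over the theta divisor against $\nu^{g-1}/g!$ to obtain one linear relation among $\delta(X)$, $\varphi(X)$, $H(X)$, and $\Lambda(X)$, then combine that relation with the identity of Theorem~\ref{generalthm} to solve a $2\times 2$ linear system for $\delta(X)$ and $\varphi(X)$.

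Carrying this out, I would pull back the integration to $X^{g-1}$ via the map $\Phi_\Theta\colon (P_1,\ldots,P_{g-1})\mapsto P_1+\cdots+P_{g-1}$ of generic degree $(g-1)!$, using $\int_\Theta\nu^{g-1}=g!$. The left hand side becomes $\Lambda(X)$ by definition, the constant term contributes $-\tfrac14\delta(X)$, and the Arakelov--Green summands $g(P_j,Q)$ integrate to $0$: by Lemma~\ref{lemformen} the push-forward of $\Phi_\Theta^*\nu^{g-1}$ to $X$ in the free variable $P_j$ is a multiple of $\mu(P_j)$, and $\int_X g(P_j,Q)\mu(P_j)=0$. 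By the symmetry of $\Phi_\Theta^*\nu^{g-1}$ in $P_1,\dots,P_{g-1}$, the $g-1$ remaining $\log\|\theta\|$-terms contribute $(g-1)J_\nu$ with
\[
J_\nu=\frac{1}{g!(g-1)!}\int_{X^{g-1}}\log\|\theta\|(2P_1+P_2+\cdots+P_{g-1}-Q)\,\Phi_\Theta^*\nu^{g-1}.
\]

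The heart of the argument is the identity $J_\nu=H(X)-\tfrac{\varphi(X)}{2(g-1)}$. To prove it, I would lift $J_\nu$ to an integral over $X^g$ against $\Phi^*\nu^g$ via Lemma~\ref{lemformen} (namely $\int_X\Phi^*\nu^g=g\,\Phi_\Theta^*\nu^{g-1}$) and compare it to the expression~(\ref{intpb2}) for $H(X)$. The two integrands agree except that the last summand inside $\log\|\theta\|$ is $-Q$ rather than $-P_g$. To control this difference, I would apply the independence statement~(\ref{Lambda}) of Remark~\ref{independofQ} to $D=P_1+\cdots+P_{g-1}$ with $P=P_1$, taking the other auxiliary point once to be $Q$ and once $P_g$ and subtracting, yielding a pointwise identity expressing the integrand difference as $g(P_1,P_g)+g(D,P_g)-g(P_1,Q)-g(D,Q)$. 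After integration against $\Phi^*\nu^g$ the $Q$-terms vanish, while Lemma~\ref{lemgsigmaint} evaluates each surviving $\int g(P_j,P_g)\Phi^*\nu^g$ to $\tfrac{(g!)^2\varphi(X)}{2g(g-1)}$, summing to $\tfrac{(g!)^2\varphi(X)}{2(g-1)}$ and giving the claim for $J_\nu$.

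Combining the steps yields the linear relation $\Lambda(X)=-\tfrac14\delta(X)+(g-1)H(X)-\tfrac12\varphi(X)$; substituting into this the identity of Theorem~\ref{generalthm} and solving the resulting linear system produces precisely the two formulas of the theorem. (The edge case $g=1$ is immediate: $\Theta$ is a single point, and evaluating (\ref{deltaeta}) at the empty divisor gives $\Lambda(X)=-\tfrac14\delta(X)$, which together with Theorem~\ref{generalthm} yields the stated formulas without invoking Step~2.) The main obstacle is establishing the identity for $J_\nu$: the right choice of divisor $D$ and auxiliary points in (\ref{Lambda}) is essential, since it converts an otherwise inaccessible $\log\|\theta\|$-difference into Arakelov--Green functions whose integrals are computed by Lemmas~\ref{lemformen} and~\ref{lemgsigmaint}.
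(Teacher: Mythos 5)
Your proof is correct and arrives at the same intermediate relation $\Lambda(X)=(g-1)H(X)-\tfrac14\delta(X)-\tfrac12\varphi(X)$ as the paper, but the step that handles the $\log\|\theta\|$-contributions is more roundabout than it needs to be. The paper's citation of (\ref{intpb2}) together with Lemma \ref{lemgsigmaint} signals a shortcut you did not take: since (\ref{deltaeta}) holds for a generic choice of the auxiliary point, one may simply set $Q=P_g$ before taking logarithms and integrating against $\tfrac{1}{(g!)^2}\Phi^*\nu^g$ over $X^g$. Each term $\log\|\theta\|(P_1+\cdots+P_{g-1}+P_j-P_g)$ is then, up to a permutation of coordinates, exactly the integrand of (\ref{intpb2}) and contributes $H(X)$, while $-g\sum_{j=1}^{g-1}g(P_j,P_g)$ is handled directly by Lemma \ref{lemgsigmaint}, giving $-\tfrac12\varphi(X)$. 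Your route — fixing $Q$, introducing $J_\nu$, lifting from $X^{g-1}$ to $X^g$, and invoking Remark \ref{independofQ} to express $J_\nu-H(X)$ through Green-function integrals — is a valid substitute for this single substitution, and the key identity $J_\nu=H(X)-\tfrac{\varphi(X)}{2(g-1)}$ is correct, but it manufactures additional bookkeeping. Two small cosmetic points: the ``integrand difference'' you write down, $g(P_1,P_g)+g(D,P_g)-g(P_1,Q)-g(D,Q)$, is the negative of $\log\|\theta\|(D+P_1-Q)-\log\|\theta\|(D+P_1-P_g)$, so you are implicitly taking the $P_g$-side minus the $Q$-side, consistent with your final value for $J_\nu$; and your appeal to Lemma \ref{lemformen} to kill the $g(P_j,Q)$-integrals against $\Phi_\Theta^*\nu^{g-1}$ is slightly off-label, since that lemma is stated for $\Phi^*\nu^g$ — it is cleaner, as the paper does, to lift to $X^g$ first so the lemma applies verbatim. (For $g=1$ the paper simply defers to Faltings' elliptic-curve computations; your interpretation via the empty product in (\ref{deltaeta}) is a plausible reading but not how the paper argues.)
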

\begin{proof}
If $g=1$, the equations can again be obtained from \cite[Section 7]{Fal84}. We leave it to the reader, to work this out.
Hence, we may assume $g\ge 2$.

If we integrate the logarithm of formula (\ref{deltaeta}) with respect to $\Phi^*\nu^g$, we obtain by equation (\ref{intpb2}) and by Lemma \ref{lemgsigmaint}
$$\tfrac{1}{(g!)^2}\int_{X^g}\log\|\eta\|(P_1+\dots+P_{g-1})\Phi^*\nu^g=(g-1)H(X)-\tfrac{1}{4}\delta(X)-\tfrac{1}{2}\varphi(X).$$
Denote by $\Phi_{\Theta}$ the map defined in Section \ref{green}. We have
\begin{align*}
\Lambda(X)&=\tfrac{1}{(g-1)!g!}\int_{X^{g-1}}\log\|\eta\|(P_1+\dots+P_{g-1})\Phi_{\Theta}^*\nu^{g-1}\\
&=\tfrac{1}{(g!)^2}\int_{X^g}\log\|\eta\|(P_1+\dots+P_{g-1})\Phi^*\nu^g,
\end{align*}
where the latter equality follows from Lemma \ref{lemformen}. Putting both equations together, we obtain
$$\Lambda(X)=(g-1)H(X)-\tfrac{1}{4}\delta(X)-\tfrac{1}{2}\varphi(X).$$
Now both formulas in the theorem follow by Theorem \ref{generalthm}.
 \end{proof}

Let $(A,\Theta)$ be an indecomposable principally polarised complex abelian variety of dimension $g\ge2$ as in Section \ref{secinvariantsabelian}. We define
\begin{align*}
\delta(A,\Theta)&=2(g-7)H(A,\Theta)-2\Lambda(A,\Theta)-4g\log 2\pi,\\
\varphi(A,\Theta)&=(g+5)H(A,\Theta)-\Lambda(A,\Theta)+2g\log 2\pi.
\end{align*}
Then we have $\delta(\Jac(X))=\delta(X)$ and $\varphi(\Jac(X))=\varphi(X)$ for any compact and connected Riemann surface $X$ by Theorem \ref{thmextension}. Hence, we obtain canonical extensions of $\delta$ and $\varphi$ to the moduli space of indecomposable principally polarised complex abelian varieties.
For Riemann surfaces we have the bounds $\varphi(X)>0$ and $\delta(X)>-2g\log2\pi^4$. It is a natural question whether these bounds are still true for the extended versions of $\delta$ and $\varphi$.
\begin{Que}
Do all indecomposable principally polarised complex a\-belian varieties $(A,\Theta)$ of dimension $g\ge 2$ satisfy $\varphi(A,\Theta)> 0$?
\end{Que}
If the answer of this question is yes, we will also obtain the lower bound $\delta(A,\Theta)>-2g\log 2\pi^4$. If the answer is no, $\varphi$ could be seen as an indicator for an abelian variety to be a Jacobian.

Finally in this section, we consider the Hain--Reed invariant $\beta_g(X)$ of any compact and connected Riemann surface $X$ of genus $g\ge 2$, which was introduced by Hain and Reed in \cite{HR04} as a quotient of two canonical metrics on $\left(\bigwedge^g H^0(X,\Omega_X^1)\right)^{\otimes 8g+4}$. This invariant is only defined modulo constants on $\mathcal{M}_g$. De Jong obtained a canonical normalization by proving that $\tfrac{1}{3}((2g-2)\varphi(X)+(2g+1)\delta(X))$ is a representative of $\beta_g(X)$, see \cite[Theorem 1.4]{dJo13}.
Hence, we can also define $\beta_g$ for indecomposable principally polarised complex abelian varieties by
$$\beta_g(A,\Theta)=2(g-4)(g+1)H(A,\Theta)-2g\Lambda(A,\Theta)-\tfrac{4g(g+2)}{3}\log 2\pi.$$
By Theorem \ref{thmextension} we have $\beta_g(\Jac(X))=\beta_g(X)$ for any compact and connected Riemann surface $X$ of genus $g\ge2$.

\subsection{Asymptotics}\label{secasmypt}
Next, we discuss some of the asymptotics of the extended versions of the invariants $\delta$ and $\varphi$ for degenerating families of indecomposable principally polarised complex abelian varieties. We denote by $D\subseteq \C$ the open unit disc, and we write $f(t)=O(g(t))$ for two functions $f,g\colon D\to \R$ if there exists a bound $M\in\R$ not depending on $t$, such that $|f(t)|\le M\cdot |g(t)|$ for all $t\in D$.
If $\mathscr{X}\to D$ is a family of complex curves, such that $\mathscr{X}_t$ is a Riemann surface if and only if $t\neq 0$ and $\mathscr{X}_0$ has exactly one node, then Jorgenson \cite{Jor90}, Wentworth \cite{Wen91} and de Jong \cite{dJo14a} showed that $\delta(\mathscr{X}_t)$ and $\varphi(\mathscr{X}_t)$ go to infinity for $t\to 0$.

It is a natural question, whether the same is true for the extended versions of $\delta$ and $\varphi$ on the moduli space of indecomposable principally polarised complex abelian varieties. As a first step, we obtain the following asymptotic behaviour of $\delta$ and $\varphi$ for families of indecomposable principally polarised complex abelian varieties degenerating to a decomposable principally polarised complex abelian variety.
\begin{Pro}
Let $\tau\colon D\to \mathds{H}_g$ be a holomorphic embedding and write $(A_t,\Theta_t)$ for the principally polarised complex abelian variety associated to $\tau(t)$. If $(A_t,\Theta_t)$ is indecomposable for $t\neq 0$ and $(A_0,\Theta_0)$ is the product of two indecomposable 
principally polarised complex abelian varieties $(A_1,\Theta_1)$ and $(A_2,\Theta_2)$ of positive dimensions $g_1$, respectively $g_2$, then it holds
$$\lim_{t\to 0} H(A_t,\Theta_t)=H(A_1,\Theta_1)+H(A_2,\Theta_2),$$
$$\Lambda(A_t,\Theta_t)-\tfrac{2g_1 g_2}{g}\log |t|=O(1),$$
 $$\delta(A_t,\Theta_t)+\tfrac{4g_1 g_2}{g}\log |t|=O(1) \text{ and}$$
$$\varphi(A_t,\Theta_t)+\tfrac{2g_1 g_2}{g}\log |t|=O(1).$$
\end{Pro}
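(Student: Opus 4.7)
The asymptotics for $\delta$ and $\varphi$ will reduce, via the defining formulas of Section~\ref{secabelian}, to the two substantive claims: the continuity of $H$ at $t=0$ with $H(A_0,\Theta_0)=H(A_1,\Theta_1)+H(A_2,\Theta_2)$, and the $\tfrac{2g_1 g_2}{g}\log|t|$-blow-up of $\Lambda$. For $H$, I would pull the integral $\tfrac{1}{g!}\int_{A_t}\log\|\theta\|(\tau(t);\cdot)\,\nu_{\tau(t)}^{g}$ back to a fixed fundamental parallelogram in $\C^g$. The integrand depends continuously on $(t,z)$ away from $\{\theta(\tau(t);\cdot)=0\}$ and has a $\log$-type $L^1$-singularity along it with uniform local bounds in $t$, so dominated convergence gives $\lim_{t\to 0}H(A_t,\Theta_t)=H(A_0,\Theta_0)$. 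At $t=0$ the factorizations $\|\theta_0\|(z_1,z_2)=\|\theta_1\|(z_1)\|\theta_2\|(z_2)$ and $\tfrac{\nu_0^{g}}{g!}=\tfrac{\nu_1^{g_1}}{g_1!}\wedge\tfrac{\nu_2^{g_2}}{g_2!}$, combined with $\int_{A_j}\tfrac{\nu_j^{g_j}}{g_j!}=1$, split the integral over $A_1\times A_2$ into $H(A_1,\Theta_1)+H(A_2,\Theta_2)$.

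For $\Lambda$, expand $\theta(\tau(t);z)=\theta_1(z_1)\theta_2(z_2)+t\,P(z)+O(t^{2})$, where $P$ is explicit in terms of the off-diagonal block of $\partial_t\tau(0)$ and the first derivatives of $\theta_1,\theta_2$. The limit divisor $\Theta_0=(\Theta_1\times A_2)\cup(A_1\times\Theta_2)$ is a normal-crossings divisor with singular locus $\Theta_1\times\Theta_2$ of codimension two. At a generic smooth point of the first component (i.e.\ $\theta_1(z_1)=0$ and $\theta_2(z_2)\neq 0$), the $g_2\times g_2$ second-derivative block of $\theta$ in the $z_2$-directions and the $z_2$-part of $\nabla\theta$ all vanish at $t=0$, so the $(g_2+1)$ rows of the $(g+1)\times(g+1)$ matrix defining $\eta$ corresponding to $z_2$-derivatives and the gradient entry collapse to scalar multiples of a single vector $(\nabla\theta_1^{T},0)$. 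This forces $\mathrm{rank}\,M(0)=g_1+1$, and the standard determinantal bound yields $\eta(\tau(t);z)=O(t^{g_2})$ with generically non-vanishing leading coefficient; symmetrically, on the second component the vanishing order is $g_1$. Integrating $\log\|\eta_t\|$ over $\Theta_t$ and using the symplectic volumes
\begin{equation*}
\int_{\Theta_1\times A_2}\nu_0^{g-1}=\tbinom{g-1}{g_1-1}g_1!\,g_2!, \qquad \int_{A_1\times\Theta_2}\nu_0^{g-1}=\tbinom{g-1}{g_1}g_1!\,g_2!,
\end{equation*}
together with the identity $g_2\tbinom{g-1}{g_1-1}=g_1\tbinom{g-1}{g_1}=\tfrac{(g-1)!}{(g_1-1)!(g_2-1)!}$, one finds that the leading $\log|t|$-coefficient of $\Lambda(A_t,\Theta_t)$ equals $\tfrac{g_1!\,g_2!}{g!}\bigl(g_2\tbinom{g-1}{g_1-1}+g_1\tbinom{g-1}{g_1}\bigr)=\tfrac{2g_1g_2}{g}$.

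Feeding $H(A_t,\Theta_t)=O(1)$ and $\Lambda(A_t,\Theta_t)=\tfrac{2g_1g_2}{g}\log|t|+O(1)$ into $\delta=2(g-7)H-2\Lambda-4g\log 2\pi$ and $\varphi=(g+5)H-\Lambda+2g\log 2\pi$ at once gives the claimed $\delta$- and $\varphi$-asymptotics.

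The main obstacle is controlling the $O(1)$ remainder of $\Lambda$ near the singular locus $\Theta_1\times\Theta_2$: there $\Theta_t$ is a smoothing of a normal crossing of the form $\theta_1(z_1)\theta_2(z_2)=ct$, and $\log\|\eta_t\|$ can be more singular than the bulk estimate $g_j\log|t|$ would suggest. The fix is to pass to local coordinates transverse to the nodal directions (for instance, taking $w:=\theta_2$ as one of them after a suitable resolution) and verify that the tubular neighborhood of $\Theta_1\times\Theta_2$ inside $\Theta_t$ has measure small enough (of order $O(t\log|t|)$) that the singular contribution of $\log\|\eta_t\|$ still integrates to $O(1)$. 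A parallel uniform integrability argument is what makes the dominated convergence step for $H$ rigorous.
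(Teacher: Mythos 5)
Your proposal follows the paper's proof in all essentials: factorize $\theta$ and $\nu$ at $t=0$ to get $H(A_0,\Theta_0)=H(A_1,\Theta_1)+H(A_2,\Theta_2)$, determine the vanishing order of $\eta$ along each component of $\Theta_0$, compute the two symplectic volumes (which agree with the paper's expansion $\tfrac{\nu_0^{g-1}}{g!}=\tfrac{g_1}{g\,g_1!g_2!}\nu_1^{g_1-1}\nu_2^{g_2}+\tfrac{g_2}{g\,g_1!g_2!}\nu_1^{g_1}\nu_2^{g_2-1}$), and feed the resulting $H$ and $\Lambda$ asymptotics into the formulas from Theorem~\ref{thmextension}. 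The one place where the route genuinely differs is how you obtain the $O(t^{g_2})$ vanishing of $\eta$: you observe that at $t=0$ the last $g_2+1$ rows of the $(g+1)\times(g+1)$ matrix defining $\eta$ are all scalar multiples of $(\nabla\theta_1,0)$, so $\mathrm{rank}\,M(0)\le g_1+1$ and the determinant vanishes to order at least $g_2$. The paper instead Taylor-expands the entries $\widetilde{\theta}_{jk}$, classifies permutations in the Leibniz formula by $m(\sigma)=\#\{j\le g_1:\sigma(j)>g_1\}$, shows the naive vanishing order $g_2+1-m(\sigma)$ is improved by a pairwise cancellation for $m(\sigma)\ge 2$, and argues inductively. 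Your rank argument reaches the same lower bound more directly and is arguably cleaner; the paper's permutation analysis, on the other hand, is what it uses to pin down the leading order and to observe that only the $m(\sigma)=1$ terms survive, so you should be aware that asserting ``generically non-vanishing leading coefficient'' still requires the ingredient the paper supplies, namely that the off-diagonal block of $\partial_t\tau(0)$ is nonzero because $\tau$ is an embedding and $(A_t,\Theta_t)$ is indecomposable for $t\neq 0$. Finally, you correctly flag the uniform-integrability issue near $\Theta_1\times\Theta_2$ — the paper's pointwise estimate ``$\log\|\eta\|=g_j\log|t|+O(1)$ for a dense set of $(a,b)$'' is integrated without an explicit uniform bound, and your proposed fix via measure control of the vanishing cycle region is a reasonable way to make this rigorous, though it would need to be carried out carefully; the parallel remark about dominated convergence for $H$ is likewise a step the paper leaves implicit.
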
 
\begin{proof}
For $t\in D$ and $j\in\lbrace1, 2\rbrace$ we denote by $\nu_t=\nu_{(A_t,\Theta_t)}$ and $\nu_j=\nu_{(A_j,\Theta_j)}$ the canonical $(1,1)$ form of $(A_t,\Theta_t)$ respectively $(A_j,\Theta_j)$.
We may assume, that $\tau(0)$ is of the form
$$\tau(0)=\begin{pmatrix} \Omega_1 & 0\\
0& \Omega_2 \end{pmatrix},$$
where $\Omega_j \in \mathds{H}_{g_j}$ is a matrix associated to $(A_j,\Theta_j)$.
We have $\nu_0=\nu_1+\nu_2$ and hence,
$$\tfrac{1}{g!}\nu_0^g=\tfrac{1}{g_1! g_2!}\nu_1^{g_1} \nu_2^{g_2}\quad\text{and}\quad \tfrac{1}{g!}\nu_0^{g-1}=\tfrac{g_1}{g \cdot g_1! g_2!}\nu_1^{g_1-1}\nu_2^{g_2}+\tfrac{g_2 }{g \cdot g_1! g_2!}\nu_1^{g_1}\nu_2^{g_2-1}.$$
Likewise, we obtain $\det(\Im\tau(0))=\det(\Im\Omega_1)\cdot \det(\Im\Omega_2)$.
Every $z\in A_t$ can be represented by $a+\tau(t)\cdot b$ for some real vectors $a,b\in [0,1]^g$. Fix arbitrary vectors $a,b\in [0,1]^g$ and write $z_t=a+\tau(t)\cdot b$.
We obtain for the function $\theta$
\begin{align*}
&\exp\left(-\pi\ltrans{(\Im z_t)}(\Im\tau(t))^{-1}(\Im z_t)\right)\cdot|\theta|(\tau(t);z_t)\\
=&\left|\sum_{n\in\Z^g}\exp\left(\pi i\ltrans{(n+b)}\tau(t)(n+b)+2\pi i \ltrans{n} a\right)\right|.
\end{align*}
In particular, we have $\|\theta\|(\tau(0);\begin{pmatrix} z_1\\z_2\end{pmatrix})=\|\theta\|(\Omega_1;z_1)\cdot \|\theta\|(\Omega_2;z_2)$, where $z_j\in \C^{g_j}$, and hence, $H(A_0,\Theta_0)=H(A_1,\Theta_1)+H(A_2,\Theta_2)$.

We also deduce, that $\Theta_0=\left(\Theta_1\times A_2\right)\cup\left(A_1\times \Theta_2\right)$.
If we set $n_{g+1}=\tfrac{1}{2\pi i}$, the function $\|\eta\|$ can be written by
\begin{align}\label{etadeterminante}
&\|\eta\|(\tau(t);z_t)\cdot\det(\Im\tau(t))^{-(g+5)/4}\\
=&\left|\det\left(4\pi^2\sum_{n\in \Z^g} n_j n_k\exp(\pi i \ltrans{(n+b)}\tau(t)(n+b)+2\pi i \ltrans{n} a)\right)_{j,k\le g+1}\right|,\nonumber
\end{align}
where $z_t=a+\tau(t)\cdot b\in\Theta_t$.
Write
$a=\begin{pmatrix}a_1 \\ a_2\end{pmatrix}$ and $b=\begin{pmatrix}b_1 \\ b_2\end{pmatrix},$
where $a_j$ and  $b_j$ are $g_j$-dimensional vectors.
Let $a+\tau(0)\cdot b$ represent an element in $\Theta_1\times A_{2}$.
Consider the expression
\begin{align*}
\widetilde{\theta}_{jk}(\tau(t);a,b)=\sum_{n\in \Z^g} n_j n_k\exp(\pi i \ltrans{(n+b)}\tau(t)(n+b)+2\pi i \ltrans{n} a).
\end{align*}
If $j\le g_1$ or $k\le g_1$, then $\widetilde{\theta}_{jk}(\tau(0);a,b)$ is non-zero for a dense subset of pairs $(a,b)$ in 
$$M=\left\lbrace (a,b)\in\left[0,1\right]^g~|~a+\tau(0)\cdot b\in \Theta_1\times A_2\right\rbrace.$$
Otherwise, it is zero, since we can write it as a product containing the factor
\begin{align}\label{thetal}
\sum_{n\in \Z^{g_1}} \exp(\pi i \ltrans{(n+b_1)}\Omega_1(n+b_1)+2\pi i \ltrans{n} a_1),\end{align}
which vanishes by $(a_1+\Omega_1\cdot b_1)\in \Theta_1$. But the expression
$$\lim_{t\to 0}\frac{\widetilde{\theta}_{jk}(\tau(t);a,b)}{t},$$
is non-zero for a dense subset of pairs $(a,b)$ in $M$. To check this, one uses the chain rule to obtain a linear combination of partial derivatives of (\ref{thetal}) with coefficients $\tfrac{\partial\tau_{pq}(t)}{\partial t}|_{t=0}$ with $p\le g_1$ and $q>g_2$, which do not vanish all by the definition of $\tau$.

We have to compute the order of vanishing at $t=0$ for the summands in the expansion of the determinant in (\ref{etadeterminante}). Let $\sigma\in\mathrm{Sym}(g+1)$ be any permutation with $\sigma(g+1)\neq g+1$. Denote by $m(\sigma)$ the cardinality of $\lbrace j\le g_1~|~\sigma(j)>g_1\rbrace$. The observations above shows, that
$$\prod_{j=1}^{g+1}\widetilde{\theta}_{j,\sigma(j)}(\tau(t);a,b)$$
vanishes of order $g_2+1-m(\sigma)$ at $t=0$ for a dense subset of pairs $(a,b)$ in $M$.
But for different $j_1,j_2\le g_1$ and different $k_1,k_2> g_1$ the function $\widetilde{\theta}_{j_l k_m}(\tau(t);a,b)$ splits into a product of two factors, such that the expression
$$\widetilde{\theta}_{j_1k_1}(\tau(t);a,b)\cdot\widetilde{\theta}_{j_2k_2}(\tau(t);a,b)- \widetilde{\theta}_{j_1k_2}(\tau(t);a,b)\cdot\widetilde{\theta}_{j_2k_1}(\tau(t);a,b)$$
vanishes at $t=0$. If $\sigma$ satisfies $m(\sigma)\ge 2$, $\sigma(j_1)=k_1$ and $\sigma(j_2)=k_2$, then we construct $\sigma'\in\mathrm{Sym}(g+1)$ by setting $\sigma'(j_1)=\sigma(j_2)$, $\sigma'(j_2)=\sigma(j_1)$ and $\sigma'(j)=\sigma(j)$ for $j\notin\lbrace j_1,j_2\rbrace$. We obtain that
$$\prod_{j=1}^{g+1}\widetilde{\theta}_{j,\sigma(j)}(\tau(t);a,b)-\prod_{j=1}^{g+1}\widetilde{\theta}_{j,\sigma'(j)}(\tau(t);a,b)$$
vanishes of order at least $g_2+2-m(\sigma)$.
Inductively, we deduce that the determinant in (\ref{etadeterminante}) vanishes of order at least $g_2$. Since there is no such cancellation for permutations with $m(\sigma)=1$, we conclude that 
$$\log\|\eta\|(\tau(t);a+\tau(t)\cdot b)=g_2  \log |t|+O(1)$$
for a dense subset of pairs $(a,b)$ in $M$.
We can argue analogously for $a$, $b$ satisfying $(a+\tau(0)\cdot b)\in A_1\times \Theta_2$.
Then we obtain for the invariant $\Lambda(A_t,\Theta_t)$:
\begin{align*}
\Lambda(A_t,\Theta_t)=&\int_{\Theta_1\times A_2}\left(g_2  \log |t|+O(1)\right)\tfrac{g_1}{g \cdot g_1! g_2!}\nu_1^{g_1-1}\nu_2^{g_2}\\
&+\int_{A_1\times \Theta_2}\left(g_1\log |t|+O(1)\right)\tfrac{g_2 }{g \cdot g_1! g_2!}\nu_1^{g_1}\nu_2^{g_2-1}\\
=&\tfrac{2g_1 g_2}{g}\log |t|+O(1).
\end{align*}
Now the formulas for $\delta$ and $\varphi$ in the proposition follow by Theorem \ref{thmextension}.
 \end{proof}

\vspace{2cm}
Robert Wilms\\
Institut für Mathematik\\
Johannes Gutenberg-Universität Mainz\\
Staudingerweg 9\\
55128 Mainz\\
Germany\\
E-mail: rowilms@uni-mainz.de
\end{document}